\newcommand{\bvarphi}{\boldsymbol{\varphi}}
\newcommand{\bpsi}{\boldsymbol{\psi}}
\newcommand{\brho}{\boldsymbol{\rho}}
\newcommand{\bsigma}{\boldsymbol{\sigma}}
\newcommand{\bu}{\mathbf{u}}
\newcommand{\bv}{\mathbf{v}}
\newcommand{\bg}{\mathbf{g}}
\def\Re{\mathrm{Re}\,}
\def\bs{\boldsymbol }
\def\mi{\mathrm{i}}
\def\dt{{\Delta t}}
\def\half{\tfrac12}
\begin{document}
%\title{Stable FEM \& BEM \& leapfrog \& convolution quadrature  coupling for the wave equation}
\title{Stable numerical coupling of exterior and interior problems for the wave equation}
\author{Lehel Banjai \and Christian Lubich \and Francisco-Javier Sayas}
\institute{Lehel Banjai \at School of Mathematical \& Computer Sciences, Heriot-Watt University, EH14 4AS Edinburgh, UK, \email{l.banjai@hw.ac.uk} 
\and 
Christian Lubich \at Mathematisches Institut, Universit\"at T\"ubingen, Auf der Morgenstelle 10, D-72076 T\"ubingen, Germany \email{lubich@na.uni-tuebingen.de}
\and
Francisco-Javier Sayas \at Department of Mathematical Sciences, University of Delaware, Newark DE, 19716 USA \email{fjsayas@udel.edu}}
\date{Received: date / Accepted: date}
\maketitle

\abstract{The acoustic wave equation on the whole three-dimens\-ional space is considered with initial data and inhomogeneity having support in a bounded domain, which need not be convex. We propose and study a numerical method that approximates the solution using computations only in the interior domain and on its boundary. The transmission conditions between the interior and exterior domain are imposed by a time-dependent boundary integral equation coupled to the wave equation in the interior domain. We give a full discretization by finite elements and leapfrog time-stepping in the interior, and by boundary elements and convolution quadrature on the boundary. The direct coupling becomes stable on adding a stabilization term on the boundary. The derivation of stability estimates is based on a strong positivity property of the Calderon boundary operators for the Helmholtz and wave equations and uses energy estimates both in time and frequency domain. The stability estimates together with bounds of the consistency error yield optimal-order error bounds of the full discretization.}

\keywords{transparent boundary conditions, Calderon operator, finite elements, boundary elements, leapfrog method, convolution quadrature}

\section{Introduction}
Boundary conditions that yield the restriction of the solution to the whole-space equation on a bounded domain are known as transparent boundary conditions. For the three-dimensional wave equation they are nonlocal in space and time.
In the last decades, a vast literature on approximating transparent boundary conditions has developed. There are fast algorithms for implementing the exact, nonlocal boundary conditions in special domains such as balls (e.g., Grote \& Keller \cite{GroK}, Hagstrom \cite{Hag}, Alpert, Greengard \& Hagstrom \cite{AlpGH}, Lubich \& Sch\"adle \cite{LubS}), there are local absorbing boundary conditions (e.g., Engquist \& Majda \cite{EngM},  Hagstrom, Mar-Or \& Givoli \cite{HagMG}), there are methods based on the pole condition (Ruprecht, Sch\"adle, Schmidt \& Zschiedrich \cite{RupSSZ}, Gander \& Sch\"adle \cite{GanS}), and -- perhaps most widely used -- there are perfectly matched layers (Berenger \cite{Ber} and countless papers thereafter) that implement approximate transparent boundary conditions. None of the local methods works, however, on non-convex domains where waves may leave and re-enter the domain. While the computational domain can in principle be enlarged to become convex or even a ball, this may require the discretization of a substantially larger domain than the domain of physical interest (for example, in the case of a scaffolding-like structure).

It is the objective of the present work to present a stable and convergent fully discrete algorithm that couples a standard discretization in the interior domain (by finite elements with explicit leapfrog time-stepping) with a direct discretization of the boundary integral terms (by boundary elements and convolution quadrature), without any requirement of convexity of the domain. The solution in the exterior domain can then be evaluated at specific points of interest by evaluating boundary integrals, which are again discretized by (the same) boundary elements and convolution quadrature.

This paper is thus related to work on boundary integral equations for the wave equation, which have attracted considerable interest in recent years. Basic analytical theory is provided by Bamberger \& Ha Duong \cite{BamH}, Lubich \cite{Lub94}, and Laliena \& Sayas \cite{LalS}. The standard discretization in space is by boundary elements (in their Galerkin or collocation variants). Two classes of discretizations in time are known to yield guaranteed stability: the space-time Galerkin approach (Ha Duong \cite{HaD}, Ha Duong,  Ludwig \& Terrasse \cite{HaDLT}) and convolution quadrature (Lubich \cite{Lub94} and more recently
Hackbusch, Kress \& Sauter \cite{HacKS}, Banjai \& Sauter \cite{BanS}, Banjai \cite{Ban10}, Banjai, Lubich \& Melenk \cite{BanLM}, Chappell \cite{Cha}, Chen, Monk, Wang \& Weile \cite{CheMWW}, Monegato, Scuderi \& Stani\'c \cite{MonSS}). Here we use convolution quadrature for time discretization of the boundary integrals.

To our knowledge, the only work, containing analysis, that numerically couples boundary integral operators with the wave equation in the interior domain to implement transparent boundary conditions, is the recent paper by 
Abboud, Joly, Rodr{\'{\i}}\-guez \& Terrasse \cite{AbbJRT}. They use a first-order weak formulation of the wave equation in the interior (that we shall also adopt), which is discretized by discontinuous finite elements in space and the explicit midpoint rule in time. Their discretization of the boundary integral operators follows the space-time Galerkin framework. On the theoretical side, they show partial stability (excluding the effect of boundary perturbations), which is, however, not sufficient to obtain convergent error bounds. The partial stability result is based on a non-negativity property of the Calderon operator for the wave equation, which is also established in \cite{AbbJRT}. We refer here also to an early, purely numerical, work by Jiao, Li, Michielssen \& Min \cite{JiaLMJ}.

While our approach in this paper is clearly influenced by \cite{AbbJRT}, we choose different numerical methods and use different analytical tools to study them, and we obtain strong stability results that enable us to prove convergence and error bounds for the full discretization.
As a key analytical result, we show a strong positivity (or coercitivity) property of the Calderon operator, which we prove first for the Helmholtz equation (that is, the Laplace transformed wave equation) and then transfer it to the wave equation via an operator-valued version of the classical Herglotz theorem. The required extensions of this theorem are formulated in the preparatory Section~2, both in a time-discrete and time-continuous setting. We also show that convolution quadrature time discretization inherits the positivity property from the time-continuous to the time-discrete setting. In Section 3 we study the Calderon operator of the Helmholtz equation, showing the positivitiy property that we transfer to the wave equation in Section 4. There we also describe the weak first-order formulation of the coupled problem that we adopt from \cite{AbbJRT}.

In Section 5 we describe the discretization that we propose and study. Space discretization is done by standard finite elements in the interior domain and by boundary elements. Time discretization is by standard leapfrog time stepping in the interior, and by convolution quadrature on the boundary. The coupling is stabilized by adding an extra term to the naive coupling of the methods. The fully discrete method remains explicit in the interior and is implicit only in the boundary variables, for which a linear system with the same positive definite matrix is solved in each time step.

In Section 6 we study the stability of the spatial semi-discretization. The strong positivity property of the Calderon operator, which is inherited by the Galerkin boundary element space discretization, is a key aspect. We use energy estimates both in the time-dependent equations and in the Laplace-transformed (frequency-domain) equations. Combining our stability estimates with bounds of the consistency error then allows us to obtain optimal-order error bounds of the semi-discretization in Section 7.

In Sections 8 and 9 we carry out an analogous, but technically more demanding programme for the full discretization. We make essential use of the fact that the strong positivity property is preserved under convolution quadrature time discretization. Our final result, Theorem 9.1, yields an asymptotically optimal $O(h+\dt^2)$ error bound in the natural norms for linear finite elements and naturally mixed piecewise linear / piecewise constant boundary elements, under the usual CFL condition for the leapfrog method and for the convolution quadrature based on the second-order backward difference formula. The spatial order can be increased with finite elements and boundary elements of higher degree.

\section{Preparation: Variants of the Herglotz theorem}

A key ingredient of the analysis of both the continuous and discretized wave equation is the positivity of a boundary integral operator and its discretization. This positivity resides on an operator-valued variant of the classical Herglotz theorem \cite{Her}, which states that an analytic function has positive real part on the unit disc if and only if convolution with its coefficient sequence is a positive semidefinite operation.

\subsection{A time-discrete operator-valued Herglotz theorem}

Let $V$ be a complex Hilbert space with dual $V'$, with the anti-duality  denoted by $\langle \cdot, \cdot \rangle$.  Let $B(\zeta): V \rightarrow V'$ and $R(\zeta) :  V \rightarrow V$ be  analytic families of bounded linear operators for $|\zeta| \leq \rho$.  We assume the uniform bounds
\begin{equation}
  \label{eq:bound_B_zeta}
  \|B(\zeta)\|_{V' \leftarrow V} \leq M, \quad \|R(\zeta)\|_{V \leftarrow V} \leq M,  \qquad |\zeta| \leq \rho,
\end{equation}
and expand $B(\zeta)$ and $R(\zeta)$ as
\[
B(\zeta) = \sum_{n = 0}^\infty B_n \zeta^n, \qquad
R(\zeta) = \sum_{n = 0}^\infty R_n \zeta^n.
\]
\begin{lemma}\label{lemma:disc_Herglotz}
In the above situation the following statements are equivalent:
\begin{enumerate}
\item 
$
\displaystyle
\Re \langle w, B(\zeta) w\rangle \geq \gamma \| R(\zeta) w\|^2, \qquad  \forall w \in V, |\zeta| \leq \rho.
$
\item $
\displaystyle
 \sum_{n = 0}^\infty \rho^{2n} \Re \left\langle w_n, \sum_{j = 0}^n B_{n-j}w_j\right\rangle \geq \gamma \sum_{n = 0}^\infty \rho^{2n} \left\|\sum_{j = 0}^n R_{n-j}w_j\right\|^2
$
holds for any finite sequence $w_n \in V$.
\end{enumerate}
\end{lemma}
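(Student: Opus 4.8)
The plan is to reduce the equivalence to a single Parseval-type identity on the circle $|\zeta|=\rho$, so that the discrete convolution sums in statement (2) turn into circle integrals of the frequency-domain quantities in statement (1). For a finite sequence $(w_n)\subset V$ I would introduce the $V$-valued polynomial $w(\zeta)=\sum_n w_n\zeta^n$ and observe that the inner sums $\sum_{j=0}^n B_{n-j}w_j$ and $\sum_{j=0}^n R_{n-j}w_j$ are exactly the Taylor coefficients of the products $B(\zeta)w(\zeta)$ and $R(\zeta)w(\zeta)$. By the uniform bounds \eqref{eq:bound_B_zeta} these are analytic on a neighbourhood of the closed disc, so their power series converge uniformly on $|\zeta|=\rho$. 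Writing $\zeta=\rho e^{\mi\theta}$, expanding both factors, and using $\tfrac1{2\pi}\int_0^{2\pi}e^{\mi(n-m)\theta}\,d\theta=\delta_{mn}$ together with the conjugate-linearity of the anti-duality in its $V$-argument, I obtain
\begin{align*}
\frac1{2\pi}\int_0^{2\pi}\Re\big\langle w(\rho e^{\mi\theta}),B(\rho e^{\mi\theta})w(\rho e^{\mi\theta})\big\rangle\,d\theta &= \sum_{n}\rho^{2n}\,\Re\Big\langle w_n,\sum_{j=0}^nB_{n-j}w_j\Big\rangle,\\
\frac1{2\pi}\int_0^{2\pi}\big\|R(\rho e^{\mi\theta})w(\rho e^{\mi\theta})\big\|^2\,d\theta &= \sum_{n}\rho^{2n}\,\Big\|\sum_{j=0}^nR_{n-j}w_j\Big\|^2,
\end{align*}
the second being a genuine Parseval identity that also certifies convergence of the series on the right. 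Thus statement (2) is precisely the assertion that, for every such polynomial $w(\cdot)$, the circle integral of $\Re\langle w,B(\cdot)w\rangle-\gamma\|R(\cdot)w\|^2$ is non-negative.

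The direction (1)$\Rightarrow$(2) is then immediate: I apply the pointwise inequality (1) with $\zeta=\rho e^{\mi\theta}$ to the vector $w(\rho e^{\mi\theta})$, integrate over $\theta$, and insert the two identities above.

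For the converse I would first establish (1) on the boundary circle and then propagate it inward. Restricting to scalar test sequences $w_n=a_n w$ for a fixed $w\in V$ collapses the integral inequality coming from (2) to
\[
\frac1{2\pi}\int_0^{2\pi}|a(\rho e^{\mi\theta})|^2\,g_w(\theta)\,d\theta\ge0,\qquad g_w(\theta):=\Re\big\langle w,B(\rho e^{\mi\theta})w\big\rangle-\gamma\big\|R(\rho e^{\mi\theta})w\big\|^2,
\]
valid for every scalar polynomial $a$, where $g_w$ is continuous and, by \eqref{eq:bound_B_zeta}, bounded. Choosing the coefficients $a_n$ so that $|a(\rho e^{\mi\theta})|^2$ is the Fej\'er kernel centred at an arbitrary $\theta_0$ produces a non-negative approximate identity, and letting its degree tend to infinity forces $g_w(\theta_0)\ge0$; hence (1) holds for all $w$ on $|\zeta|=\rho$. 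To pass to the interior I would use that, for fixed $w$, the scalar map $\zeta\mapsto\langle w,B(\zeta)w\rangle$ is holomorphic and $\zeta\mapsto R(\zeta)w$ is $V$-valued holomorphic, so $\Re\langle w,B(\zeta)w\rangle$ is harmonic while $\|R(\zeta)w\|^2$ is subharmonic; consequently the defect $\Re\langle w,B(\zeta)w\rangle-\gamma\|R(\zeta)w\|^2$ is superharmonic on $|\zeta|<\rho$ and, being continuous up to the boundary, attains its minimum on $|\zeta|=\rho$, where we have just shown it to be non-negative. This yields (1) on the whole closed disc.

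I expect the main obstacle to be the converse direction, and specifically the two approximation arguments on which it rests: verifying that the functions $|a(\rho e^{\mi\theta})|^2$ realise a concentrating approximate identity on the circle (the explicit Fej\'er-kernel factorisation makes this clean), and correctly handling the conjugate-linear anti-duality together with the vector-valued Parseval and subharmonicity bookkeeping, which is where the uniform bound $M$ enters to guarantee convergence and the continuity and boundedness of $g_w$. The forward direction and the underlying algebraic identities are routine once the generating-function viewpoint is in place.
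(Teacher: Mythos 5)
Your proposal is correct and follows essentially the same route as the paper's proof: the forward direction via a Parseval identity on the circle $|\zeta|=\rho$ (your generating-polynomial formulation $w(\zeta)=\sum_n w_n\zeta^n$ is exactly the paper's $\widehat w(\theta)=\sum_n e^{\mi n\theta}\rho^n w_n$), and the converse via scalar test sequences whose squared modulus is a Fej\'er kernel, which is the paper's choice $\widehat w(\theta)=p_n(\theta-\theta_*)^{1/2}w_*$ in factorized form. The one genuine addition on your side is worth noting: the localization argument, as sketched in the paper, yields statement (1) only on the boundary circle $|\zeta|=\rho$, and you explicitly supply the missing extension to $|\zeta|<\rho$ by observing that $\Re\langle w,B(\zeta)w\rangle$ is harmonic and $\|R(\zeta)w\|^2$ is subharmonic, so the defect is superharmonic and its minimum is attained on the circle; this step (which implicitly uses $\gamma\geq 0$, as holds in all the paper's applications, and could equivalently be phrased via the Poisson integral and Jensen's inequality) makes rigorous a point the paper's proof leaves tacit.
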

\begin{proof}
  Let $\widehat B(\theta) = B(\rho e^{\mi \theta}), \widehat R(\theta) = R(\rho e^{\mi \theta})$ and for any finite sequence $(w_n)$ let $\widehat w(\theta) = \sum_{n = 0}^\infty e^{\mi n \theta}\rho^n w_n$.  Then by Parseval's formula we have
\[
\sum_{n = 0}^\infty  \langle \rho^n w_n , \sum_{j = 0}^n \rho^{n-j} B_{n-j} \rho^j w_j\rangle = \int_{-\pi}^\pi \langle \widehat w(\theta), \widehat B(\theta) \widehat w(\theta) \rangle d\theta
\]
and 
\[
\int_{-\pi}^\pi \|\widehat R(\theta)\widehat w(\theta)\|^2 d\theta = \sum_{n = 0}^\infty \rho^{2n} \left\|\sum_{j = 0}^n R_{n-j}w_j\right\|^2,
\]
which yields the implication $1. \implies 2.$  For the reverse direction one additionally  uses a sequence of non-negative approximate $\delta$-functions $p_n(\theta)$ (e.g., the Fej\'er sequence) and chooses $\widehat w(\theta)=p_n(\theta-\theta_*)^{1/2} w_*$ to localize the above integrals near an arbitrary $\theta_*$.
\qed \end{proof}

\subsection{A time-continuous operator-valued Herglotz theorem}\label{section:cont_herglotz}

Let $B(s): V \rightarrow V'$ and $R(s) : V \rightarrow V$ be analytic families of bounded linear operators for $\Re s \geq \sigma$.  We assume the uniform bounds
\begin{equation}
  \label{eq:bound_B_s}
  \|B(s)\|_{V' \leftarrow V} \leq M |s|^\mu, \quad \|R(s)\|_{V \leftarrow V} \leq M |s|^\mu, \qquad \Re s \geq \sigma.
\end{equation}
For integer $m > \mu+1$, we define the integral kernel 
\begin{equation}
  \label{eq:B_kernel_m}
  K_m(t) = \frac1{2\pi\mi} \int_{\sigma + \mi \mathbb{R}} e^{st} s^{-m} B(s) ds.
\end{equation}
For a function $w \in C^m([0,T], V)$ with $w(0) = w'(0)= \dots = w^{m-1}(0) = 0$,  we let 
\[
\bigl(B(\partial_t)w \bigr) (t) = \left(\frac{d}{dt}\right)^m \int_0^t K_m(t-\tau) w(\tau) d\tau.
\]
We note that $B(\partial_t)w$ is the distributional convolution of the inverse Laplace transform of $B(s)$ with $w$.
%Similarly we define the convolution $R(\partial_t) w$.
\begin{lemma}\label{lemma:cont_Herglotz}
  In the above situation the following statements are equivalent:
  \begin{enumerate}
  \item $\Re \langle w, B(s) w\rangle \geq \gamma \|R(s) w\|^2, \qquad \forall w \in V, \; \Re s \geq \sigma$.
\item $ \displaystyle \int_0^\infty e^{-2\sigma t} \Re \langle w(t) , B(\partial_t) w (t) \rangle dt \geq \gamma \int_0^\infty e^{-2\sigma t} \|R(\partial_t) w(t)\|^2 dt$, for all  $w \in C^m([0,\infty), V)$ with finite support, $w(0) = w'(0)= \dots = w^{m-1}(0) = 0$, and for all $t \geq 0$.
  \end{enumerate}
\end{lemma}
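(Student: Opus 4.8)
The plan is to mirror the proof of Lemma~\ref{lemma:disc_Herglotz}, replacing the Parseval identity on the circle $|\zeta|=\rho$ by the Plancherel identity on the line $\sigma+\mi\mathbb{R}$. The preparatory step is the Laplace-transform identity $\widehat{B(\partial_t)w}(s)=B(s)\widehat w(s)$ (and likewise $\widehat{R(\partial_t)w}(s)=R(s)\widehat w(s)$) for the Laplace transform $\widehat w(s)=\int_0^\infty e^{-st}w(t)\,dt$. This follows from the definition of $B(\partial_t)$: convolution with $K_m$ transforms into multiplication by $\widehat{K_m}(s)=s^{-m}B(s)$ by \eqref{eq:B_kernel_m}, and the $m$-fold differentiation transforms into multiplication by $s^m$, with no boundary terms because $w(0)=\dots=w^{m-1}(0)=0$ forces $K_m*w$ to vanish to order $m$ at $t=0$. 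For admissible $w$ (compact support, flat at the origin) the functions $e^{-\sigma t}w(t)$ and $e^{-\sigma t}(B(\partial_t)w)(t)$ lie in $L^2(0,\infty)$, so their Fourier transforms in $\omega$ are exactly $\widehat w(\sigma+\mi\omega)$ and $B(\sigma+\mi\omega)\widehat w(\sigma+\mi\omega)$, and analogously for $R$.

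For the implication \textbf{1.\ $\Rightarrow$ 2.}\ I would apply Plancherel to turn the two weighted time integrals into line integrals,
\[
\int_0^\infty e^{-2\sigma t}\langle w(t),B(\partial_t)w(t)\rangle\,dt=\frac1{2\pi}\int_{-\infty}^\infty\langle\widehat w(\sigma+\mi\omega),B(\sigma+\mi\omega)\widehat w(\sigma+\mi\omega)\rangle\,d\omega,
\]
together with the analogous identity carrying $\|R(\sigma+\mi\omega)\widehat w(\sigma+\mi\omega)\|^2$ on the right. Taking real parts and inserting, for each fixed $\omega$, the pointwise inequality of statement~1.\ at $s=\sigma+\mi\omega$ (a point of the admissible region $\Re s\ge\sigma$), then integrating over $\omega$, yields statement~2.\ The factor $1/(2\pi)$ is common to both sides and irrelevant.

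For the reverse implication \textbf{2.\ $\Rightarrow$ 1.}\ I would run Plancherel backwards to read statement~2.\ as the nonnegativity of $\int_{-\infty}^\infty\bigl(\Re\langle\widehat w,B\widehat w\rangle-\gamma\|R\widehat w\|^2\bigr)\,d\omega$ over all admissible transforms $\widehat w$, and then localize in frequency near an arbitrary $\omega_*$ by choosing $\widehat w(\sigma+\mi\omega)\approx p(\omega-\omega_*)^{1/2}v$ with a fixed $v\in V$ and a nonnegative approximate identity $p$ (the continuous analogue of the Fej\'er kernel used in Lemma~\ref{lemma:disc_Herglotz}); in the limit this extracts the pointwise inequality $\Re\langle v,B(\sigma+\mi\omega_*)v\rangle\ge\gamma\|R(\sigma+\mi\omega_*)v\|^2$ for every $v$ and every $\omega_*$, that is, statement~1.\ on the boundary line $\Re s=\sigma$. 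The remaining, and in my view principal, difficulty is to propagate this from the line into the open half-plane $\Re s>\sigma$. For fixed $v$, the map $s\mapsto\Re\langle v,B(s)v\rangle$ is harmonic (the real part of an analytic function) while $s\mapsto\|R(s)v\|^2$ is subharmonic (the squared norm of the holomorphic $V$-valued map $s\mapsto R(s)v$), so their difference is superharmonic, is $\ge0$ on $\Re s=\sigma$ by the previous step, and is therefore $\ge0$ throughout $\Re s>\sigma$ by the minimum principle -- but only once its behaviour as $|s|\to\infty$ has been controlled, which is exactly where the polynomial bounds \eqref{eq:bound_B_s} must enter through a Phragm\'en--Lindel\"of argument on the half-plane. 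Making this growth control rigorous, and, as a secondary point, ensuring the localizing family $\widehat w$ is realizable as the transform of a causal, compactly supported $w$ (a Paley--Wiener constraint that a smoothing and truncation arranges), is the step I expect to demand the most care.
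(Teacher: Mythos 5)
Your implication 1.\,$\Rightarrow$\,2.\ is exactly the paper's proof: the two Plancherel identities you write out are the entire displayed content of the paper's argument, and evaluating the pointwise inequality only at $s=\sigma+\mi\omega$ is indeed all that direction needs. Your frequency-localization step for the converse also matches the paper's ``similarly as above'' sketch (your Paley--Wiener caveat is legitimate but minor, handled as in the Fej\'er--Riesz factorization of the discrete case).

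The genuine gap is the step you flag yourself, and your proposed fix cannot work. On a half-plane the critical growth for Phragm\'en--Lindel\"of/minimum-principle arguments is \emph{linear}: $u(s)=-(\Re s-\sigma)$ is harmonic, vanishes identically on $\Re s=\sigma$, grows only like $|s|$ --- within the bounds \eqref{eq:bound_B_s} already for $\mu=1$, while the application has $\mu=2$ --- and yet is strictly negative in the open half-plane. Concretely, take $V=\mathbb C$, $B(s)=-(s-\sigma)$, $R\equiv0$: all standing hypotheses hold, statement 1 fails for every $\Re s>\sigma$, and nevertheless statement 2, read literally with upper limit $\infty$, is \emph{true}, since $B(\partial_t)w=-w'+\sigma w$ and, $w$ being flat at $0$ with finite support,
\[
\int_0^\infty e^{-2\sigma t}\,\Re \langle w,-w'+\sigma w\rangle\, dt
=\int_0^\infty \Bigl(-\tfrac12\bigl(e^{-2\sigma t}\|w(t)\|^2\bigr)'\Bigr) dt=0 .
\]
So under the literal $[0,\infty)$ reading the implication 2.\,$\Rightarrow$\,1.\ is false, and no refinement of the superharmonicity argument can repair it: the line $\Re s=\sigma$ simply does not determine the interior when only polynomial growth is assumed.

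The rescue lies in the dangling ``for all $t\ge0$'' in statement 2: the inequality is meant with integrals over $[0,t]$ for every $t$, which is the form the paper actually applies (cf.\ Lemma~\ref{lemma:calderon_time_pos} and Lemma~\ref{lemma:4.2}, where the integrals run only to $T$, with causality invoked for exactly this point). This reading excludes the counterexample, since there the finite-horizon integral equals $-\tfrac12 e^{-2\sigma t}\|w(t)\|^2<0$. With the finite-horizon family in hand, interior points are reached directly, with no boundary-to-interior propagation at all: fix $s_0$ with $\Re s_0>\sigma$, test with $w(t)=\chi(t)e^{s_0 t}v$ where $\chi$ is a smooth cutoff only at the left end, and apply the inequality on $[0,T]$. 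By causality the integrands on $[0,T]$ see only the pure exponential; since $|K_m(t)|\le Ce^{\sigma t}$ one has $B(\partial_t)w(t)=e^{s_0t}B(s_0)v+O(e^{\sigma t})$, both sides grow like $e^{2(\Re s_0-\sigma)T}$ with leading coefficients proportional to $\Re\langle v,B(s_0)v\rangle$ and $\gamma\|R(s_0)v\|^2$, and dividing by $e^{2(\Re s_0-\sigma)T}$ and letting $T\to\infty$ yields statement 1 at $s_0$. Note finally that the paper itself only ever uses the direction 1.\,$\Rightarrow$\,2.\ downstream, and that direction you prove exactly as the paper does.
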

\begin{proof}
  Similarly as above the result is obtained using Plancherel's formula, which here gives
\[
\int_0^\infty  \langle  e^{-\sigma t} w(t), e^{-\sigma t} B(\partial_t) w (t) \rangle dt
 = \int_{\sigma+\mi \mathbb{R}} \langle \mathscr{L} w (s), B(s)  \mathscr{L} w (s)\rangle ds,
\]
where $\mathscr{L} w$ denotes the Laplace transform of $w$, and
\[
\int_{\sigma+\mi \mathbb{R}} \|R(s) \mathscr{L} w (s)\|^2 ds
= \int_0^{\infty} e^{-2\sigma t} \|R(\partial_t) w (t)\|^2 dt.
\]
\qed \end{proof}

\subsection{Convolution quadrature and preserving the positivity}

Convolution quadrature based on an A-stable multistep method discretizes $B(\partial_t) w (t)$  by a discrete convolution 
\[
\left(B(\partial_t^{\Delta t}) w\right) (n \Delta t) = \sum_{j = 0}^n B_{n-j} w(j\Delta t). 
\]
Here the weights $B_n$ are defined as the coefficients of the power series 
\[
B\left(\frac{\delta(\zeta)}{\Delta t}\right) = \sum_{n = 0}^\infty B_n \zeta^n,
\]
where in this paper we choose  $\delta(\zeta)$ to be the generating function of the second order backward difference formula (BDF2):
\[
\delta(\zeta) = (1-\zeta) + \tfrac12(1-\zeta)^2.
\]
The method is of order 2, which can be formulated as 
\[
\delta(e^{-z}) = z + O(z^3)
\]
and it is strongly A-stable, which means that
\[
\Re \delta(\zeta) \geq  \alpha + O(\alpha^2), \qquad |\zeta| \leq e^{-\alpha},
\]
for small $\alpha$. It is known that
\begin{equation}\label{cq-conv}
B(\partial_t^{\Delta t}) w (t)-B(\partial_t) w(t) = O(\Delta t^2), \quad \text{ uniformly for } t = n\Delta t \leq T,
\end{equation}
for sufficiently smooth functions $w$ with sufficiently many vanishing derivatives at $t=0$, see \cite{Lub94} for details. Moreover the scheme preserves the positivity property of the continuous convolution.

\begin{lemma}\label{lemma:cq_Herglotz}
  In the situation of Lemma~\ref{lemma:cont_Herglotz} the condition 1.\ of that lemma implies, for $\sigma \Delta t >0$ small enough and with a $\rho = e^{-\sigma \Delta t}+O(\dt^2)$,
\[
\sum_{n = 0}^\infty \rho^{2n} \Re \langle w(n\Delta t), B(\partial_t^{\Delta t})w(n\Delta t)\rangle \geq \gamma \sum_{n = 0}^\infty \rho^{2n} \|R(\partial_t^{\Delta t})w(n\Delta t)\|^2,
\]
for any function $w: [0,\infty) \rightarrow V$ with finite support.
\end{lemma}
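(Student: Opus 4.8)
The plan is to reduce the statement to the time-discrete Herglotz theorem, Lemma~\ref{lemma:disc_Herglotz}, applied not to $B,R$ themselves but to the composed operator families
$\widetilde B(\zeta) = B(\delta(\zeta)/\dt)$ and $\widetilde R(\zeta) = R(\delta(\zeta)/\dt)$.
By the very definition of the convolution quadrature weights, $\widetilde B(\zeta) = \sum_{n} B_n \zeta^n$ and $\widetilde R(\zeta) = \sum_n R_n \zeta^n$ are the generating functions whose coefficients produce the discrete convolutions $(B(\partial_t^{\dt})w)(n\dt) = \sum_{j=0}^n B_{n-j}w(j\dt)$ and likewise for $R$. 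Hence, taking $w_n = w(n\dt)$ (a finite sequence, since $w$ has finite support), statement 2 of Lemma~\ref{lemma:disc_Herglotz} for $\widetilde B,\widetilde R$ is exactly the claimed inequality. It therefore suffices to verify, on a suitable disc $|\zeta|\le\rho$, the hypotheses of that lemma, namely analyticity, uniform boundedness, and its statement 1.

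The central step, and the main obstacle, is the calibration of $\rho$ so that the polynomial map $\zeta \mapsto s = \delta(\zeta)/\dt$ sends the closed disc $|\zeta|\le\rho$ into the half-plane $\Re s \ge \sigma$, i.e.\ so that $\Re\delta(\zeta) \ge \sigma\dt$ there. This is precisely where strong A-stability of BDF2 is used: from $\Re\delta(\zeta) \ge \alpha + O(\alpha^2)$ for $|\zeta|\le e^{-\alpha}$ and small $\alpha$, I solve $\alpha + O(\alpha^2) = \sigma\dt$ for $\alpha = \sigma\dt + O((\sigma\dt)^2)$ and set $\rho = e^{-\alpha}$. For $\sigma\dt>0$ small enough this gives $\rho = e^{-\sigma\dt} + O(\dt^2)$ and guarantees $\Re(\delta(\zeta)/\dt)\ge\sigma$ on the closed disc. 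The order-two defect in the A-stability bound is exactly what forces $\rho$ to deviate from the naive value $e^{-\sigma\dt}$ by an $O(\dt^2)$ correction; if one needs the image in the \emph{open} half-plane to invoke analyticity up to the boundary, one shrinks $\rho$ by an amount absorbed into this correction.

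With $\rho$ so chosen, the three hypotheses of Lemma~\ref{lemma:disc_Herglotz} follow directly. Analyticity of $\widetilde B$ and $\widetilde R$ on $|\zeta|\le\rho$ is inherited from the analyticity of $B$ and $R$ on $\Re s\ge\sigma$ composed with the entire map $\delta(\cdot)/\dt$. The uniform bounds follow from \eqref{eq:bound_B_s}: since $\delta$ is a polynomial, $|\delta(\zeta)|$ is bounded on the disc, so $\|\widetilde B(\zeta)\|_{V'\leftarrow V} \le M|\delta(\zeta)/\dt|^\mu \le M(C/\dt)^\mu =: \widetilde M$, and likewise for $\widetilde R$; the precise value of this ($\dt$-dependent) constant is irrelevant, as the coercivity constant $\gamma$ in Lemma~\ref{lemma:disc_Herglotz} is independent of $M$. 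Finally, statement 1 of that lemma is immediate from the half-plane positivity: for every $|\zeta|\le\rho$ the point $s=\delta(\zeta)/\dt$ satisfies $\Re s\ge\sigma$, so statement 1 of Lemma~\ref{lemma:cont_Herglotz} gives $\Re\langle w, \widetilde B(\zeta)w\rangle = \Re\langle w, B(s)w\rangle \ge \gamma\|R(s)w\|^2 = \gamma\|\widetilde R(\zeta)w\|^2$ for all $w\in V$. Applying Lemma~\ref{lemma:disc_Herglotz} then yields the assertion; everything beyond the calibration of $\rho$ is a routine transcription.
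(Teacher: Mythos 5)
Your proposal is correct and takes essentially the same route as the paper: the paper's proof likewise observes that $\Re\langle w, B(\delta(\zeta)/\dt)w\rangle \ge \gamma\,\|R(\delta(\zeta)/\dt)w\|^2$ for all $w\in V$ and $|\zeta|\le\rho$, and then invokes Lemma~\ref{lemma:disc_Herglotz} applied to the composed families, whose Taylor coefficients are by definition the convolution quadrature weights. You simply make explicit what the paper leaves implicit --- the calibration $\rho=e^{-\alpha}$ with $\alpha=\sigma\dt+O((\sigma\dt)^2)$ coming from strong A-stability, and the (now $\dt$-dependent, but harmless, since $\gamma$ does not depend on $M$) uniform bound required by that lemma.
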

\begin{proof}
  Under the above conditions we have
\[
\Re \left \langle w, \left(\sum_{n = 0}^\infty B_n \zeta^n \right) w \right\rangle = 
\Re \left\langle w, B\left(\frac{\delta(\zeta)}{\Delta t}\right)w \right \rangle
\geq \gamma \left\|R\left(\frac{\delta(\zeta)}{\Delta t}\right)w\right\|^2, 
\]
for all $w \in V$ and $|\zeta| \leq \rho$. The result then follows from Lemma~\ref{lemma:disc_Herglotz}.
\qed \end{proof}

\section{Calderon operator for the Helmholtz equation}\label{section:cald_Helm}

With the Helmholtz equation
\begin{equation}
  \label{eq:helmholtz}  
  s^2 u -\Delta u = 0, \qquad  x \in \mathbb{R}^3\setminus \Gamma,
\end{equation}
and the boundary surface $\Gamma$ of a bounded Lipschitz domain $\Omega \subset \mathbb{R}^3$ we associate the usual boundary integral potentials \cite{LalS}: the single layer potential 
\[
S(s) \varphi (x) = \int_\Gamma \frac{e^{-s|x-y|}}{4\pi |x-y|} \varphi(y) d\Gamma_y,\qquad x \in \mathbb{R}^3 \setminus \Gamma,
\]
the double layer potential
\[
D(s) \varphi (x) = \int_\Gamma \left(\partial_{n_y}\frac{e^{-s|x-y|}}{4\pi |x-y|}\right) \varphi(y) d\Gamma_y, \qquad x \in \mathbb{R}^3 \setminus \Gamma, 
\]
where $\partial_{n_y}$ denotes the exterior normal derivative with respect to the variable $y$. The corresponding boundary integral operators are defined as
\begin{align}
V(s) \varphi (x) &= \int_\Gamma \frac{e^{-s|x-y|}}{4\pi |x-y|} \varphi(y) d\Gamma_y, &  x &\in \Gamma,\\
K(s) \varphi (x) &= \int_\Gamma \left(\partial_{n_y}\frac{e^{-s|x-y|}}{4\pi |x-y|}\right) \varphi(y) d\Gamma_y, & x &\in \Gamma,\\
K^T(s) \varphi (x) &= \partial_{n_x}\int_\Gamma \frac{e^{-s|x-y|}}{4\pi |x-y|} \varphi(y) d\Gamma_y, & x &\in \Gamma,\\
W(s) \varphi (x) &= -\partial_{n_x}\int_\Gamma \left(\partial_{n_y}\frac{e^{-s|x-y|}}{4\pi |x-y|}\right) \varphi(y) d\Gamma_y, & x &\in \Gamma.
\end{align}
The above boundary integral operators are bounded linear operators on the following spaces
\begin{align*}
  V(s) &: H^{-1/2}(\Gamma) \rightarrow H^{1/2}(\Gamma), & K(s) &: H^{1/2}(\Gamma) \rightarrow H^{1/2}(\Gamma),\\
  K^T(s) &: H^{-1/2}(\Gamma) \rightarrow H^{-1/2}(\Gamma), & W(s) &: H^{1/2}(\Gamma) \rightarrow H^{-1/2}(\Gamma).
\end{align*}
with the following bounds holding for all $\Re s \geq \sigma > 0$
\begin{align*}
  \|V(s)\|_{H^{1/2}(\Gamma) \leftarrow H^{-1/2}(\Gamma)} &\leq C(\sigma) |s|, 
  \\
  \|K(s)\|_{H^{1/2}(\Gamma) \leftarrow H^{1/2}(\Gamma)} &\leq C(\sigma) |s|^{3/2},\\
  \|K^T(s)\|_{H^{-1/2}(\Gamma) \leftarrow H^{-1/2}(\Gamma)} &\leq C(\sigma) |s|^{3/2}, 
  \\
  \|W(s)\|_{H^{-1/2}(\Gamma) \leftarrow H^{1/2}(\Gamma)} &\leq C(\sigma) |s|^2.
\end{align*}
For a proof of these facts see \cite{BamH,BamH2} and for a table with all these properties listed see \cite{LalS}. We note that $C(\sigma)$ depends polynomially on~$\sigma^{-1}$.

Let $\gamma^-$ and $\gamma^+$ denote the interior and exterior traces on the boundary $\Gamma$, whereas $\partial_n^-$  and $\partial_n^+$  the interior and exterior normal traces on  $\Gamma$. Further we will also denote by $\Omega^+ = \mathbb{R}^3 \setminus \overline{\Omega}$ the domain exterior to $\Omega$.  The relationship between the boundary integral potentials and operators is given by
\[
V(s) \varphi= \gamma^- S(s) \varphi = \gamma^+ S(s) \varphi, \qquad
K(s) \varphi = \{\{D(s)\varphi\}\}
\]
and
\[
K^T(s) \varphi= \{\{\partial_n S(s) \varphi\}\}, \qquad
W(s) \varphi = - \partial^-_n D(s)\varphi = -\partial^+_n D(s)\varphi,
\]
where $\{\{ \gamma u\}\} = \tfrac12 (\gamma^- u+\gamma^+ u)$ denotes the average of the jump accross the boundary.  

In terms of these operators the solution of the Helmholtz equations is expressed as
\[
u = sS(s)\varphi + D(s) \psi,
\]
where
\[
\varphi = [\tfrac1s \partial_n u] , \qquad \psi = -[\gamma u],
\]
and  $[\gamma u] = \gamma^- u - \gamma^+ u$, $[\partial_n u] = \partial^-_n u- \partial^+_n u$ denote the jumps in the boundary traces.
Next we define a Calderon operator, whose positivity will be crucial for the analysis:
\begin{equation}
  \label{eq:calderon}
B(s)   =
\begin{pmatrix}
  sV(s) & K(s) \\ -K^T(s) & \tfrac1s W(s)
\end{pmatrix}.
\end{equation}
 In the following we  denote the anti-duality  between $H^{-1/2}(\Gamma) \times H^{1/2}(\Gamma)$ and $H^{1/2}(\Gamma) \times H^{-1/2}(\Gamma)$ by $\langle \cdot, \cdot \rangle_\Gamma$.
% From the mapping properties of individual boundary integral operators we conclude that $B(s): H^{-1/2}(\Gamma) \times H^{1/2}(\Gamma) \rightarrow H^{1/2}(\Gamma) \times H^{-1/2}(\Gamma)$ bounded as
% \[
% \|B(s)\|_{H^{1/2}(\Gamma) \times H^{-1/2}(\Gamma) \leftarrow H^{-1/2}(\Gamma) \times H^{1/2}(\Gamma)} \leq C(\sigma) |s|^2,
% \]
% for $\Re s \geq \sigma > 0$.

\begin{lemma}\label{lemma:calderon_pos}
There exists $\beta > 0$ so that the Calderon operator \eqref{eq:calderon} satisfies
\[
\Re \left \langle
  \begin{pmatrix}
      \varphi \\ \psi
  \end{pmatrix}, B(s) 
  \begin{pmatrix}
      \varphi \\ \psi
  \end{pmatrix}
\right \rangle_\Gamma
 \geq \beta\, \min(1,|s|^2) \frac{\Re s}{|s|^2} \left(\|\varphi\|^2_{H^{-1/2}(\Gamma)} + \|\psi\|^2_{H^{1/2}(\Gamma)}\right)
\]
for  $\Re s > 0$ and for all $\varphi \in H^{-1/2}(\Gamma)$ and $\psi \in H^{1/2}(\Gamma)$.
\end{lemma}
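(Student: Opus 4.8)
The plan is to identify the quadratic form of $B(s)$ with a weighted energy of the Helmholtz solution generated by the densities, and then to bound that energy from below by trace inequalities. First I would associate with $(\varphi,\psi)$ the potential
\[
u = s\,S(s)\varphi + D(s)\psi,
\]
which solves \eqref{eq:helmholtz} in $\mathbb{R}^3\setminus\Gamma$ and, since $\Re s>0$, decays exponentially at infinity. From the jump relations of the layer potentials and the identities recalled above, its Cauchy data are $[\gamma u]=-\psi$, $[\partial_n u]=s\varphi$, $\{\{\gamma u\}\}=sV(s)\varphi+K(s)\psi$ and $\{\{\partial_n u\}\}=sK^T(s)\varphi-W(s)\psi$, so that the two components of $B(s)(\varphi,\psi)$ are exactly $\{\{\gamma u\}\}$ and $-\tfrac1s\{\{\partial_n u\}\}$; this is where the scalings $sV$ and $\tfrac1s W$ in \eqref{eq:calderon} are tailor-made.

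Next I would derive an energy identity. Inserting $\varphi=\tfrac1s[\partial_n u]$ and $\psi=-[\gamma u]$ turns the boundary pairing into $\bar s^{-1}\langle[\partial_n u],\{\{\gamma u\}\}\rangle+s^{-1}\langle[\gamma u],\{\{\partial_n u\}\}\rangle$. On the other hand, Green's first identity applied separately in $\Omega$ and in $\Omega^+$ (the contribution at infinity dropping out by the decay), together with the splitting $\overline{\gamma^-u}\,\partial_n^-u-\overline{\gamma^+u}\,\partial_n^+u=\overline{\{\{\gamma u\}\}}[\partial_n u]+\overline{[\gamma u]}\{\{\partial_n u\}\}$ and $\Delta u=s^2u$, gives $\langle\{\{\gamma u\}\},[\partial_n u]\rangle+\langle[\gamma u],\{\{\partial_n u\}\}\rangle=\int_{\mathbb{R}^3\setminus\Gamma}(|\nabla u|^2+s^2|u|^2)\,dx$. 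Taking real parts and using $\Re(\bar s^{-1}\bar z)=\Re(s^{-1}z)$ collapses both into the clean identity
\[
\Re\left\langle\begin{pmatrix}\varphi\\\psi\end{pmatrix}, B(s)\begin{pmatrix}\varphi\\\psi\end{pmatrix}\right\rangle_\Gamma=\frac{\Re s}{|s|^2}\,\|\nabla u\|_{L^2(\mathbb{R}^3\setminus\Gamma)}^2+\Re s\,\|u\|_{L^2(\mathbb{R}^3\setminus\Gamma)}^2=\frac{\Re s}{|s|^2}\,N(s)^2,
\]
where $N(s)^2:=\|\nabla u\|^2+|s|^2\|u\|^2$ over $\mathbb{R}^3\setminus\Gamma$; both terms on the right are non-negative for $\Re s>0$.

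It then remains to bound $N(s)$ from below by the densities. Since $\psi=-[\gamma u]$ and $\varphi=\tfrac1s[\partial_n u]$, I would estimate $\|\psi\|_{H^{1/2}(\Gamma)}$ by the trace theorem and $\|\varphi\|_{H^{-1/2}(\Gamma)}$ by the normal-trace bound obtained from Green's formula and $\Delta u=s^2u$, namely $|\langle\partial_n u,g\rangle|\le N(s)\,(\|\nabla Eg\|^2+|s|^2\|Eg\|^2)^{1/2}$ for a bounded extension $Eg$ of $g\in H^{1/2}(\Gamma)$. Carrying the powers of $|s|$ through the weighted norm yields, with a constant depending only on $\Gamma$,
\[
\|\varphi\|_{H^{-1/2}(\Gamma)}^2+\|\psi\|_{H^{1/2}(\Gamma)}^2\le\frac{C}{\min(1,|s|^2)}\,N(s)^2,
\]
and combining this with the energy identity gives the assertion with $\beta=1/C$.

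The hard part will be the bookkeeping in the energy identity, getting the conjugations and the $s$- versus $\bar s$-factors right so that the real part reduces to the two manifestly non-negative terms (precisely what the scalings in $B(s)$ achieve), together with the trace estimates carrying the sharp $\min(1,|s|^2)$ dependence with $s$-independent constants, which for the unbounded exterior domain $\Omega^+$ also requires the decay at infinity to justify both Green's identity and the localized trace inequalities.
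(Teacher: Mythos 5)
Your proposal is correct and follows essentially the same route as the paper's proof: identifying $B(s)(\varphi,\psi)^T$ with the averaged Cauchy data $\bigl(\{\{\gamma u\}\},-\tfrac1s\{\{\partial_n u\}\}\bigr)$ of the transmission solution $u=sS(s)\varphi+D(s)\psi$, applying Green's identity in $\Omega$ and $\Omega^+$ to obtain $\Re s\,\bigl(\|\tfrac1s\nabla u\|^2+\|u\|^2\bigr)$, and closing with the trace theorem for $\psi=-[\gamma u]$ and the normal-trace bound (via $\Delta u=s^2u$) for $\varphi=\tfrac1s[\partial_n u]$, which produces exactly the $\min(1,|s|^2)$ factor. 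Your additional remarks on the conjugation bookkeeping and the decay at infinity justifying Green's identity in the exterior domain fill in details the paper leaves implicit, but do not change the argument.
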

\begin{proof}
From the identities 
\[
\{\{\gamma u\}\} = sV(s) \varphi + K(s) \psi, \qquad 
\{\{\partial_n u\}\} = sK^T(s) \varphi - W(s) \psi, \qquad 
\]
it follows that
\begin{equation}
  \label{eq:Bs_eq}  
B(s)
\begin{pmatrix}
  \varphi \\ \psi
\end{pmatrix}
=
\begin{pmatrix}
  \{\{\gamma u\}\}\\
- \tfrac1s\{\{\partial_n u\}\}
\end{pmatrix}
.
\end{equation}
Hence, using Green's theorem and \eqref{eq:helmholtz},
\begin{align*}  
\Re \left \langle
  \begin{pmatrix}
      \varphi \\ \psi
  \end{pmatrix}, B(s) 
  \begin{pmatrix}
      \varphi \\ \psi
  \end{pmatrix}
\right \rangle_\Gamma
&= \Re \langle [\tfrac1s \partial_n u], \{\{\gamma u\}\}\rangle_\Gamma
+ \Re \langle \{\{\tfrac1s \partial_n u\}\}, [\gamma u]\rangle_\Gamma\\
&=\Re \langle \tfrac1s \partial_n^- u, \gamma^-u \rangle_\Gamma+\Re \langle -\tfrac1s \partial_n^+ u, \gamma^+u \rangle_\Gamma\\
&= \Re s\left( \|\tfrac1s \nabla u\|_{L_2(\mathbb{R}^3 \setminus \Gamma)}^2 + \|u\|_{L_2(\mathbb{R}^3 \setminus \Gamma)}^2\right)  \\
& \geq \beta\, \min(1,|s|^2) \frac{\Re s}{|s|^2} \left(\|\varphi\|^2_{H^{-1/2}(\Gamma)} + \|\psi\|^2_{H^{1/2}(\Gamma)}\right).
\end{align*}
The final inequality above is obtained using the trace inequalities as follows:
\begin{align*}
  \|\varphi\|^2_{H^{-1/2}(\Gamma)} & = \left\|[\tfrac1s \partial_n u]\right\|^2_{H^{-1/2}(\Gamma)} \leq 
C \left(\|\tfrac1s \nabla u\|^2_{L_2(\mathbb{R}^3\setminus \Gamma)^3}+\|\tfrac1s \Delta u\|^2_{L_2(\mathbb{R}^3\setminus \Gamma)}\right)\\
& = C \left(\|\tfrac1s \nabla  u\|^2_{L_2(\mathbb{R}^3\setminus \Gamma)^3}+\|s u\|^2_{L_2(\mathbb{R}^3\setminus \Gamma)}\right)\\
&\leq C |s|^2 \max(1,|s|^{-2}) \left(\|\tfrac1s \nabla  u\|^2_{L_2(\mathbb{R}^3\setminus \Gamma)^3}+\|  u\|^2_{L_2(\mathbb{R}^3\setminus \Gamma)}\right)
\end{align*}
and similarly for $\psi = -[\gamma u]$.
\qed \end{proof}

\section{Boundary integral formulation of the wave equation}

\subsection{Calderon operator for the wave equation}
Consider the wave equation in $\mathbb{R}^3$
\begin{equation}
  \label{eq:wave_allspace}  
\begin{aligned}
\partial_t^2 u -\Delta u &= \dot f & &\text{in } \mathbb{R}^3 \times [0,T],\\
u(x,0 ) = u_0, \quad \partial_tu(x,0 ) &= v_0, & &\text{in } \mathbb{R}^3.
\end{aligned}
\end{equation}
Let again $\Omega \subset \mathbb{R}^3$ be a bounded Lipschitz domain with boundary $\Gamma$ and assume that the supports of $u_0,v_0$, and $\dot f$ are contained in $\Omega$.

We can rewrite (\ref{eq:wave_allspace}) as a problem set on the interior domain
\begin{equation}
  \label{eq:wave_int}
\begin{aligned}
 \partial_t^2 u^- -\Delta u^- &= \dot f & &\text{in } \Omega \times [0,T],\\
u^-(x,0 ) = u_0, \quad \partial_tu^-(x,0 ) &= v_0, & &\text{in } \Omega,
\end{aligned}
\end{equation}
a problem set in the exterior
\begin{equation}
  \label{eq:wave_ext}
\begin{aligned}
\partial_t^2 u^+ -\Delta u^+ &= 0 & &\text{in } \Omega^+ \times [0,T],\\
u^+(x,0 ) = 0, \quad \partial_tu^+(x,0 ) &= 0, & &\text{in } \Omega^+,
\end{aligned}
\end{equation}
where $\Omega^+ = \mathbb{R}^3 \setminus \overline{\Omega}$, and transmission conditions coupling the two sets of equations
\begin{equation}
  \label{eq:trans_condn}
  \gamma^-u^-=\gamma^+u^+, \qquad \partial^-_n u^-=\partial^+_n u^+.
\end{equation}
The solution of (\ref{eq:wave_allspace}) is then given by $u = u^-$ in $\Omega$ and by $u = u^+$ in $\Omega^+$. 

With time convolution operators based on the boundary integral operators for the Helmholtz equation, the solution of the exterior equations can then be written as
\begin{equation}
  \label{eq:u_bie}
  u^+ = S(\partial_t) \partial_t\varphi+ D(\partial_t) \psi.
\end{equation}
The boundary densities are given by
\[
\varphi = -\partial_t^{-1} \partial^+_n u^+, \qquad \psi = \gamma^+ u^+
\]
and satisfy the equation
\[
B(\partial_t)
\begin{pmatrix}
  \varphi \\ \psi 
\end{pmatrix}
= \frac12
\begin{pmatrix}
  \gamma^- u^- \\ -\partial_t^{-1} \partial_n^- u^-
\end{pmatrix},
\]
where $B(s)$ is defined in \eqref{eq:calderon}, the notation $B(\partial_t)$ is explained in Section~\ref{section:cont_herglotz}, and we have used \eqref{eq:Bs_eq} and the fact that $\gamma^- u^+ = \partial_n^- u^+ = 0$.

\subsection{Positivity of the time-dependent Calderon operator}

Applying Lemma~\ref{lemma:cont_Herglotz} we have the positivity of the time-dependent Calderon operator $B(\partial_t)$:

\begin{lemma}\label{lemma:calderon_time_pos}
  With the constant $\beta > 0$ from Lemma~\ref{lemma:calderon_pos} we have that
\[
\begin{split}  
&\int_0^T e^{-2t/T} \left \langle
 \begin{pmatrix}
      \varphi(\cdot,t) \\ \psi(\cdot,t)
  \end{pmatrix}, B(\partial_t) 
  \begin{pmatrix}
      \varphi \\ \psi
  \end{pmatrix}(\cdot,t)
\right \rangle_\Gamma dt 
\\
&\geq 
\beta\,  c_T \int_0^T e^{-2t/T}\left(\|\partial_t^{-1}\varphi(\cdot,t)\|^2_{H^{-1/2}(\Gamma)} + \|\partial_t^{-1}\psi(\cdot,t)\|^2_{H^{1/2}(\Gamma)}\right) dt,
\end{split}
\]
for any  $T > 0$ and for
 all $\varphi \in C^4([0,T],H^{-1/2}(\Gamma))$ and all  $\psi \in C^3([0,T],H^{1/2}(\Gamma))$ with $\varphi(\cdot,0) = \partial_t \varphi(\cdot,0) = \cdots = \partial_t^3 \varphi(\cdot,0) =0$, $\psi(\cdot,0) = \partial_t \psi(\cdot,0) = \partial_t^2 \psi(\cdot,0)= 0$. Here, $c_T=\min(T^{-1},T^{-3})$.
\end{lemma}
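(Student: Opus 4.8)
The plan is to read this estimate off the frequency-domain positivity of Lemma~\ref{lemma:calderon_pos} through the time-continuous Herglotz theorem, Lemma~\ref{lemma:cont_Herglotz}, applied on the product space $V=H^{-1/2}(\Gamma)\times H^{1/2}(\Gamma)$ to $w=(\varphi,\psi)$. The decisive choice is to take $R(s)=s^{-1}\mathrm{Id}_V$, so that $\|R(s)w\|^2=|s|^{-2}\bigl(\|\varphi\|^2_{H^{-1/2}(\Gamma)}+\|\psi\|^2_{H^{1/2}(\Gamma)}\bigr)$ and $R(\partial_t)w=(\partial_t^{-1}\varphi,\partial_t^{-1}\psi)$, which is exactly the pair of norms appearing on the right-hand side of the claim.

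First I would absorb the $s$-dependent factor of Lemma~\ref{lemma:calderon_pos} into a constant. Setting $\sigma=1/T$, so that the weight $e^{-2t/T}$ matches $e^{-2\sigma t}$, I claim the elementary bound
\[
\min(1,|s|^2)\,\frac{\Re s}{|s|^2}\ \ge\ c_T\,|s|^{-2},\qquad \Re s\ge \sigma=\tfrac1T,\quad c_T=\min(T^{-1},T^{-3}),
\]
verified by splitting into $|s|\ge1$ and $|s|<1$ and using $|s|\ge\Re s\ge\sigma$: in the first case the factor equals $\Re s\,|s|^{-2}\ge\sigma|s|^{-2}\ge c_T|s|^{-2}$, and in the second it equals $\Re s\ge\sigma$, while $\Re s\,|s|^2\ge\sigma^3\ge c_T$. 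Together with Lemma~\ref{lemma:calderon_pos} this is precisely hypothesis~1 of Lemma~\ref{lemma:cont_Herglotz} with $\gamma=\beta\,c_T$ and the above $R(s)$.

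The point that needs care is the smoothness hypothesis, which is exactly what produces the asymmetric regularity demanded of $\varphi$ and $\psi$. As a map $V\to V'$ the operator $B(s)$ grows like $|s|^2$, but this rate is attained only in the $sV(s)$ block acting on $\varphi$; every block in which $\psi$ appears, namely $K(s)$, $K^T(s)$ and $\tfrac1s W(s)$, grows at most like $|s|^{3/2}$. Since the number of vanishing derivatives required of an argument is governed by the condition $m>\mu+1$ for the growth exponent $\mu$ of the block acting on it, I would define the convolution block-wise, letting each block carry its own kernel $K_m$ while transferring the positivity of the assembled form as a whole through the Plancherel identity underlying Lemma~\ref{lemma:cont_Herglotz}. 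The $sV(s)$ block then forces $m=4$ for $\varphi$, the blocks meeting $\psi$ only force $m=3$, and the cross terms built from $K(s)$ and $K^T(s)$ impose nothing stronger. This reproduces exactly the hypotheses $\varphi\in C^4$ with $\varphi,\dots,\partial_t^3\varphi$ vanishing at $t=0$ and $\psi\in C^3$ with $\psi,\dots,\partial_t^2\psi$ vanishing at $t=0$. I expect this block-by-block accounting of the growth, as opposed to a monolithic application that would overcharge $\psi$ with four vanishing derivatives, to be the main technical obstacle.

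Finally I would pass from the half-line estimate of Lemma~\ref{lemma:cont_Herglotz} to the interval $[0,T]$ by causality. For $w$ supported in $[0,T]$ the integrand $\langle w(t),B(\partial_t)w(t)\rangle_\Gamma$ vanishes for $t>T$, so the left-hand integral over $[0,\infty)$ already coincides with the one over $[0,T]$, whereas discarding the nonnegative tail $\int_T^\infty e^{-2\sigma t}\|R(\partial_t)w(t)\|^2\,dt$ on the right only reinforces the inequality; since both $[0,T]$-integrals depend on $w$ only through its restriction to $[0,T]$, again by causality of $B(\partial_t)$ and $\partial_t^{-1}$, a standard truncation argument then reduces the general admissible case to this one and yields the stated estimate with the constant $\beta\,c_T$.
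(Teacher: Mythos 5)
Your proposal is correct and follows essentially the same route as the paper's proof: apply Lemma~\ref{lemma:cont_Herglotz} to the positivity of Lemma~\ref{lemma:calderon_pos} with $\sigma = 1/T$, the choice $R(s) = s^{-1}\,\mathrm{Id}$, and the elementary bound $\min(1,|s|^2)\,\Re s \ge c_T$ for $\Re s \ge 1/T$, then restrict the integrals to $[0,T]$ by causality of $B(\partial_t)$ and $\partial_t^{-1}$. Your block-by-block accounting of the growth exponents (yielding the asymmetric $C^4$/$C^3$ hypotheses on $\varphi$ and $\psi$) and your observation that the left-hand integrand vanishes beyond the support of $w$ simply spell out what the paper compresses into its two closing sentences about the operator bounds and causality.
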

\begin{proof}
  The proof follows directly from Lemma~\ref{lemma:cont_Herglotz} and Lemma~\ref{lemma:calderon_pos}, where we use $\Re s\geq \sigma = 1/T$ and the lower bound $\min(1,|s|^2) \Re s \geq \min( T^{-1},T^{-3})$.  The smoothness requirements on $\varphi$ and $\psi$ result from the bounds on the boundary integral operators. The reason that the integrals extend only up to $T$, lies in the causality property that $B(\partial_t) w(T)$ depends only on  $w(t)$ for $ t \leq T$.
\qed \end{proof}

Similarly, Lemma~\ref{lemma:cq_Herglotz} implies the positivity of the convolution quadrature approximation $B(\partial^{\Delta t}_t)$.  This result will be needed later in the paper.

\begin{lemma}\label{lemma:4.2}
Let $E:[0,\infty)\to [0,\infty)$, $S:\mathbb R\to \mathbb R$, $\varphi \in C^4([0,T],H^{-1/2}(\Gamma))$, $\psi \in C^3([0,T],H^{1/2}(\Gamma))$, with $\varphi(\cdot,0) = \partial_t \varphi(\cdot,0) = \cdots = \partial_t^3 \varphi(\cdot,0) =0$, $\psi(\cdot,0) = \partial_t \psi(\cdot,0) = \partial_t^2 \psi(\cdot,0)= 0$. If
\begin{equation}\label{eq:lemma42}
\dot E+\left\langle\begin{pmatrix} \varphi \\ \psi\end{pmatrix}, B(\partial_t) \begin{pmatrix} \varphi \\ \psi\end{pmatrix}\right\rangle_\Gamma = S \qquad \mbox{in $[0,T]$},
\end{equation}
then
\begin{eqnarray*}
& & E(T)+\beta c_T \int_0^T \left( \| \partial_t^{-1} \varphi(\cdot,t)\|_{H^{1/2}(\Gamma)}^2+\| \partial_t^{-1} \psi(\cdot,t)\|_{H^{-1/2}(\Gamma)}^2\right) dt\\
& & \hspace{5cm} \le e^2 E(0)+\int_0^T e^{2(1-t/T)} S(t) dt,
\end{eqnarray*}
where $c_T=\min\{T^{-1},T^{-3}\}$.
\end{lemma}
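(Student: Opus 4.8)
The plan is to test the energy identity \eqref{eq:lemma42} with the exponential weight $e^{-2t/T}$, integrate over $[0,T]$, and then invoke the time-domain positivity of the Calderon operator from Lemma~\ref{lemma:calderon_time_pos}. The weight is chosen to match the abscissa $\sigma=1/T$ underlying that lemma, so that the Calderon term, once integrated against $e^{-2t/T}$, is controllable from below by the dissipation integral. The key conceptual point is that $B(\partial_t)$ is \emph{not} pointwise positive in time, so a pointwise energy estimate is unavailable; one is forced to pass to the weighted, time-integrated identity, where Lemma~\ref{lemma:calderon_time_pos} does supply coercivity.

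First I would multiply \eqref{eq:lemma42} by $e^{-2t/T}$ and integrate from $0$ to $T$, so that
\[
\int_0^T e^{-2t/T}\dot E(t)\,dt + \int_0^T e^{-2t/T}\left\langle\begin{pmatrix}\varphi\\\psi\end{pmatrix}, B(\partial_t)\begin{pmatrix}\varphi\\\psi\end{pmatrix}\right\rangle_\Gamma dt = \int_0^T e^{-2t/T} S(t)\,dt.
\]
For the first integral I would integrate by parts, using $\frac{d}{dt}e^{-2t/T}=-\tfrac2T e^{-2t/T}$, which yields
\[
\int_0^T e^{-2t/T}\dot E\,dt = e^{-2}E(T)-E(0)+\frac2T\int_0^T e^{-2t/T}E(t)\,dt.
\]
Since $E\ge 0$, the last term is non-negative and may be discarded to obtain the lower bound $e^{-2}E(T)-E(0)$.

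Next I would apply Lemma~\ref{lemma:calderon_time_pos} to bound the Calderon integral from below by $\beta c_T\int_0^T e^{-2t/T}\bigl(\|\partial_t^{-1}\varphi\|^2+\|\partial_t^{-1}\psi\|^2\bigr)\,dt$; the regularity and vanishing initial derivatives assumed for $\varphi,\psi$ are exactly those required there. Combining the two lower bounds with the right-hand side gives
\[
e^{-2}E(T)+\beta c_T\int_0^T e^{-2t/T}\bigl(\|\partial_t^{-1}\varphi\|^2+\|\partial_t^{-1}\psi\|^2\bigr)\,dt \le E(0)+\int_0^T e^{-2t/T}S(t)\,dt.
\]

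Finally I would multiply through by $e^2$. On the right, $e^2 e^{-2t/T}$ becomes $e^{2(1-t/T)}$ and the first term becomes $e^2E(0)$, matching the claimed bound exactly. On the left, $e^2\cdot e^{-2}E(T)=E(T)$, while for the dissipation integral I would use that $e^2 e^{-2t/T}=e^{2(1-t/T)}\ge 1$ for $t\in[0,T]$ together with the non-negativity of the squared-norm integrand, so the weighted integral dominates the unweighted one. This yields
\[
E(T)+\beta c_T\int_0^T\bigl(\|\partial_t^{-1}\varphi\|^2+\|\partial_t^{-1}\psi\|^2\bigr)\,dt \le e^2E(0)+\int_0^T e^{2(1-t/T)}S(t)\,dt,
\]
which is the assertion. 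There is no genuine obstacle here: once the weight $e^{-2t/T}$ is fixed, the argument is bookkeeping, and the only step demanding care is the interplay between the three exponential factors — the weight in the tested identity, the factor $e^{-2}$ produced by integration by parts, and the compensating $e^2$ in the final multiplication.
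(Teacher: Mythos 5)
Your proof is correct and is precisely the argument the paper gives in its one-line proof: multiply \eqref{eq:lemma42} by $e^{-2t/T}$, integrate by parts discarding the non-negative term $\frac2T\int_0^T e^{-2t/T}E\,dt$, apply Lemma~\ref{lemma:calderon_time_pos} to the Calderon term, and rescale by $e^2$, using $e^{2(1-t/T)}\ge 1$ on $[0,T]$ to drop the weight in the dissipation integral. One remark: what your argument actually delivers (correctly, since it comes from Lemma~\ref{lemma:calderon_time_pos}) is the bound with $\|\partial_t^{-1}\varphi\|_{H^{-1/2}(\Gamma)}$ and $\|\partial_t^{-1}\psi\|_{H^{1/2}(\Gamma)}$, so the swapped Sobolev indices in the statement of Lemma~\ref{lemma:4.2} are evidently a typo, as confirmed by the way the estimate is used after the lemma.
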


\begin{proof}
The result follows from multiplying \eqref{eq:lemma42} by $e^{-2t/T}$, using that $E$ is non-negative and applying Lemma \ref{lemma:calderon_time_pos}.
\qed\end{proof}

\subsection{First-order formulation and energy estimate}

We rewrite the wave equation as a first-order system (and omit the superscript $^-$ in the interior)
\begin{equation}
  \label{eq:wave}  
\begin{array}{rcl}
\dot u &=& \nabla \cdot v + f 
\\
\dot v &=& \nabla u \qquad\qquad\quad
\end{array}\text{in }\Omega,
\end{equation}
with the coupling condition $\psi = \gamma u$, $\varphi = -\gamma v \cdot n$ 
expressed as
\begin{equation*}
  %\label{eq:bie}
B(\partial_t)
  \begin{pmatrix}
   \varphi\\ \psi
  \end{pmatrix}
= \tfrac12
\begin{pmatrix}
  \gamma u \\ -\gamma v \cdot n
\end{pmatrix}
\qquad \hbox{on } \Gamma.
\end{equation*}
As in \cite{AbbJRT}, we determine the weak formulation using
\[
(\nabla\cdot v,w) = -\half (v,\nabla w) + \half (\nabla\cdot v,w) +\half\langle n\cdot \gamma v,w\rangle_\Gamma
\]
and similarly for $(\nabla u, z)$.  Here $(\cdot, \cdot)$ denotes the inner product in $L_2(\Omega)$ or $L_2(\Omega)^3$ as appropriate.  With this weak formulation the coupled system reads
\begin{align}
&  (\dot u, w) = -\half(v,\nabla w) + \half (\nabla\cdot v,w) -\half\langle\varphi,\gamma w\rangle_\Gamma +(f,w)\\
&  (\dot v, z) = -\half(u,\nabla\cdot z) + \half (\nabla u,z) +\half\langle\psi,\gamma z\cdot n\rangle_\Gamma\\
& \left\langle
  \begin{pmatrix}
    \xi\\\eta
  \end{pmatrix}, {B} (\partial_t) 
  \begin{pmatrix}
    \varphi\\\psi
  \end{pmatrix}
\right\rangle_\Gamma = \half\langle\xi,\gamma u\rangle_\Gamma - \half \langle \gamma v\cdot n, \eta\rangle_\Gamma 
\end{align}
for all $w,z\in H^1(\Omega)$ and $(\xi,\eta)\in H^{-1/2}(\Gamma)\times H^{1/2}(\Gamma) $.
Testing with $w = u, z = v, \xi = \varphi, \eta = \psi$ and adding the three equations up we get
\[
\frac{d}{dt} \left(\half\|u\|_{L_2(\Omega)}^2+\half\|v\|_{L_2(\Omega)}^2\right)
+\left\langle
  \begin{pmatrix}
    \varphi\\\psi
  \end{pmatrix}, {B} (\partial_t) 
  \begin{pmatrix}
    \varphi\\\psi
  \end{pmatrix}
\right\rangle_\Gamma
= (f,u).\]
From the positivity property of the Calderon operator in Lemma~\ref{lemma:calderon_time_pos} it follows  that the {\it field energy} (so called because its Maxwell analogue is the electro-magnetic energy in the field)
$$
E = \half\|u\|_{L_2(\Omega)}^2+\half\|v\|_{L_2(\Omega)}^2
$$
satisfies for $t > 0$  (if $f=0$; see Lemma \ref{lemma:4.2}) 
$$
E(t)+ \beta c_t \int_0^t
\Bigl( \| \partial_t^{-1}\varphi (\cdot,\tau)\|_{H^{-1/2}(\Gamma)}^2 +\| \partial_t^{-1} \psi(\cdot,\tau) \|_{H^{1/2}(\Gamma)}^2\Bigr)d\tau \le e^2\, E(0).
$$

\section{Discretization}
\subsection{FEM--BEM spatial semidiscretization}

Let $U_h,V_h,\Psi_h, \Phi_h$ be finite dimensional subspaces of the following Sobolev spaces
\[
U_h \subset H^1(\Omega), \quad V_h = U_h^3 \subset H^1(\Omega)^3, \quad \Psi_h \subset H^{1/2}(\Gamma), \quad \Phi_h \subset H^{-1/2}(\Gamma).
\]
In particular we can choose $U_h$ as the finite element space of piecewise linear functions, $\Psi_h$ the boundary element space of piecewise linear functions, and $\Phi_h$ the boundary element space of piecewise constant functions.  The chosen bases of these spaces are denoted by $(b_i^U), (b_j^V),(b_k^\Psi)$, and $(b_\ell^\Phi)$,  respectively. We assume that $\Psi_h$ and $\Phi_h$ contain the traces of $U_h$: $
 \gamma U_h \subseteq \Psi_h,\;   \partial_n U_h \subseteq \Phi_h.
$

The semi-discretized system then reads: find $u_h(\cdot,t) \in U_h$, $v_h(\cdot,t) \in V_h$, $\varphi_h(\cdot,t)  \in \Phi_h$, $\psi_h(\cdot,t)  \in \Psi_h$ such that
\begin{align}\label{eq:semi_disc}
&  (\dot u_h, w_h) = -\half(v_h,\nabla w_h) + \half (\nabla\cdot v_h,w_h) -\half\langle\varphi_h,\gamma w_h\rangle_\Gamma +(f,w_h)\\
&  (\dot v_h, z_h) = -\half(u_h,\nabla\cdot z_h) + \half (\nabla u_h,z_h) +\half\langle\psi_h,\gamma z_h\cdot n\rangle_\Gamma\\
& \left\langle
  \begin{pmatrix}
    \xi_h\\\eta_h
  \end{pmatrix}, {B} (\partial_t) 
  \begin{pmatrix}
    \varphi_h\\\psi_h
  \end{pmatrix}
\right\rangle_\Gamma = \half\langle\xi_h,\gamma u_h\rangle_\Gamma - \half \langle \gamma v_h\cdot n, \eta_h\rangle_\Gamma 
\end{align}
for all $w_h \in U_h$, $z_h \in V_h$, $\xi_h \in \Phi_h$, and  $\eta_h\in \Psi_h$. 

For the vectors of nodal values this leads to a coupled system of ordinary differential and integral equations
\begin{eqnarray*}
\mathbf{M}_0 \dot \bu &=& -\mathbf{D}^T \bv - \mathbf{C}_0\bvarphi+\mathbf{M}_0\mathbf{f}
\\
\mathbf{M}_1 \dot \bv &=& \phantom{-}\,\mathbf{D}\bu - \mathbf{C}_1\bpsi
\\
 \mathbf{B}(\partial_t)  \begin{pmatrix}
   \bvarphi\\ \bpsi
  \end{pmatrix}
&=& 
\begin{pmatrix}
  \mathbf{C}_0^T \bu \\ \mathbf{C}_1^T \bv
\end{pmatrix}.
\end{eqnarray*}
The matrices $\mathbf{M}_0$ and $\mathbf{M}_1$ denote the symmetric positive definite mass matrices whose entries are the inner products of the basis functions of $U_h$ and $V_h$, respectively. The matrices $\mathbf{D}, \mathbf{C}_0, \mathbf{C}_1$ have the entries
\[
\mathbf{D}|_{ji} =  -\half (b_j^{V}, \nabla b_i^U) + \half(\nabla \cdot b_j^V, b_i^U),
\]
and
\[
\mathbf{C}_0|_{k i} = -\half \langle b_k^\Phi, \gamma b_i^U \rangle_\Gamma, \qquad
\mathbf{C}_1|_{\ell j} = \half \langle b_\ell^\Psi, \gamma b_j^V \cdot n\rangle_\Gamma.
\]
The matrix $\mathbf{B}(s)$ is given as
\[
\mathbf{B}(s) =
\begin{pmatrix}
  s \mathbf{V}(s) & \mathbf{K}(s)\\
-\mathbf{K}^T(s) & \tfrac1s \mathbf{W}(s)
\end{pmatrix},
\]
where the blocks are given by 
\[
\mathbf{V}(s)|_{k k'} = \langle b_k^{\Phi},  V(s) b_{k'}^{\Phi}\rangle_\Gamma, \quad
\mathbf{K}(s)|_{k\ell } = \langle b_k^{\Phi},  K(s) b_{\ell}^{\Psi}\rangle_\Gamma, \quad
\mathbf{W}(s)|_{\ell \ell'} = \langle b_\ell^{\Psi},  W(s) b_{\ell'}^{\Psi}\rangle_\Gamma.
\]

We note that differentiating the first and last equations and eliminating $\bv$ yields the second-order formulation
\begin{eqnarray*}
\mathbf{M}_0 \ddot\bu &=& -\mathbf{D}^T \mathbf{M}_1^{-1}(\mathbf{D}\bu-\mathbf{C}_1\bpsi) - \mathbf{C}_0\dot \bvarphi+\mathbf{M}_0 \dot{\mathbf{f}}
\\
 \mathbf{B}(\partial_t)  \begin{pmatrix}
  \dot \bvarphi\\ \dot \bpsi
  \end{pmatrix}
&=& 
\begin{pmatrix}
  \mathbf{C}_0^T \dot\bu \\ \mathbf{C}_1^T \mathbf{M}_1^{-1} (\mathbf{D}u-\mathbf{C}_1\bpsi)
\end{pmatrix}.
\end{eqnarray*}

\subsection{Leapfrog--convolution quadrature time discretization}

We couple the leapfrog or St\"ormer--Verlet scheme
\begin{eqnarray*}
\mathbf{M}_1 \bv^{n+1/2} &=& \mathbf{M}_1 \bv^n +\half\dt \,\mathbf{D}\bu^n - \half \dt \,\mathbf{C}_1\bpsi^n
\\
\mathbf{M}_0 \bu^{n+1} &=& \mathbf{M}_0 \bu^n - \dt \,\mathbf{D}^T \bv^{n+1/2} - \dt\, \mathbf{C}_0\bvarphi^{n+1/2}+\dt  \mathbf{M}_0\mathbf{f}^{n+1/2}
\\
\mathbf{M}_1 \bv^{n+1} &=& \mathbf{M}_1 \bv^{n+1/2} +\half\dt \,\mathbf{D}\bu^{n+1} - \half \dt \,\mathbf{C}_1\bpsi^{n+1}
\end{eqnarray*}
to convolution quadrature
\begin{eqnarray*}
\biggl[  \mathbf{B}(\partial_t^\dt)  \begin{pmatrix}
   \bvarphi\\ \bar\bpsi
  \end{pmatrix}\biggr]^{n+1/2}
&=& 
\begin{pmatrix}
  \mathbf{C}_0^T \bar \bu^{n+1/2} \\ \mathbf{C}_1^T (\bv^{n+1/2}-\alpha \dt^2\mathbf{M}_1^{-1}\mathbf{C}_1\dot\bpsi^{n+1/2})
\end{pmatrix},
\end{eqnarray*}
where $ \bar \bu^{n+1/2}= \half (\bu^{n+1}+\bu^n)$ and $ \bar \bpsi^{n+1/2}= \half (\bpsi^{n+1}+\bpsi^n)$, and where $\alpha>0$ is a stabilization parameter and $\dot\bpsi^{n+1/2}=(\bpsi^{n+1}-\bpsi^n)/\dt$. The role of the stabilization term will become clear in the stability analysis. Under the CFL condition $\dt\|\mathbf{D}\|\le 1$ we can take $\alpha=1$ to obtain a stable scheme.

\subsection{Computing the discrete solution}
Let us assume that at time-step $n$, $\bv^n,\bu^n$, $\bvarphi^{j-1/2}$, and $\bpsi^{j}$, $j = 0,\dots,n$, are known.  Using the first equation above we can compute $\bv^{n+1/2}$. In the final equation we rewrite $\bar \bu^{n+1/2}$ and $\dot \bpsi^{n+1/2}$ as
\[
\bar \bu^{n+1/2} = \half (\bu^n -\Delta t \mathbf{M}_0^{-1}\mathbf{D}^T\bv^{n+1/2}-\Delta t \mathbf{M}_0^{-1} \mathbf{C}_0 \bvarphi^{n+1/2}+ \dt \mathbf{f}^{n+1/2}+\bu^n)
\]
and
\[
\Delta t \dot \bpsi^{n+1/2} = 2\bar\bpsi^{n+1/2}-2\bpsi^n.
\]
Grouping the known and unknown quantities together we obtain an equation for $\bvarphi^{n+1/2}$ and $\bar\bpsi^{n+1/2}$:
\[
\left(\mathbf{B}_0 + \Delta t \mathbf{H} \right)
\begin{pmatrix}
  \bvarphi^{n+1/2} \\ \bar\bpsi^{n+1/2}
\end{pmatrix}
= \bs{\chi}^n,
\]
where $\mathbf{B}_0 = \mathbf{B}(\delta(0)/\dt)$, $\bs{\chi}^n$ contains known quantities  and
\[
\mathbf{H} =
\begin{pmatrix}
  \half \mathbf{C}_0^T\mathbf{M}_0^{-1}\mathbf{C}_0 &\\& 2\alpha \mathbf{C}_1^T\mathbf{M}_1^{-1}\mathbf{C}_1
\end{pmatrix}.
\]
Both $\mathbf{B}_0$  and $\mathbf{H}$ are positive definite, hence a unique solution exists. The remaining unknowns at time-step $n+1$ can then be directly obtained from the second and third equations.

\section{Stability of the spatial semidiscretization}
\label{sec:stab-semidiscrete}

\subsection{Setting of the stability analysis}
\label{subsec:stab-semidiscrete-setting}

In the following {\it analysis} we  assume that the bases of $U_h$, $V_h$, $\Psi_h$, and $\Phi_h$ are orthonormal in
$L_2(\Omega)$, $L_2(\Omega)^3$, $H^{1/2}(\Gamma)$, $H^{-1/2}(\Gamma)$, respectively, so that the corresponding inner products are just the Euclidean inner products of the coefficient vectors, which are denoted by $(\cdot,\cdot)$ for the interior variables, and by $\langle \cdot,\cdot \rangle_\Gamma$ for the boundary variables. The Euclidean norms will be denoted by $|\cdot|$. The time discretization scheme then takes the above form with the simplification that the mass matrices $\mathbf{M}_0$ and $\mathbf{M}_1$ are identity matrices. 

In this section we are interested in the propagation of spatial discretization errors. For the errors we have similar equations but with additional inhomogeneities on the right-hand side, which are the residuals on inserting a projection of the exact solution onto the finite element space into the scheme.  We then end up with the task of bounding the Euclidean norms of the solutions to the equations
\begin{equation}\label{eq:semi_disc_perturb}
  \begin{aligned}    
\dot \bu &= -\mathbf{D}^T \bv - \mathbf{C}_0\bvarphi+\mathbf{\mathbf{f}},
\\
 \dot \bv &= \phantom{-}\,\mathbf{D}\bu - \mathbf{C}_1\bpsi+\bg,
\\
 \mathbf{B}(\partial_t)  \begin{pmatrix}
   \bvarphi\\ \bpsi
  \end{pmatrix}
&= 
\begin{pmatrix}
  \mathbf{C}_0^T \bu \\ \mathbf{C}_1^T \bv
\end{pmatrix}
+
\begin{pmatrix}
  \bs{\rho}\\ \bs{\sigma}
\end{pmatrix}
  \end{aligned}
\end{equation}
in terms of the Euclidean norms of the perturbations $\mathbf{\mathbf{f}},\bg,  \bs{\rho}, \bs{\sigma}$.  

We have the positivity relation, with  $\beta>0$ independent of the gridsize,
\[
\begin{split}  
\int_0^T e^{-2t/T}& \left \langle
 \begin{pmatrix}
      \bvarphi(t) \\ \bpsi(t)
  \end{pmatrix}, \mathbf{B}(\partial_t) 
  \begin{pmatrix}
      \bvarphi \\ \bpsi
  \end{pmatrix}(t)
\right \rangle_\Gamma dt 
\\
&\geq 
\beta\, c_T \int_0^T e^{-2t/T}\left(|\partial_t^{-1}\bvarphi(t)|^2+ |\partial_t^{-1}\bpsi(t)|^2 \right)dt,
\end{split}
\]
which is inherited from the corresponding property of the Calderon operator ${B}(\partial_t)$.

\subsection{Field energy}
\label{subsec:stab-semiE}

\begin{lemma}\label{lemma:semiE}
The semi-discrete field energy
\[
E(t) = \half |\bu(t)|^2+ \half |\bv(t)|^2
\]
is bounded along the solutions of \eqref{eq:semi_disc_perturb} by
\[
\begin{split}  
E(t) \leq C(\beta) \Bigl(E(0)\Bigr. &+ t\int_0^t \left(|\mathbf{f}(\tau)|^2  + |\bg(\tau)|^2 \right)d\tau\\ &+ \Bigl.\max(t^2,t^6) \int_0^t \left( |\ddot \brho(\tau)|^2 + |\ddot \bsigma(\tau)|^2 \right) d\tau \Bigr),
\end{split}
\]
for $t > 0$. This estimate holds provided $\brho(0) = \dot\brho(0) = 0$ and $\bsigma(0)  = \dot \bsigma(0) = 0$.
\end{lemma}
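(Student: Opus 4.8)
The plan is to run the standard energy argument for the first-order system and feed the resulting identity into Lemma~\ref{lemma:4.2}. First I would test the first two equations of \eqref{eq:semi_disc_perturb} with $\bu$ and $\bv$ and add them: the cross terms $-(\bu,\mathbf{D}^T\bv)+(\bv,\mathbf{D}\bu)$ cancel, leaving
\[
\dot E = -\langle \mathbf{C}_0^T\bu,\bvarphi\rangle_\Gamma-\langle \mathbf{C}_1^T\bv,\bpsi\rangle_\Gamma+(\bu,\mathbf{f})+(\bv,\bg).
\]
Substituting the third equation of \eqref{eq:semi_disc_perturb}, i.e.\ expressing $\mathbf{C}_0^T\bu$ and $\mathbf{C}_1^T\bv$ through $\mathbf{B}(\partial_t)(\bvarphi,\bpsi)^T$ and the residuals $(\brho,\bsigma)$, turns this into exactly the form \eqref{eq:lemma42},
\[
\dot E+\left\langle\begin{pmatrix}\bvarphi\\\bpsi\end{pmatrix},\mathbf{B}(\partial_t)\begin{pmatrix}\bvarphi\\\bpsi\end{pmatrix}\right\rangle_\Gamma = S,\qquad S=\langle\bvarphi,\brho\rangle_\Gamma+\langle\bpsi,\bsigma\rangle_\Gamma+(\bu,\mathbf{f})+(\bv,\bg).
\]
Lemma~\ref{lemma:4.2} (applied with $T=t$) then gives
\[
E(t)+\beta c_t\!\int_0^t\!\bigl(|\partial_t^{-1}\bvarphi|^2+|\partial_t^{-1}\bpsi|^2\bigr)d\tau \le e^2E(0)+\int_0^t e^{2(1-\tau/t)}S(\tau)\,d\tau,
\]
and it remains to estimate the two kinds of source contributions against the left-hand side.

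For the interior forcing I would use Cauchy--Schwarz together with $|\bu|,|\bv|\le\sqrt{2E}$ and the elementary Gronwall inequality for $\sqrt E$: from $E(t)\le A+C\int_0^t\sqrt{E}\,(|\mathbf{f}|+|\bg|)\,d\tau$ one obtains $\sqrt{E(t)}\le\sqrt A+C'\int_0^t(|\mathbf{f}|+|\bg|)\,d\tau$, and squaring together with $\bigl(\int_0^t(|\mathbf{f}|+|\bg|)\bigr)^2\le 2t\int_0^t(|\mathbf{f}|^2+|\bg|^2)$ produces the stated term $t\int_0^t(|\mathbf{f}|^2+|\bg|^2)$; here $A$ collects $e^2E(0)$ and the boundary contributions treated next.

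The boundary forcing is the crux, because the coercivity of $\mathbf{B}(\partial_t)$ on the left controls only $\partial_t^{-1}\bvarphi$ and $\partial_t^{-1}\bpsi$ in $L^2$, whereas $S$ pairs $\brho,\bsigma$ with the undifferentiated densities $\bvarphi,\bpsi$. I would remove this one-derivative mismatch by integrating by parts in time, writing $\langle\bvarphi,\brho\rangle_\Gamma=\tfrac{d}{d\tau}\langle\partial_t^{-1}\bvarphi,\brho\rangle_\Gamma-\langle\partial_t^{-1}\bvarphi,\dot\brho\rangle_\Gamma$ (and likewise for $\bpsi,\bsigma$), equivalently by passing to the modified energy $E-\langle\partial_t^{-1}\bvarphi,\brho\rangle_\Gamma-\langle\partial_t^{-1}\bpsi,\bsigma\rangle_\Gamma$. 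The remaining integral $\int_0^t e^{2(1-\tau/t)}\langle\partial_t^{-1}\bvarphi,\dot\brho\rangle_\Gamma\,d\tau$ is then split by Young's inequality, its $\partial_t^{-1}\bvarphi$-part being absorbed into the coercivity term $\beta c_t\int_0^t|\partial_t^{-1}\bvarphi|^2$ on the left. Finally I would invoke the vanishing initial data $\brho(0)=\dot\brho(0)=0$ to write $\dot\brho(\tau)=\int_0^\tau\ddot\brho$ and $\brho(\tau)=\int_0^\tau(\tau-s)\ddot\brho(s)\,ds$, so that Cauchy--Schwarz bounds $\int_0^t|\dot\brho|^2$ and $\int_0^t|\brho|^2$ by powers of $t$ times $\int_0^t|\ddot\brho|^2$; combined with the factor $(\beta c_t)^{-1}=\beta^{-1}\max(t,t^3)$ from the absorption this produces the claimed $\max(t^2,t^6)\int_0^t(|\ddot\brho|^2+|\ddot\bsigma|^2)$.

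The step I expect to be genuinely delicate is the endpoint term $\langle\partial_t^{-1}\bvarphi(t),\brho(t)\rangle_\Gamma$ generated by the integration by parts at $\tau=t$: the weight $e^{2(1-\tau/t)}$ does not vanish there, and $\partial_t^{-1}\bvarphi(t)$ is \emph{not} controlled pointwise in time by the coercivity estimate, which only bounds its $L^2(0,t)$-norm. Handling this term --- e.g.\ by exploiting the causal structure of $\mathbf{B}(\partial_t)$ together with the first equation of \eqref{eq:semi_disc_perturb}, or by a bootstrap that also controls the trace quantities at the final time --- is where the careful bookkeeping of the powers of $t$ and of the assumptions $\brho(0)=\dot\brho(0)=0$, $\bsigma(0)=\dot\bsigma(0)=0$ really enters; everything else reduces to Cauchy--Schwarz, Young, and Gronwall.
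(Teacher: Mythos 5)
Your part (a) and your treatment of the interior forcing coincide with the paper's proof: the tested energy identity, Lemma~\ref{lemma:4.2}, and Cauchy--Schwarz plus Gronwall giving the term $t\int_0^t(|\mathbf{f}|^2+|\bg|^2)\,d\tau$ are exactly the paper's steps (a) and (b). The genuine gap is in your treatment of the boundary pairing, and it is precisely the step you yourself flag as delicate: after integrating by parts in time, the endpoint term $\langle\partial_t^{-1}\bvarphi(t),\brho(t)\rangle_\Gamma$ (and its $\bpsi$-analogue) requires a \emph{pointwise-in-time} bound on $\partial_t^{-1}\bvarphi(t)$, whereas the coercivity of $\mathbf{B}(\partial_t)$ supplied by Lemma~\ref{lemma:4.2} controls only its $L^2(0,t)$-norm, and the energy $E$ contains no term in $\partial_t^{-1}\bvarphi$ at all. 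Young's inequality therefore has nothing on the left-hand side to absorb $|\partial_t^{-1}\bvarphi(t)|^2$ into, and your proposed remedies (``causal structure'', ``bootstrap'') are gestures, not arguments. Note that the arithmetic of powers of $t$ is not the problem --- absorbing the $\langle\partial_t^{-1}\bvarphi,\dot\brho\rangle_\Gamma$ integral would cost $(\beta c_t)^{-1}\int_0^t|\dot\brho|^2\le \beta^{-1}\max(t^3,t^5)\int_0^t|\ddot\brho|^2$, comfortably inside $\max(t^2,t^6)$ --- the obstruction is structural.

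The paper circumvents the derivative mismatch by a different decomposition: by linearity it isolates the case $E(0)=0$, $\mathbf{f}=\bg=0$, and for that case leaves the time domain entirely. Laplace transforming \eqref{eq:semi_disc_perturb}, testing, taking real parts and invoking the pointwise positivity of $\mathbf{B}(s)$ (Lemma~\ref{lemma:calderon_pos}) gives, for $\Re s>1/t$,
\[
|\widehat\bvarphi|^2+|\widehat\bpsi|^2 \le \beta^{-2}\max(t^2,t^6)\left(|s^2\widehat\brho|^2+|s^2\widehat\bsigma|^2\right),
\]
whence by Plancherel and causality the bound \eqref{eq:E_L2_phi},
\[
\int_0^t\left(|\bvarphi(\tau)|^2+|\bpsi(\tau)|^2\right)d\tau \le (e/\beta)^2\max(t^2,t^6)\int_0^t\left(|\ddot\brho(\tau)|^2+|\ddot\bsigma(\tau)|^2\right)d\tau.
\]
This second, independent estimate controls the \emph{undifferentiated} densities in $L^2(0,t)$ --- exactly the quantity your integration by parts was trying, and failing, to manufacture from the antiderivative bound. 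With it, the boundary term in \eqref{eq:energy_ineq} is estimated by plain Cauchy--Schwarz against $\int_0^t|\brho|^2\,d\tau\le \tfrac{t^4}{12}\int_0^t|\ddot\brho|^2\,d\tau$ (this is where $\brho(0)=\dot\brho(0)=0$ enters, as in \eqref{eq:sec6a}), producing no endpoint term whatsoever. If you want to complete your proof, the missing idea is this frequency-domain detour: the positivity of $\mathbf{B}(s)$ for each fixed $s$ is strictly stronger information than the integrated time-domain coercivity, and it is what yields an $L^2$ bound on $\bvarphi,\bpsi$ themselves at the cost of two derivatives on the data.
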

\begin{proof}
(a) Taking the inner product of the first equation  in \eqref{eq:semi_disc_perturb} with $\bu$, the second with $\bv$, and the third with $(\bvarphi, \bpsi)^T$ and summing the equations we get
\[
\dot E
+\left\langle
  \begin{pmatrix}
    \bvarphi\\\bpsi
  \end{pmatrix}, \mathbf{B} (\partial_t) 
  \begin{pmatrix}
    \bvarphi\\\bpsi
  \end{pmatrix}
\right\rangle_\Gamma
= (\bu,\mathbf{\mathbf{f}})+(\bv, \bg)+\left\langle
\begin{pmatrix}
\bvarphi\\  \bpsi
\end{pmatrix},
  \begin{pmatrix}
\bs{\brho}\\ \bs{\bsigma}   
  \end{pmatrix}
\right\rangle_\Gamma.  
\]
Integrating and using the positivity property of $\mathbf{B}(\partial_t)$ (Lemma \ref{lemma:4.2}) gives that the semi-discrete field energy satisfies, for $t > 0$,
\begin{equation}
  \label{eq:energy_ineq}  
\begin{split}  
E(t)+ & \beta \min(t^{-1},t^{-3})\int_0^t
\Bigl( | \partial_t^{-1}\bvarphi (\tau)|^2 +| \partial_t^{-1} \bpsi(\tau) |^2\Bigr)d\tau \\
\le & e^2 \Big( E(0) + \int_0^t |(\bu(\tau), \mathbf f(\tau))+(\bv(\tau), \bg(\tau))|d\tau\\
& +\int_0^t |\langle \bvarphi(\tau),\bs{\brho}(\tau)\rangle_\Gamma+\langle\bpsi(\tau),\bs{\bsigma}(\tau) \rangle_\Gamma|d\tau
\Big).
\end{split}
\end{equation}
This estimate is however not sufficient in order to estimate $E(t)$ in terms of $E(0)$ and the perturbations $\mathbf{f}$, $\bg$, $\brho$ and $\bsigma$.  While $\mathbf{f}$ and $\bg$ pose no problems, the dependence on the boundary perturbations $\brho$ and $\bsigma$ needs to be treated in a different way. 

(b) If we first assume that $\brho$ and $\bsigma$ are zero, then using the Cauchy-Schwarz inequality 
\[
e^2|(\mathbf{f}(\tau),\bu(\tau))+(\bv(\tau), \bg(\tau))| \leq e^4 \tfrac t2 (|\mathbf{f}(\tau)|^2 +|\mathbf g(\tau)|^2)+ \tfrac1{t} E(t)
\]
and the Gronwall inequality we obtain the estimate
\[
E(t) \leq e \left(e^2 E(0) + e^4\frac t2 \int_0^t \left(|\mathbf{f}(\tau)|^2  + |\bg(\tau)|^2 \right)d\tau\right).
\]

(c) By linearity it remains to study the case $E(0) = 0$, $\mathbf{f} = 0$, and $\mathbf{g} = 0$.  We consider the Laplace transformed equations:
\begin{align*}
  s\widehat \bu &= -\mathbf{D}^T \widehat \bv - \mathbf{C}_0 \widehat\bvarphi\\
  s\widehat \bv &= \mathbf{D}\widehat \bu - \mathbf{C}_1 \widehat\bpsi\\
 \mathbf{B}(s)
 \begin{pmatrix}
   \widehat\bvarphi\\\widehat\bpsi
 \end{pmatrix}
&= 
\begin{pmatrix}
  \mathbf{C}_0^T \widehat \bu\\ \mathbf{C}_1^T \widehat \bv
\end{pmatrix}
+
\begin{pmatrix}
  \widehat  \brho\\  \widehat \bsigma
\end{pmatrix}.
\end{align*}
We take the inner product of the first equation with $ \widehat \bu$, the second with $ \widehat \bv$, and the third with $(\widehat \bvarphi, \widehat \bpsi)$ and sum up the real parts to obtain
\[
\Re s\,|\widehat \bu|^2 + \Re s\,|\widehat \bv|^2 +\Re\left \langle
\begin{pmatrix}
  \widehat \bvarphi\\\widehat \bpsi
\end{pmatrix}
, 
\mathbf{B}(s)
\begin{pmatrix}
  \widehat \bvarphi\\\widehat \bpsi
\end{pmatrix}
\right\rangle_\Gamma
= 
\Re\langle\widehat \bvarphi, \widehat \brho \rangle_\Gamma+ 
\Re \langle\widehat \bpsi,\widehat \bsigma \rangle_\Gamma.
\]
Using Lemma~\ref{lemma:calderon_pos} we obtain
\[
\beta \min(1,|s|^2)\frac{\Re s}{|s|^2} \left(|\widehat \bvarphi|^2+ |\widehat \bpsi|^2\right)
\leq |\langle\widehat \bvarphi, \widehat \brho \rangle_\Gamma|+ |\langle\widehat \bpsi,\widehat \bsigma \rangle_\Gamma|.
\]
Using the Cauchy-Schwarz inequality 
\[
|\langle\widehat \bvarphi, \widehat \brho \rangle_\Gamma| 
\leq \tfrac\beta{2} \min(1,|s|^2)\Re s|s^{-1}\widehat \bvarphi|^2+ 
\tfrac1{2\beta} \bigl(\min(1,|s|^2)\Re s\bigr)^{-1}|s\widehat \brho|^2
\]
 we obtain for $\Re s > 1/t$
\[
|\widehat \bvarphi|^2+ |\widehat \bpsi|^2 \leq \beta^{-2}\max(t^2,t^6)
\left(  |s^2 \widehat \brho|^2+ |s^2 \widehat \bsigma|^2\right).
\]
With the Plancherel formula and causality we conclude
\begin{equation}
  \label{eq:E_L2_phi}
\int_0^t \left(|\bvarphi(\tau)|^2 + |\bpsi(\tau)|^2\right)  d\tau
\leq (e/\beta)^2 \max(t^2,t^6)  \int_0^t  \left(|\ddot\brho(\tau)|^2 + |\ddot\bsigma(\tau)|^2 \right) d\tau.
\end{equation}
Note now that
\begin{equation}\label{eq:sec6a}
\begin{split}  
\int_0^t |\brho(\tau)|^2 d\tau &= \int_0^t \left|\int_0^\tau (\tau-\alpha)\ddot \brho(\alpha)\, d\alpha\right|^2d\tau \leq \int_0^t \frac{\tau^3}{3} \int_0^\tau |\ddot\brho(\alpha)|^2\,
d\alpha \, d\tau\\
&\leq \int_0^t \frac{\tau^3}{3} \int_0^t |\ddot\brho(\alpha)|^2\,d\alpha\, d\tau
= \frac{t^4}{12} \int_0^t |\ddot \brho(\tau) |^2 \, d\tau.
\end{split}
\end{equation}
Applying the Cauchy-Schwarz inequality in the last term of \eqref{eq:energy_ineq},  and using \eqref{eq:E_L2_phi} and \eqref{eq:sec6a}, proves the result if $E(0) = 0$, $\mathbf{f} = 0$, and $\bg = 0$.

(d)  Denoting the solution of (b) by $\bu_\Omega$ and that of part (c) by $\bu_\Gamma$, the solution of the general problem is given as $\bu = \bu_\Omega + \bu_\Gamma$ and bounded by $|\bu|^2 \leq 2(|\bu_\Omega|^2+|\bu_\Gamma|^2)$. With the estimates of (b) and (c) this gives the result.
\qed \end{proof}

\subsection{Mechanical energy}
Differentiating the first and last equations in \eqref{eq:semi_disc_perturb} and eliminating $\bv$ yields the second-order formulation
\begin{equation}
  \label{eq:ddot-u}  
  \begin{array}{rcl}    
\ddot \bu &=& -\mathbf{D}^T \mathbf{D}\bu + \mathbf{D}^T\mathbf{C}_1\bpsi - \mathbf{C}_0\dot \bvarphi+\dot{\mathbf{f}}-\mathbf{D}^T\bg
\\
 \mathbf{B}(\partial_t)  \begin{pmatrix}
  \dot \bvarphi\\ \dot \bpsi
  \end{pmatrix}
&=& 
\begin{pmatrix}
  \mathbf{C}_0^T \dot \bu \\ \mathbf{C}_1^T (\mathbf{D}\bu-\mathbf{C}_1\bpsi)
\end{pmatrix}
+
\begin{pmatrix}
 \dot \brho\\ \dot \bsigma + \mathbf{C}_1^T \bg 
\end{pmatrix}.
  \end{array}
\end{equation}
\begin{lemma}\label{lemma:semiH}
The semi-discrete mechanical energy
\[
H(t) = \half |\dot \bu(t)|^2+ \half | \mathbf{D}\bu(t) - \mathbf{C}_1 \bpsi(t)|^2
\]
is bounded along the solutions of \eqref{eq:semi_disc_perturb} by
\[
\begin{split}  
H(t) \leq   C(\beta) \Bigl(H(0)\Bigr. &+ t \int_0^t |\dot{\mathbf{f}}(\tau)-\mathbf{D}^T \bg(\tau)|^2 d\tau\\ &+  \Bigl.\max(t^2,t^6) \int_0^t \left( |\ddot \brho(\tau)|^2 + |\ddot \bsigma(\tau)+\mathbf C^T_1 \dot{\bg}(\tau)|^2 \right) d\tau \Bigr),
\end{split}
\]
for all $t > 0$.  This estimate holds provided $\brho(0) = \dot \brho(0) = 0$, $\bsigma(0) = \dot \bsigma(0) = 0$, and $\bg(0) = 0$.
\end{lemma}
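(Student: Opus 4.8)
The plan is to transfer the argument of Lemma~\ref{lemma:semiE} from the first-order system to the second-order system \eqref{eq:ddot-u}, with $H$ now playing the role of the field energy. First I would derive the governing energy identity. Writing $\bw=\mathbf{D}\bu-\mathbf{C}_1\bpsi$, so that $H=\half|\dot\bu|^2+\half|\bw|^2$ and the first line of \eqref{eq:ddot-u} reads $\ddot\bu=-\mathbf{D}^T\bw-\mathbf{C}_0\dot\bvarphi+(\dot{\mathbf{f}}-\mathbf{D}^T\bg)$, I take the inner product with $\dot\bu$, use $\dot\bw=\mathbf{D}\dot\bu-\mathbf{C}_1\dot\bpsi$, and add $(\bw,\dot\bw)$. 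The terms $-(\dot\bu,\mathbf{D}^T\bw)$ and $(\bw,\mathbf{D}\dot\bu)$ cancel, and substituting the second line of \eqref{eq:ddot-u} for $\mathbf{C}_0^T\dot\bu$ and $\mathbf{C}_1^T\bw$ yields
\[
\dot H+\left\langle\begin{pmatrix}\dot\bvarphi\\\dot\bpsi\end{pmatrix},\mathbf{B}(\partial_t)\begin{pmatrix}\dot\bvarphi\\\dot\bpsi\end{pmatrix}\right\rangle_\Gamma=(\dot\bu,\dot{\mathbf{f}}-\mathbf{D}^T\bg)+\langle\dot\bvarphi,\dot\brho\rangle_\Gamma+\langle\dot\bpsi,\dot\bsigma+\mathbf{C}_1^T\bg\rangle_\Gamma .
\]
This is precisely the field-energy identity underlying Lemma~\ref{lemma:semiE}, under the substitutions $E\rightsquigarrow H$, interior data $(\mathbf{f},\bg)\rightsquigarrow(\dot{\mathbf{f}}-\mathbf{D}^T\bg,0)$, boundary unknowns $(\bvarphi,\bpsi)\rightsquigarrow(\dot\bvarphi,\dot\bpsi)$, and boundary data $(\brho,\bsigma)\rightsquigarrow(\dot\brho,\dot\bsigma+\mathbf{C}_1^T\bg)$.

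Next I would run the four-step scheme (a)--(d) of Lemma~\ref{lemma:semiE}. Applying Lemma~\ref{lemma:4.2} to the identity gives $H(t)$ together with the positivity term $\beta c_t\int_0^t(|\partial_t^{-1}\dot\bvarphi|^2+|\partial_t^{-1}\dot\bpsi|^2)$ on the left; the decisive feature is that this term controls the \emph{antiderivatives} $\partial_t^{-1}\dot\bvarphi,\partial_t^{-1}\dot\bpsi$, which (up to their initial values) are $\bvarphi,\bpsi$. By linearity of the data I split into an interior part ($\dot\brho=0$, $\dot\bsigma+\mathbf{C}_1^T\bg=0$) and a boundary part ($\dot{\mathbf{f}}-\mathbf{D}^T\bg=0$, $H(0)=0$). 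The interior part is handled exactly as in step (b): bounding $(\dot\bu,\dot{\mathbf{f}}-\mathbf{D}^T\bg)$ by Cauchy--Schwarz and applying Gronwall gives the contribution $t\int_0^t|\dot{\mathbf{f}}-\mathbf{D}^T\bg|^2$.

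The boundary part is the analogue of step (c), and it is here that one must avoid losing a derivative: the transformed data are already time derivatives, so transcribing \eqref{eq:E_L2_phi} verbatim would bound everything by $\int|\dddot\brho|^2$, whereas the claim only permits $\int|\ddot\brho|^2$. The missing derivative is recovered from the smoothing factor $|s|^{-2}$ in Lemma~\ref{lemma:calderon_pos}, equivalently from the $\partial_t^{-1}$ in the positivity term. Concretely, I would integrate the boundary term by parts in time, transferring the derivative from $\dot\bvarphi$ onto the data,
\[
\int_0^t e^{2(1-\tau/t)}\langle\dot\bvarphi,\dot\brho\rangle_\Gamma\,d\tau=\langle\bvarphi(t),\dot\brho(t)\rangle_\Gamma-\int_0^t e^{2(1-\tau/t)}\langle\bvarphi,\ddot\brho-\tfrac2t\dot\brho\rangle_\Gamma\,d\tau ,
\]
where the $\tau=0$ endpoint drops out because $\brho(0)=\dot\brho(0)=0$. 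The last integral is absorbed into the left-hand positivity term $\beta c_t\int_0^t|\bvarphi|^2$, at the cost of $\max(t,t^3)\int_0^t|\ddot\brho|^2$ (using $\int_0^t|\dot\brho|^2\le t^2\int_0^t|\ddot\brho|^2$); the analogous treatment of $\dot\bpsi$ produces $\int_0^t|\ddot\bsigma+\mathbf{C}_1^T\dot\bg|^2$. On the Fourier side this is the asymmetric Cauchy--Schwarz $|\langle s\widehat\bvarphi,\widehat{\dot\brho}\rangle|\le|s|\,|\widehat\bvarphi|\,|\widehat{\dot\brho}|$, split so that $|\widehat\bvarphi|^2$ is absorbed by the lower bound $\beta\min(1,|s|^2)\Re s\,|\widehat\bvarphi|^2$ of Lemma~\ref{lemma:calderon_pos} and the residue $|s|^2|\widehat{\dot\brho}|^2=|\widehat{\ddot\brho}|^2$ carries exactly two derivatives, matching $\max(t^2,t^6)\int_0^t(|\ddot\brho|^2+|\ddot\bsigma+\mathbf{C}_1^T\dot\bg|^2)$.

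The step I expect to be the main obstacle is the endpoint contribution $\langle\bvarphi(t),\dot\brho(t)\rangle_\Gamma$ produced by the integration by parts. Its data factor is harmless, since $|\dot\brho(t)|^2\le t\int_0^t|\ddot\brho|^2$ by $\dot\brho(0)=0$, and likewise $|(\dot\bsigma+\mathbf{C}_1^T\bg)(t)|^2\le t\int_0^t|\ddot\bsigma+\mathbf{C}_1^T\dot\bg|^2$ by $\dot\bsigma(0)=\bg(0)=0$; but the pointwise factor $|\bvarphi(t)|$ is controlled by $H$ only in integrated form, so it cannot simply be absorbed into the left-hand side. Reorganising this term so that it never appears isolated at the final time — for instance by carrying the pairing $\langle\bvarphi,\dot\brho\rangle_\Gamma+\langle\bpsi,\dot\bsigma+\mathbf{C}_1^T\bg\rangle_\Gamma$ inside a modified energy before invoking positivity — while keeping the derivative count at two, is the technical heart of the proof and is the point where I would expect to need an argument tailored to this structure rather than a direct transcription of Lemma~\ref{lemma:semiE}. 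Finally, step (d) recombines the interior and boundary solutions by linearity, using $|\,\cdot\,|^2\le2(|\,\cdot\,|_\Omega^2+|\,\cdot\,|_\Gamma^2)$ as in Lemma~\ref{lemma:semiE}, to give the asserted bound.
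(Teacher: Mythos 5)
You follow the paper's proof faithfully through most of its course: your energy identity is exactly the paper's step (a) (obtained there from the differentiated system \eqref{eq:ddot-u}), the linear splitting and the Cauchy--Schwarz/Gronwall treatment of the interior part are the paper's steps (b) and (d), and your parenthetical ``on the Fourier side'' remark is in fact the paper's actual mechanism for the boundary part. In its part (c) the paper never integrates by parts in time: it Laplace-transforms \eqref{eq:ddot-u}, tests with $s\widehat\bu$ and $(s\widehat\bvarphi,s\widehat\bpsi)^T$, discards the terms $s|s\widehat\bu|^2+\bar s\,|\mathbf{D}\widehat\bu-\mathbf{C}_1\widehat\bpsi|^2$ (nonnegative real part), and applies Lemma~\ref{lemma:calderon_pos} together with the weighted Cauchy--Schwarz you describe, so that the data enter with exactly two derivatives, as $s^2\widehat\brho$ and $s^2\widehat\bsigma+s\mathbf{C}_1^T\widehat\bg$ (incidentally, the correct lower bound at this point is $\beta\min(1,|s|^{-2})\,\Re s\,(|\widehat\bvarphi|^2+|\widehat\bpsi|^2)$, not your $\min(1,|s|^2)$). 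Plancherel on a vertical line $\Re s>1/t$ plus causality then give $\int_0^t(|\bvarphi|^2+|\bpsi|^2)\,d\tau\le (e/\beta)^2\max(t^2,t^6)\int_0^t\bigl(|\ddot\brho|^2+|\ddot\bsigma+\mathbf{C}_1^T\dot\bg|^2\bigr)\,d\tau$ with no boundary contribution at the finite time $t$, because the transfer of the factor $s$ happens on the whole line rather than on a finite interval. Your decision to carry out this transfer instead by integration by parts on $[0,t]$ is precisely what manufactures the isolated endpoint pairing $\langle\bvarphi(t),\dot\brho(t)\rangle_\Gamma$.

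That is where the genuine gap lies. You correctly diagnose that $|\bvarphi(t)|$ is available only in $L^2$ in time, you concede you cannot absorb the endpoint term, and your proposed escape (carrying the pairing inside a modified energy before invoking positivity) does not work as sketched: the quantity $H-\langle\bvarphi,\dot\brho\rangle_\Gamma-\langle\bpsi,\dot\bsigma+\mathbf{C}_1^T\bg\rangle_\Gamma$ is no longer nonnegative, so Lemma~\ref{lemma:4.2} does not apply to it, and the same endpoint reappears when converting back to $H(t)$ at the final time. So, as submitted, your part (c) announces the decisive estimate rather than proving it; the repair is to trust your own frequency-domain formulation and derive the two-derivative $L^2$ bound on $(\bvarphi,\bpsi)$ entirely on the Laplace side, as the paper does, before returning to \eqref{eq:Henergy_ineq}. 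In fairness to you, the paper's concluding sentence of part (c) (``Cauchy--Schwarz in the last term of \eqref{eq:Henergy_ineq}'') is itself laconic --- the printed last term still pairs $\dot\bvarphi$ with $\dot\brho$, and spelling that step out explicitly leads to exactly the rearrangement you attempted --- so you have put your finger on the one compressed spot in the paper's write-up; but whereas the paper settles the derivative count endpoint-free in the frequency domain, your version leaves the bound on $H(t)$ contingent on a term you never estimate.
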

\begin{proof}
  The proof is similar to that of Lemma~\ref{lemma:semiE}. 

(a) We take the inner product of the first equation in \eqref{eq:ddot-u} with $\dot \bu$ and the second with $(\dot \bvarphi, \dot \bpsi)^T$ and sum up:
\[
\dot H +\left\langle
  \begin{pmatrix}
    \dot \bvarphi\\\dot \bpsi
  \end{pmatrix}, \mathbf{B} (\partial_t) 
  \begin{pmatrix}
    \dot \bvarphi\\\dot\bpsi
  \end{pmatrix}
\right\rangle_\Gamma
= (\dot \bu, \dot{\mathbf{f}}-\mathbf{D}^T \bg) +\left\langle
\begin{pmatrix}
  \dot\bvarphi\\\dot\bpsi
\end{pmatrix}
,
\begin{pmatrix}
  \dot \brho\\ \dot \bsigma + \mathbf{C}_1^T \bg
\end{pmatrix}
\right \rangle_\Gamma.
\]
Integrating and using the positivity property of $\mathbf{B}(\partial_t)$ (Lemma \ref{lemma:4.2})  gives that the semi-discrete mechanical energy satisfies, for $t > 0$,
\begin{equation}
  \label{eq:Henergy_ineq}  
\begin{split}  
H(t)&+ \beta\min(t^{-1},t^{-3})\int_0^t
\Bigl( | \bvarphi (\tau)|^2 +|  \bpsi(\tau) |^2\Bigr)d\tau \\ \le 
& e^2 \Big( H(0)+ \int_0^t |(\dot \bu(\tau), \dot{\mathbf{f}}(\tau)-\mathbf{D}^T \bg(\tau))|d\tau\\
&  +\int_0^t |\langle\dot\bvarphi(\tau),\dot \brho(\tau)\rangle_\Gamma+\langle\dot\bpsi(\tau),\dot \bsigma(\tau) + \mathbf{C}_1^T \bg(\tau)\rangle_\Gamma |d\tau\Big).
\end{split}
\end{equation}
While $\dot{\mathbf{f}}-\mathbf{D}^T\bg$ poses no problems, the dependence on the boundary perturbations $\dot \brho$ and $\dot \bsigma+\mathbf C^T_1 \bg$ needs to be treated in a different way. 

(b) If we first assume that $\dot \brho$ and $\dot \bsigma+\mathbf C^T_1 \bg$ are zero, then using the Cauchy-Schwarz inequality  and the Gronwall inequality we obtain the estimate
\[
H(t) \leq e \left(e^2 H(0) + e^4\frac t2 \int_0^t|\dot{\mathbf{f}}(\tau)-\mathbf{D}^T\bg(\tau)|^2d\tau\right).
\]

(c) By linearity it remains to study the case $\bu(0) = \dot \bu(0) = 0$ and $\dot{\mathbf{f}}-\mathbf{D}^T\bg = 0$.  We consider the Laplace transformed equations:
\begin{align*}
  s^2\widehat \bu &= -\mathbf{D}^T \mathbf{D} \widehat \bu+\mathbf{D}^T\mathbf{C}_1 \widehat \bpsi -s \mathbf{C}_0 \widehat\bvarphi\\
 \mathbf{B}(s)
 \begin{pmatrix}
   s\widehat\bvarphi\\s\widehat\bpsi
 \end{pmatrix}
&= 
\begin{pmatrix}
  s\mathbf{C}_0^T \widehat \bu\\ \mathbf{C}_1^T (\mathbf{D} \widehat \bu-\mathbf{C}_1  \widehat \bpsi)
\end{pmatrix}
+
\begin{pmatrix}
  s\widehat  \brho\\  s\widehat \bsigma+\mathbf{C}_1^T\widehat \bg
\end{pmatrix}.
\end{align*}
We take the inner product of the first equation with $ s\widehat \bu$ and the second with $(s\widehat \bvarphi, s\widehat \bpsi)^T$ and sum up to obtain
\[
s|s\widehat \bu|^2 + \bar s|\mathbf{D}\widehat \bu - \mathbf{C}_1 \widehat \bpsi|^2 
+ \left\langle  \begin{pmatrix}
   s\widehat\bvarphi\\s\widehat\bpsi
 \end{pmatrix},
 \mathbf{B}(s)
 \begin{pmatrix}
   s\widehat\bvarphi\\s\widehat\bpsi
 \end{pmatrix}
\right\rangle_\Gamma
= \left\langle 
\begin{pmatrix}
   s\widehat\bvarphi\\s\widehat\bpsi
 \end{pmatrix}, 
\begin{pmatrix}
  s\widehat  \brho\\  s\widehat \bsigma+\mathbf{C}_1^T\widehat \bg
\end{pmatrix}
\right\rangle_\Gamma.
\]
Taking the real part, using the positivity of $\mathbf{B}(s)$ on the left-hand side  and the triangle inequality on the right-hand side we obtain
\[
\beta\min(1,|s|^{-2}) \Re s \left(|\widehat \bvarphi|^2+|\widehat \bpsi|^2\right)
\leq |\langle\widehat \bvarphi, s^2\widehat \brho \rangle_\Gamma|+ |\langle\widehat \bpsi, s^2\widehat \bsigma + s\mathbf{C}_1^T \widehat \bg\rangle_\Gamma|.
\]
Using the Cauchy-Schwarz inequality   we obtain for $\Re s > 1/t$
\[
|\widehat \bvarphi|^2+ |\widehat \bpsi|^2 \leq \beta^{-2}\max(t^2,t^6)
\left(  |s^2 \widehat \brho|^2+ |s^2 \widehat \bsigma+ s\mathbf{C}_1^T \widehat \bg|^2\right).
\]
With the Plancherel formula and causality we conclude
\[
\int_0^t \left(|\bvarphi(\tau)|^2 + |\bpsi(\tau)|^2\right)  d\tau
\leq (e/\beta)^2 \max(t^2,t^6)  \int_0^t  \left(|\ddot\brho(\tau)|^2 + |\ddot\bsigma(\tau)+\mathbf{C}_1^T \dot\bg(\tau)|^2 \right) d\tau.
\]
Using the Cauchy-Schwarz inequality in the last term of \eqref{eq:Henergy_ineq}
and
\[
\int_0^t |\brho(\tau)|^2 d\tau \leq \frac{t^4}{12} \int_0^t |\ddot \brho(\tau) |^2 d\tau
\]
gives the result if $H(0) = 0$ and $\dot{\mathbf{f}} -\mathbf{D}^T \mathbf g= 0$.

(d)  As in the previous proof we conclude to the stated result using linearity and the estimates in (b) and (c).
\qed \end{proof}

\subsection{Boundary functions}

\begin{lemma}\label{lemma:semiPhi}
The boundary functions of \eqref{eq:semi_disc_perturb} are bounded as
\[
\begin{split}  
\int_0^t (|\bvarphi(\tau)|^2 &+ |\bpsi(\tau)|^2)  d\tau
\\ &\leq C(\beta) \max(t^2,t^6)  \int_0^t  \left(|\dot{\mathbf{f}}(\tau)|^2+|\dot \bg(\tau)|^2+ |\ddot\brho(\tau)|^2 + |\ddot\bsigma(\tau)|^2 \right) d\tau,
\end{split}
\]  
for all $t > 0$. This estimate holds provided that $\mathbf{f}(0) = 0$, $\bg(0) = 0$, $\brho(0) = \dot \brho(0) = 0$, and $\bsigma(0) = \dot \bsigma(0) = 0$.
\end{lemma}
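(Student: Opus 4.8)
The plan is to argue entirely in the frequency domain. In contrast to Lemmas~\ref{lemma:semiE} and~\ref{lemma:semiH}, no time-domain Gronwall argument is needed here, because we only have to control the boundary functions and not an energy. I would work with vanishing initial data $\bu(0)=\bv(0)=0$, which is the situation for the error equations (consistently, no $E(0)$- or $H(0)$-type term appears in the assertion). By causality the solution on $[0,t]$ is unaffected by data beyond $t$, so after the frequency-domain estimate it suffices to invoke the Plancherel formula on the line $\Re s=\sigma=1/t$, exactly as at the end of the proof of Lemma~\ref{lemma:semiE}.

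First I would Laplace-transform \eqref{eq:semi_disc_perturb}, giving
\[
s\widehat\bu = -\mathbf D^T\widehat\bv-\mathbf C_0\widehat\bvarphi+\widehat{\mathbf f},\qquad
s\widehat\bv = \mathbf D\widehat\bu-\mathbf C_1\widehat\bpsi+\widehat\bg,
\]
together with $\mathbf B(s)(\widehat\bvarphi,\widehat\bpsi)^T=(\mathbf C_0^T\widehat\bu,\mathbf C_1^T\widehat\bv)^T+(\widehat\brho,\widehat\bsigma)^T$. Taking the inner product of the first equation with $\widehat\bu$, of the second with $\widehat\bv$, of the third with $(\widehat\bvarphi,\widehat\bpsi)^T$, summing and taking real parts, the interior--boundary coupling terms in $\mathbf C_0,\mathbf C_1$ and the terms in $\mathbf D$ cancel precisely as in Lemma~\ref{lemma:semiE}, leaving
\[
\Re s\,(|\widehat\bu|^2+|\widehat\bv|^2)+\Re\Big\langle\begin{pmatrix}\widehat\bvarphi\\\widehat\bpsi\end{pmatrix},\mathbf B(s)\begin{pmatrix}\widehat\bvarphi\\\widehat\bpsi\end{pmatrix}\Big\rangle_\Gamma=\Re\big((\widehat{\mathbf f},\widehat\bu)+(\widehat\bg,\widehat\bv)+\langle\widehat\bvarphi,\widehat\brho\rangle_\Gamma+\langle\widehat\bpsi,\widehat\bsigma\rangle_\Gamma\big).
\]

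Next I would bound the $\mathbf B(s)$-term from below by Lemma~\ref{lemma:calderon_pos} and estimate the four terms on the right by the Cauchy--Schwarz inequality, splitting each so that the $|\widehat\bu|^2,|\widehat\bv|^2$ contributions are absorbed into $\Re s\,(|\widehat\bu|^2+|\widehat\bv|^2)$ and the $|\widehat\bvarphi|^2,|\widehat\bpsi|^2$ contributions into $\beta\min(1,|s|^2)\tfrac{\Re s}{|s|^2}(|\widehat\bvarphi|^2+|\widehat\bpsi|^2)$. Discarding the absorbed non-negative terms and dividing by the positivity coefficient then yields, for $\Re s=\sigma$,
\[
|\widehat\bvarphi|^2+|\widehat\bpsi|^2\le \frac1\beta\,\frac{\max(|s|^2,1)}{(\Re s)^2}\big(|\widehat{\mathbf f}|^2+|\widehat\bg|^2\big)+\frac1{\beta^2}\,\frac{\max(|s|^4,1)}{(\Re s)^2}\big(|\widehat\brho|^2+|\widehat\bsigma|^2\big).
\]
Here the interior forcings $\mathbf f,\bg$ acquire only one power $|s|^2$ (from the single absorption into $\Re s\,|\widehat\bu|^2$ together with the factor $|s|^2/\min(1,|s|^2)$ produced on dividing by the positivity coefficient), whereas the boundary forcings $\brho,\bsigma$ acquire two powers $|s|^2$, because of the additional $|s|^{-2}$ weight of $\widehat\bvarphi,\widehat\bpsi$ in the Calderon lower bound. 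This asymmetry is the whole point of the estimate and is already present, for $\brho,\bsigma$, in part~(c) of Lemma~\ref{lemma:semiE}.

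The remaining, and main, technical step is the trading of these powers of $|s|$ for time derivatives. Using $|s|\ge\Re s=\sigma=1/t$ one has $\max(1,|s|^{-2})/(\Re s)^2\le\max(t^2,t^4)$ and $\max(1,|s|^{-4})/(\Re s)^2\le\max(t^2,t^6)$, hence
\[
\frac{\max(|s|^2,1)}{(\Re s)^2}\le \max(t^2,t^6)\,|s|^2,\qquad \frac{\max(|s|^4,1)}{(\Re s)^2}\le \max(t^2,t^6)\,|s|^4.
\]
Since $\mathbf f(0)=\bg(0)=0$ and $\brho(0)=\dot\brho(0)=\bsigma(0)=\dot\bsigma(0)=0$, the identities $s\widehat{\mathbf f}=\widehat{\dot{\mathbf f}}$, $s\widehat\bg=\widehat{\dot\bg}$, $s^2\widehat\brho=\widehat{\ddot\brho}$, $s^2\widehat\bsigma=\widehat{\ddot\bsigma}$ turn the previous display into
\[
|\widehat\bvarphi|^2+|\widehat\bpsi|^2\le C(\beta)\,\max(t^2,t^6)\big(|\widehat{\dot{\mathbf f}}|^2+|\widehat{\dot\bg}|^2+|\widehat{\ddot\brho}|^2+|\widehat{\ddot\bsigma}|^2\big).
\]
Integrating over $s\in\sigma+\mi\mathbb R$, applying the Plancherel formula, and using causality to restrict the time integrals to $[0,t]$ (with $\sigma=1/t$) gives the claim, the remaining constants being absorbed into $C(\beta)$. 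I expect the power-counting in the two displayed coefficient bounds --- respecting that $\mathbf f,\bg$ cost one derivative while $\brho,\bsigma$ cost two --- to be the only delicate part; everything else parallels the frequency-domain computation already carried out in Lemma~\ref{lemma:semiE}.
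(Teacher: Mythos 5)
Your proof is correct, but it takes a genuinely different route from the paper. The paper proves this lemma by a three-way case splitting that recycles earlier material: for the pure boundary perturbations $\brho,\bsigma$ it simply quotes the bound \eqref{eq:E_L2_phi} already established in part (c) of the proof of Lemma~\ref{lemma:semiE}; for $\mathbf{f}$ (together with initial data) it channels the estimate through the \emph{mechanical} energy inequality \eqref{eq:Henergy_ineq} of Lemma~\ref{lemma:semiH}, whose left-hand side happens to contain exactly $\beta c_t\int_0^t(|\bvarphi|^2+|\bpsi|^2)\,d\tau$; and only for the remaining case of a lone $\bg$ does it perform a fresh frequency-domain absorption, where the key point is that $\Re(\widehat\bv,\widehat\bg)$ is absorbed into $\Re s\,|\widehat\bv|^2$ at the cost of $\tfrac1{4\Re s}|\widehat\bg|^2$ --- precisely the mechanism you use. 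You instead treat all four forcings simultaneously in a single Laplace-transformed energy identity, and your power counting (one factor $|s|$ for $\mathbf{f},\bg$ from the single absorption into $\Re s(|\widehat\bu|^2+|\widehat\bv|^2)$, two factors for $\brho,\bsigma$ from dividing by the Calderon coefficient $\min(1,|s|^2)\Re s/|s|^2$) is exactly right and reproduces the paper's constants up to the factors absorbed in $C(\beta)$. The trade-offs: your argument is shorter and self-contained, but it needs $\bu(0)=\bv(0)=0$ explicitly, since otherwise the transformed equations acquire $\bu(0),\bv(0)$ terms; you correctly flag this, and it matches how the lemma is applied to the error equations (where the initial errors vanish by the choice of projected initial data). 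In the paper's route the hypothesis $\mathbf{f}(0)=0$ enters differently --- it makes $H(0)=\half|\dot\bu(0)|^2+\half|\mathbf{D}\bu(0)-\mathbf{C}_1\bpsi(0)|^2$ vanish in case (ii) --- whereas in yours it is used for the Laplace identity $s\widehat{\mathbf{f}}=\widehat{\dot{\mathbf{f}}}$; both uses are consistent with the stated hypotheses, and the final Plancherel-plus-causality step with $\sigma=1/t$ is identical in the two arguments.
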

\begin{proof}
  We separate the three cases (i) $\bu(0) = 0$, $\bv(0) = 0$, $\mathbf{f} = 0$ and $\bg = 0$, (ii) $\dot \brho = 0$, $\dot \bsigma = 0$ and $\bg = 0$, and (iii) all inhomogeneities and initial values vanish except for an arbitrary $\bg$.

In the case (i) an estimate of the temporal $L_2$ norms of $\bvarphi$ and $\bpsi$ is given in \eqref{eq:E_L2_phi}. In the case (ii) such an estimate follows from \eqref{eq:Henergy_ineq}. It remains to study the case (iii), which is done by an extension of part (c) of the proof of Lemma~\ref{lemma:semiE}.

We consider the Laplace transformed equations:
\begin{align*}
  s\widehat \bu &= -\mathbf{D}^T \widehat \bv - \mathbf{C}_0 \widehat\bvarphi\\
  s\widehat \bv &= \mathbf{D}\widehat \bu - \mathbf{C}_1 \widehat\bpsi+ \widehat \bg\\
 \mathbf{B}(s)
 \begin{pmatrix}
   \widehat\bvarphi\\\widehat\bpsi
 \end{pmatrix}
&= 
\begin{pmatrix}
  \mathbf{C}_0^T \widehat \bu\\ \mathbf{C}_1^T \widehat \bv
\end{pmatrix}
\end{align*}
We take the inner product of the first equation with $ \widehat \bu$, the second with $ \widehat \bv$, and the third with $(\widehat \bvarphi, \widehat \bpsi)^T$ and sum up to obtain
\[
s|\widehat \bu|^2 + s|\widehat \bv|^2 +\left \langle
\begin{pmatrix}
  \widehat \bvarphi\\\widehat \bpsi
\end{pmatrix}
, 
\mathbf{B}(s)
\begin{pmatrix}
  \widehat \bvarphi\\\widehat \bpsi
\end{pmatrix}
\right\rangle_\Gamma
= 
(\widehat \bv, \widehat \bg).
\]
Taking the real part and using Lemma~\ref{lemma:calderon_pos} we obtain
\[
 \Re s|\widehat \bv|^2+\beta \min(1,|s|^2)\frac{\Re s}{|s|^2} \left(|\widehat \bvarphi|^2+ |\widehat \bpsi|^2\right)
\leq \Re s |\widehat \bv|^2 + \frac1{4\Re s}|\widehat \bg|^2.
\]
Hence  we obtain for $\Re s > 1/t$
\[
|\widehat \bvarphi|^2+ |\widehat \bpsi|^2 \leq \beta^{-1}\max(t^2,t^6)
  |s \widehat \bg|^2.
\]
With the Plancherel formula and causality we conclude
\[
\int_0^t \left(|\bvarphi(\tau)|^2 + |\bpsi(\tau)|^2\right)  d\tau
\leq \beta^{-1}\max(t^2,t^6)  \int_0^t  |\dot \bg(\tau)|^2 d\tau.
\]
Combining the cases (i)--(iii) gives the result.
\qed \end{proof}

\section{Error bound for the spatial semidiscretization}
\label{sec:err-semi}

\subsection{Consistency errors}

We denote by $P^U_h$ and $P^V_h$  the $L_2(\Omega)$-orthogonal projections onto the finite element spaces $U_h$ and $V_h$, respectively, and by $P^{\Phi}_h$ and $P^{\Psi}_h$ the $L_2(\Gamma)$-orthogonal projections onto the boundary element spaces $\Phi_h$ and  $\Psi_h$, respectively. We omit the superscripts $U, V, \Phi, \Psi$ when they are clear from the context.

We consider the defects obtained when we insert the projected exact solution $(P_hu,P_hv,P_h\varphi,P_h\psi)$ into the variational formulation.  We obtain
\begin{equation}\label{eq:defects}
\begin{aligned}
&  ( P_h\dot u, w) = -\half(P_h v,\nabla w) + \half (\nabla\cdot P_h v,w) -\half\langle P_h \varphi,\gamma w\rangle_\Gamma +(f,w)\\
& \qquad \qquad \quad + \half (\nabla \cdot (v- P_h v), w)\\
&  (P_h\dot v, z) = -\half(P_h u,\nabla\cdot z) + \half (\nabla P_h u,z) +\half\langle P_h \psi,\gamma z\cdot n\rangle_\Gamma\\
& \qquad \qquad \quad + \half (\nabla (u- P_h u), z)\\
& \left\langle
  \begin{pmatrix}
    \xi\\\eta
  \end{pmatrix}, {B} (\partial_t) 
  \begin{pmatrix}
    P_h \varphi\\P_h \psi
  \end{pmatrix}
\right\rangle_\Gamma =\half\left \langle
\begin{pmatrix}
\xi \\ \eta 
\end{pmatrix},
\begin{pmatrix}
  \gamma P_h u\\ -\gamma P_h v \cdot n
\end{pmatrix}\right\rangle_\Gamma
\\
 & \qquad \qquad \quad -
\left\langle
  \begin{pmatrix}
    \xi\\\eta
  \end{pmatrix}, B(\partial_t) 
  \begin{pmatrix}
    \varphi-P_h \varphi\\\psi-P_h \psi
  \end{pmatrix}
\right\rangle_\Gamma\\
 & \qquad \qquad \quad +
\half\left \langle
\begin{pmatrix}
\xi \\ \eta 
\end{pmatrix},
\begin{pmatrix}
  \gamma (u-P_h u)\\ -\gamma (v-P_h v) \cdot n
\end{pmatrix}\right\rangle_\Gamma.
\end{aligned}
\end{equation}
The defects are estimated using the following lemmas and the trace inequalities $\|\gamma w\|_{H^{1/2}(\Gamma)} \leq C \|w\|_{H^1(\Omega)}$ and
$\|\gamma z \cdot n\|_{H^{-1/2}(\Gamma)} \leq C \|z\|_{H^1(\Omega)}$.

\begin{lemma}\label{lemma:PhH1}
  In the case of a quasi-uniform triangulation of $\Omega$, there exists a positive constant $C$ such that
\[
\|w-P_hw\|_{H^1(\Omega)} \leq C h |w|_{H^2(\Omega)}\ \quad
\text{for all } w \in H^2(\Omega).
\]
\end{lemma}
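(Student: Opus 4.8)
The plan is to reduce the $H^1$-error of the $L_2$-projection to the standard approximation and inverse properties of the finite element space $U_h$, using the best-approximation character of $P_h$ in $L_2(\Omega)$ together with an inverse inequality. Let $I_h : H^2(\Omega) \to U_h$ denote a quasi-interpolation operator onto $U_h$ (for instance the nodal interpolant, which is well defined since $H^2(\Omega) \hookrightarrow C(\overline{\Omega})$ in three space dimensions, or the Scott--Zhang operator), for which the standard local approximation estimates
\[
\|w - I_h w\|_{L_2(\Omega)} \leq C h^2 |w|_{H^2(\Omega)}, \qquad \|w - I_h w\|_{H^1(\Omega)} \leq C h |w|_{H^2(\Omega)}
\]
hold.

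First I would split the error by the triangle inequality,
\[
\|w - P_h w\|_{H^1(\Omega)} \leq \|w - I_h w\|_{H^1(\Omega)} + \|I_h w - P_h w\|_{H^1(\Omega)},
\]
so that the first term is already of the desired order $O\!\left(h\,|w|_{H^2(\Omega)}\right)$. The second term is a finite element function in $U_h$, which permits the use of the inverse inequality $\|v_h\|_{H^1(\Omega)} \leq C h^{-1}\|v_h\|_{L_2(\Omega)}$, valid for all $v_h \in U_h$ on a quasi-uniform mesh; this is precisely the point at which the quasi-uniformity hypothesis is used.

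Next I would estimate the $L_2$-norm of $I_h w - P_h w$. By the triangle inequality together with the defining best-approximation property of the $L_2$-projection, namely $\|w - P_h w\|_{L_2(\Omega)} \leq \|w - I_h w\|_{L_2(\Omega)}$ since $P_h w$ is the $L_2$-closest element of $U_h$ to $w$, I obtain
\[
\|I_h w - P_h w\|_{L_2(\Omega)} \leq \|I_h w - w\|_{L_2(\Omega)} + \|w - P_h w\|_{L_2(\Omega)} \leq 2\,\|w - I_h w\|_{L_2(\Omega)} \leq C h^2 |w|_{H^2(\Omega)}.
\]
Combining this with the inverse inequality gives $\|I_h w - P_h w\|_{H^1(\Omega)} \leq C h^{-1}\cdot C h^2 |w|_{H^2(\Omega)} = C h\,|w|_{H^2(\Omega)}$, and inserting this bound into the splitting yields the claim.

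The only nonroutine ingredient is the inverse inequality applied to the second term, which is where quasi-uniformity is indispensable: on a graded or locally refined mesh the global factor $h^{-1}$ would have to be replaced by the reciprocal of the smallest element diameter, which would in general destroy the optimal order. Everything else reduces to the textbook interpolation estimates quoted above and the elementary best-approximation characterisation of the $L_2$-projection $P_h$.
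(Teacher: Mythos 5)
Your proof is correct and follows essentially the same route as the paper: the splitting $w-P_hw=(w-I_hw)+(I_hw-P_hw)$, the $O(h)$ interpolation bound for the first term, the $O(h^2)$ $L_2$-bound for the second term, and an inverse inequality on the quasi-uniform mesh. You merely spell out the $L_2$-estimate (via the best-approximation property of $P_h$) that the paper leaves implicit, which is a welcome clarification but not a different argument.
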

\begin{proof}
  We denote by $I_h$ the finite element interpolation operator and write
\[
w-P_h w = (w-I_h w) + (I_hw-P_hw).
\]
The $H^1(\Omega)$ norm of the first term is of $O(h)$ by standard finite element theory.  The $L_2(\Omega)$ norm of the second term is $O(h^2)$ and hence the result follows using an inverse inequality.
\qed \end{proof}

\begin{lemma}\label{lemma:Bdefect}
  There exists a constant $C(t)$ growing at most polynomially with $t$ such that
\[
\begin{split}  
\int_0^t &\left\| B(\partial_t)
  \begin{pmatrix}
    (I-P_h)\varphi(\cdot, \tau)\\
    (I-P_h)\psi(\cdot, \tau)
  \end{pmatrix}
\right\|^2_{H^{1/2}(\Gamma) \times H^{-1/2}(\Gamma)}d\tau\\
&\leq C(t) h^2 \int_0^t \left(
\|\partial_t^2 \varphi(\cdot, \tau)\|^2_{H^{1/2}(\Gamma)}
+ \|\partial_t^2 \psi(\cdot, \tau)\|^2_{H^{3/2}(\Gamma)}
\right) d\tau,
\end{split}
\]
for any $t > 0$ and for all $\varphi \in C^2([0,t], H^{1/2}(\Gamma))$, $\psi \in C^2([0,t], H^{3/2}(\Gamma))$ with $\varphi(\cdot, 0) = \partial_t\varphi(\cdot, 0) = 0$ and $\psi(\cdot, 0) = \partial_t\psi(\cdot, 0) = 0$.
\end{lemma}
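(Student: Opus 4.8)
The plan is to pass to the Laplace domain, where the causal convolution $B(\partial_t)$ becomes multiplication by the symbol $B(s)$, to estimate there, and to return by Plancherel's formula. Since the $L_2(\Gamma)$-projections $P_h^\Phi,P_h^\Psi$ act only in the space variable, they commute with the Laplace transform in time, so the Laplace transform of $B(\partial_t)(I-P_h)(\varphi,\psi)^T$ equals $B(s)\,(I-P_h)(\mathscr L\varphi(s),\mathscr L\psi(s))^T$ on the line $\Re s=\sigma$.

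Two ingredients enter. First, the mapping bounds listed in Section~\ref{section:cald_Helm} give $\|B(s)\|\le C(\sigma)|s|^2$ as an operator from $H^{-1/2}(\Gamma)\times H^{1/2}(\Gamma)$ to $H^{1/2}(\Gamma)\times H^{-1/2}(\Gamma)$ for $\Re s\ge\sigma$, with $C(\sigma)$ polynomial in $\sigma^{-1}$; the growth is governed by the $sV(s)$ block, the remaining blocks $K(s),K^T(s),\tfrac1s W(s)$ contributing lower powers of $|s|$. Second, the approximation properties of the $L_2(\Gamma)$-orthogonal projections onto piecewise constants and piecewise linears on a quasi-uniform mesh yield $\|(I-P_h^\Phi)g\|_{H^{-1/2}(\Gamma)}\le Ch\|g\|_{H^{1/2}(\Gamma)}$ and $\|(I-P_h^\Psi)g\|_{H^{1/2}(\Gamma)}\le Ch\|g\|_{H^{3/2}(\Gamma)}$, whose Sobolev indices match exactly the norms on the right-hand side of the claim. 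Combining the two pointwise in $\omega=\Im s$ gives
\[
\left\|B(s)(I-P_h)\begin{pmatrix}\mathscr L\varphi(s)\\\mathscr L\psi(s)\end{pmatrix}\right\|\le C(\sigma)\,h\,|s|^2\Bigl(\|\mathscr L\varphi(s)\|_{H^{1/2}(\Gamma)}+\|\mathscr L\psi(s)\|_{H^{3/2}(\Gamma)}\Bigr).
\]

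The decisive bookkeeping is that the two surplus powers of $|s|$ are absorbed by the time derivatives on the right-hand side: because $\varphi(\cdot,0)=\partial_t\varphi(\cdot,0)=0$ and likewise for $\psi$, one has $s^2\mathscr L\varphi(s)=\mathscr L(\partial_t^2\varphi)(s)$, and correspondingly for $\psi$. Squaring, integrating over $s\in\sigma+\mi\mathbb R$, and applying Plancherel's formula on both sides (exactly as in Lemma~\ref{lemma:cont_Herglotz}) therefore produces a weighted $L_2$-in-time estimate on $[0,\infty)$ with weight $e^{-2\sigma t}$ and constant $C(\sigma)^2h^2$. To localise to $[0,t]$ I would extend $\varphi,\psi$ beyond $t$ to compactly supported $C^2$ functions preserving the vanishing initial data and with $\partial_\tau^2$-norms over $[0,\infty)$ controlled by those over $[0,t]$; by causality the convolution on $[0,t]$ is unaffected, while on $[0,t]$ the weight obeys $1\le e^2\,e^{-2\sigma\tau}$ once $\sigma=1/t$. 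With this choice $C(\sigma)^2=C(1/t)^2$ grows only polynomially in $t$, which yields the asserted $C(t)$.

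The main obstacle is this step of matching powers: one must verify that the $|s|^2$ operator growth is compensated by precisely two time derivatives in each entry while keeping the Sobolev orders consistent with the available projection estimates (so that $\varphi$ is measured in $H^{1/2}$ and $\psi$ in $H^{3/2}$), and one must track the $\sigma$-dependence of $C(\sigma)$ carefully so that the choice $\sigma=1/t$ turns the localisation constant into a polynomial in $t$ rather than something exponentially large.
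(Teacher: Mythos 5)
Your proposal is correct and follows essentially the same route as the paper's own proof: bound each block of $B(s)$ via the frequency-domain estimates of Section~\ref{section:cald_Helm} (with $sV(s)$ giving the dominant $C(\sigma)|s|^2$ growth), apply the boundary-element projection estimates in the matching Sobolev norms to gain the factor $h$, and conclude by Plancherel's formula and causality with $s^2\mathscr{L}\varphi=\mathscr{L}(\partial_t^2\varphi)$ absorbing the two powers of $|s|$. You merely spell out the localisation step ($\sigma=1/t$, causal extension beyond $t$) that the paper leaves implicit, and this bookkeeping is carried out correctly.
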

\begin{proof}
  We first investigate the action of the blocks of $B(s)$ on the projection errors.  By the bounds given in Section~\ref{section:cald_Helm} and by the standard approximation estimates for boundary element spaces we obtain for $\Re s \geq \sigma > 0$
\begin{align*}  
\|s V(s) (I-P_h) \varphi\|_{H^{1/2}(\Gamma)} &\leq C(\sigma) |s|^2 \|\varphi-P_h \varphi\|_{H^{-1/2}(\Gamma)} \\ &\leq C|s|^2 h \|\varphi\|_{H^{1/2}(\Gamma)}.
\end{align*} 
Similar bounds hold for the other blocks, so that
\[
\left\|B(s) 
  \begin{pmatrix}
    (I-P_h)\varphi\\
    (I-P_h)\psi
  \end{pmatrix}
\right\|_{H^{1/2}(\Gamma) \times H^{-1/2}(\Gamma)}
\leq C(\sigma) |s|^2 h \left( \|\varphi\|_{H^{1/2}(\Gamma)}+ \|\psi\|_{H^{3/2}(\Gamma)} \right).
\]
The result now follows by Plancherel's formula and causality.
\qed \end{proof}

With the above two lemmas, the consistency errors have been estimated.
\subsection{Error bound}
Combining the previous lemmas we obtain the following result.

\begin{theorem}
  Assume that the initial values $u(\cdot, 0)$ and $v(\cdot,0)$ have their support in $\Omega$. Let the initial values for the semi-discretization be chosen as $u_h(0) = P_h u(0)$ and $v_h(0) = P_h v (0)$, where $P_h$ denotes the $L_2(\Omega)$-orthogonal projection onto the finite element spaces. If we assume that the solution of the wave equation  \eqref{eq:wave} is sufficiently smooth, then the error of the FEM-BEM semi-discretization \eqref{eq:semi_disc} is bounded by
\[
\begin{split}  
&\|u_h(t)-u(t)\|_{L_2(\Omega)} + \|v_h(t)-v(t)\|_{L_2(\Omega)^3}\\ &+
\left(\int_0^t \|\varphi_h(\tau)-\varphi(\tau)\|^2_{H^{-1/2}(\Gamma)}+ \|\psi_h(\tau)-\psi(\tau)\|^2_{H^{1/2}(\Gamma)}d\tau\right)^{1/2}
\leq C(t) h,
\end{split}
\]
where the constant $C(t)$ grows at most polynomially with $t$.
\end{theorem}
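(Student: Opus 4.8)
The plan is to compare the semi-discrete solution with the $L_2$-projection of the exact solution onto the discrete spaces, and to drive the resulting error equations through the stability estimates of Section~\ref{sec:stab-semidiscrete}. First I would introduce the projected errors $e_u = u_h - P_h u$, $e_v = v_h - P_h v$, $e_\varphi = \varphi_h - P_h\varphi$, $e_\psi = \psi_h - P_h\psi$, all lying in the respective discrete spaces. Subtracting the defect identities \eqref{eq:defects} from the semi-discrete equations \eqref{eq:semi_disc} and passing to nodal coordinates (using the $L_2$- and Sobolev-orthonormal bases of Section~\ref{subsec:stab-semidiscrete-setting}), these errors satisfy a system of exactly the perturbed form \eqref{eq:semi_disc_perturb}. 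The inhomogeneities $\mathbf{f}, \bg, \brho, \bsigma$ are the Riesz representatives of the defect terms: $\mathbf{f}$ and $\bg$ stem from the interior consistency terms $\half(\nabla\cdot(v - P_h v), w)$ and $\half(\nabla(u - P_h u), z)$, while $\brho$ and $\bsigma$ collect the boundary defect $-B(\partial_t)\bigl((I-P_h)\varphi, (I-P_h)\psi\bigr)$ together with the trace defects $\half\gamma(u - P_h u)$ and $-\half\gamma(v - P_h v)\cdot n$.

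Next I would apply the stability lemmas. Lemma~\ref{lemma:semiE} bounds the field energy $E = \half|e_u|^2 + \half|e_v|^2$ and hence $\|u_h - P_h u\|_{L_2(\Omega)}$ and $\|v_h - P_h v\|_{L_2(\Omega)^3}$, while Lemma~\ref{lemma:semiPhi} bounds $\int_0^t(\|e_\varphi\|^2_{H^{-1/2}(\Gamma)} + \|e_\psi\|^2_{H^{1/2}(\Gamma)})\,d\tau$, where the orthonormality lets me identify Euclidean and Sobolev norms. It then remains to insert the consistency bounds. The interior perturbations satisfy $|\mathbf{f}(\tau)|, |\bg(\tau)| \le Ch$, and likewise for their time derivatives, by Lemma~\ref{lemma:PhH1} applied to $v, u$ and their time derivatives (which are as smooth as assumed). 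The trace defects in $\brho, \bsigma$ are bounded by $Ch$ via the stated trace inequalities together with Lemma~\ref{lemma:PhH1}, and the convolution defect, after taking the time derivatives demanded by the stability lemmas, is controlled by Lemma~\ref{lemma:Bdefect}, yielding $O(h)$ for the time-integrated quantities $\int_0^t(|\ddot\brho|^2 + |\ddot\bsigma|^2)\,d\tau$.

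Finally, I would pass from the projected error to the true error by the triangle inequality, e.g.\ $\|u_h - u\|_{L_2(\Omega)} \le \|e_u\|_{L_2(\Omega)} + \|(I-P_h)u\|_{L_2(\Omega)}$, and estimate the projection errors $(I-P_h)u, (I-P_h)v$ in $L_2(\Omega)$ and $(I-P_h)\varphi, (I-P_h)\psi$ in $H^{-1/2}(\Gamma)$ and $H^{1/2}(\Gamma)$ by the standard $O(h)$ approximation properties of the finite and boundary element spaces. Summing these contributions gives the claimed bound with $C(t)$ polynomial in $t$. Throughout I would also check that the vanishing-initial-derivative hypotheses of Lemmas~\ref{lemma:semiE}, \ref{lemma:semiPhi}, and \ref{lemma:Bdefect} are met, which follows from the choice $u_h(0) = P_h u(0)$, $v_h(0) = P_h v(0)$ and the compact support of the data in $\Omega$.

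\textbf{The main obstacle.} The delicate point is the nonlocal-in-time boundary defect. The stability estimates require bounds on the second time derivatives $\ddot\brho, \ddot\bsigma$, but $\brho$ and $\bsigma$ contain the convolution $B(\partial_t)\bigl((I-P_h)\varphi, (I-P_h)\psi\bigr)$, so their time derivatives entangle the spatial projection error with the temporal convolution operator, whose frequency symbol grows like $|s|^2$. Controlling this requires precisely the frequency-domain mapping bounds on $V(s), K(s), K^T(s), W(s)$ and the Plancherel/causality argument underlying Lemma~\ref{lemma:Bdefect}, together with enough temporal smoothness of $\varphi, \psi$ (hence of the exact solution) so that all the hypotheses of the stability and consistency lemmas are satisfied simultaneously.
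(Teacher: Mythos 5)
Your proposal is correct and follows essentially the same route as the paper: it compares the discrete solution with the $L_2$-projected exact solution, identifies the defects $f_h$, $g_h$, $(\rho_h,\sigma_h)$ exactly as in \eqref{eq:defects}, invokes Lemmas~\ref{lemma:semiE} and \ref{lemma:semiPhi} for stability and Lemmas~\ref{lemma:PhH1} and \ref{lemma:Bdefect} for consistency, and concludes by the triangle inequality with the standard projection bounds. Your closing remark about the nonlocal boundary defect is precisely the point the paper handles via Lemma~\ref{lemma:Bdefect} (the convolution commutes with $\partial_t^2$, shifting the smoothness demand onto $\partial_t^2\varphi$, $\partial_t^2\psi$), so nothing is missing.
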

\begin{proof}
We apply the stability lemmas to the differences $u_h-P_hu$, $v_h-P_h v$, $\varphi_h-P_h \varphi$, and $\psi_h - P_h\psi$ and denote the defects in \eqref{eq:defects} by
\[
f_h = \half \nabla \cdot (v-P_h v), \qquad
g_h = \half \nabla  (u-P_h u)
\]
and
\[
\begin{pmatrix}
  \rho_h\\\sigma_h
\end{pmatrix}
= - B(\partial_t) 
\begin{pmatrix}
  \varphi - P_h \varphi\\
\psi - P_h \psi
\end{pmatrix}
+ \half
\begin{pmatrix}
  \gamma (u-P_h u)\\ -\gamma(v-P_h v) \cdot n
\end{pmatrix}.
\]
Translating Lemma~\ref{lemma:semiE} into the functional analytic setting gives the estimate 
\[
\begin{split}
  \|u_h(t)&-P_h u(t)\|_{L_2(\Omega)}^2 +   \|v_h(t)-P_h v(t)\|_{L_2(\Omega)^3}^2
\\ &\leq C(\beta)\Big( t \int_0^t \left( \|f_h(\cdot,\tau)\|^2_{L_2(\Omega)}+\|g_h(\cdot,\tau)\|^2_{L_2(\Omega)^3}\right) d\tau \\ &+ \max(t^2,t^6) \int_0^t \left(
\|\ddot \rho_h(\cdot, \tau)\|^2_{H^{1/2}(\Gamma)}+\|\ddot \sigma_h(\cdot, \tau)\|^2_{H^{-1/2}(\Gamma)}
\right)d\tau\Big).
\end{split}
\]
Similarly Lemma~\ref{lemma:semiPhi} translates into
\[
\begin{split}  
\int_0^t& \left(\|\varphi_h(\cdot, \tau)-P_h\varphi(\cdot,\tau)\|^2_{H^{-1/2}(\Gamma)} + \|\psi_h(\cdot, \tau)-P_h\psi(\cdot,\tau)\|^2_{H^{1/2}(\Gamma)} \right)d\tau
\\ \leq & C(\beta) \max(t^2,t^6)\Bigg( \int_0^t \left(\|\partial_t f_h(\cdot,\tau)\|^2_{L_2(\Omega)} + \|\partial_t g_h(\cdot,\tau)\|^2_{L_2(\Omega)^3}\right)d\tau
\\ &+\int_0^t \left( \|\partial^2_t \rho_h(\cdot,\tau)\|^2_{H^{1/2}(\Gamma)} + \|\partial^2_t \sigma_h(\cdot,\tau)\|^2_{H^{-1/2}(\Gamma)}\right)d\tau\Bigg).
\end{split}
\]
The conditions on the vanishing initial values required in Lemma~\ref{lemma:semiE} and Lemma~\ref{lemma:semiPhi} are satisfied because we assumed that the initial data of the wave equation have their support in $\Omega$ and because we chose the initial values of the space discretization as the appropriate projections of the initial data.

Using  the estimates of Lemma~\ref{lemma:PhH1} and Lemma~\ref{lemma:Bdefect} yields the result.
\qed \end{proof}

We remark that higher-degree finite elements and boundary elements yield correspondingly higher order, provided that the solution is sufficiently smooth.

\section{Stability of the full discretization}
\label{sec:stab-disc}

\subsection{Setting of the stability analysis}
\label{subsec:stab-discrete-setting}
In this section we study the stability of the fully discrete scheme  under the CFL condition
\begin{equation}\label{cfl}
\dt \|\mathbf D\| \le 1
\end{equation}
and the lower bound on the stabilization parameter
\begin{equation}\label{alpha}
\alpha \ge 1.
\end{equation}
We remark that the same kind of results can be obtained under the weaker CFL bound $\dt \|D\| \le \rho < 2$ for sufficiently large $\alpha$. The lower bound on $\alpha$ tends to infinity as $\rho\to 2$.

We consider the setting of Section~\ref{subsec:stab-semidiscrete-setting} and bound the Euclidean norms of the solutions of the perturbed discrete scheme
\begin{eqnarray*}
 \bv^{n+1/2} &=&  \bv^n +\half\dt \,\mathbf{D}u^n - \half \dt \,\mathbf{C}_1\bpsi^n +\half\dt\, \mathbf{g}^n
\\
 \bu^{n+1} &=& \bu^n - \dt \,\mathbf{D}^T \bv^{n+1/2} - \dt\, \mathbf{C}_0\bvarphi^{n+1/2}+\dt  \,\mathbf{f}^{n+1/2}
\\
\bv^{n+1} &=& \bv^{n+1/2} +\half\dt \,\mathbf{D}\bu^{n+1} - \half \dt \,\mathbf{C}_1\bpsi^{n+1} +\half\dt\, \mathbf{g}^{n+1}
\end{eqnarray*}
and
\begin{eqnarray*}
\biggl[  \mathbf{B}(\partial_t^\dt)  \begin{pmatrix}
   \bvarphi\\ \bar\bpsi
  \end{pmatrix}\biggr]^{n+1/2}
&=& 
\begin{pmatrix}
  \mathbf{C}_0^T \bar \bu^{n+1/2} \\ \mathbf{C}_1^T (\bv^{n+1/2}-\alpha \dt^2 \mathbf{C}_1\dot\bpsi^{n+1/2})
\end{pmatrix} + 
\begin{pmatrix}
\brho^{n+1/2}
\\
\bsigma^{n+1/2}
\end{pmatrix}
,
\end{eqnarray*}
where again $ \bar \bu^{n+1/2}= \half (\bu^{n+1}+\bu^n)$ and $ \bar \bpsi^{n+1/2}= \half (\bpsi^{n+1}+\bpsi^n)$, and $\dot\bpsi^{n+1/2}=(\bpsi^{n+1}-\bpsi^n)/\dt$.

We will proceed in parallel to Section~\ref{sec:stab-semidiscrete} and transfer the arguments from the semidiscrete to the discrete situation, concentrating on the extra difficulties.

\subsection{Discrete field energy}

\begin{lemma}\label{lemma:discrete-E}
Under conditions (\ref{cfl}) and (\ref{alpha}), the discrete field energy
\[
E^n=\half |\bu^n|^2 + \tfrac14 \bigl( |\bv^{n+1/2}|^2 + |\bv^{n-1/2}|^2 \bigr) 
\]
is bounded, at $t=n\dt$, by
\[
\begin{split}  
E^n \leq  C \Bigl(E^0\Bigr. &+ \frac t2 \, \dt\sum_{j=0}^n \left(|\mathbf{f}^{j+1/2}|^2  + |\bg^j|^2 \right)\\ &+  \Bigl.\max(t^2,t^6) \dt \sum_{j=0}^n \left( |(\partial_t^\dt)^2 \brho^{j+1/2}|^2 + |(\partial_t^\dt)^2 \bsigma^{j+1/2}|^2 \right)  \Bigr),
\end{split}
\]
where $C$ is independent of $h$, $\dt$, and $n$.
\end{lemma}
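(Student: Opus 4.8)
The plan is to mirror the semidiscrete proof of Lemma~\ref{lemma:semiE}, replacing the energy identity $\dot E+\langle(\bvarphi,\bpsi),B(\partial_t)(\bvarphi,\bpsi)\rangle_\Gamma=S$ by its leapfrog--convolution-quadrature analogue and replacing the time-continuous positivity of Lemma~\ref{lemma:4.2} by the time-discrete positivity of Lemma~\ref{lemma:cq_Herglotz}. The bound is linear in the data, so I would again isolate the interior perturbations $\mathbf f,\bg$ from the boundary perturbations $\brho,\bsigma$ and treat them as in parts (b)--(d) of Lemma~\ref{lemma:semiE}.

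First I would derive the discrete energy identity. Taking the Euclidean inner product of the $\bu$-update with $\bar\bu^{n+1/2}=\half(\bu^{n+1}+\bu^n)$, of the two velocity half-steps with the appropriate averages of $\bv^{n\pm1/2}$, and of the boundary equation with $(\bvarphi^{n+1/2},\bar\bpsi^{n+1/2})$, and then summing over $j=0,\dots,n-1$, I expect to obtain
\begin{equation*}
E^n+\dt\sum_{j=0}^{n-1}\Re\left\langle \begin{pmatrix}\bvarphi\\\bar\bpsi\end{pmatrix}^{j+1/2},\Bigl[\mathbf{B}(\partial_t^\dt)\begin{pmatrix}\bvarphi\\\bar\bpsi\end{pmatrix}\Bigr]^{j+1/2}\right\rangle_\Gamma=E^0+\dt\sum_{j=0}^{n-1}S^{j+1/2},
\end{equation*}
where $S^{j+1/2}$ collects the perturbation work $\langle\bar\bu,\mathbf f\rangle$, $\langle\bv,\bg\rangle$, $\langle\bvarphi,\brho\rangle_\Gamma$, $\langle\bar\bpsi,\bsigma\rangle_\Gamma$. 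Establishing this identity with the correct signs is the main obstacle, and it is precisely here that the two standing hypotheses enter. The CFL condition~\eqref{cfl} makes the quadratic form $E^n$ coercive uniformly in $h,\dt$: since $\bv^{j+1/2}-\bv^{j-1/2}=\dt(\mathbf D\bu^j-\mathbf C_1\bpsi^j+\bg^j)$, the staggering defect $\tfrac14|\bv^{j+1/2}-\bv^{j-1/2}|^2$ is controlled by $\tfrac14(\dt\|\mathbf D\|)^2|\bu^j|^2\le\tfrac14|\bu^j|^2$, so that $E^n$ stays comparable to $\half|\bu^n|^2+\half|\bar\bv^n|^2$, and the $\mathbf D$-coupling terms produced by the staggering telescope cleanly. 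The stabilization term $-\alpha\dt^2\mathbf C_1\dot\bpsi^{j+1/2}$, tested against $\bar\bpsi^{j+1/2}$, produces the telescoping contribution $\tfrac{\alpha\dt}2(|\mathbf C_1\bpsi^{j+1}|^2-|\mathbf C_1\bpsi^{j}|^2)$; summed, this furnishes a definite boundary term $\tfrac{\alpha\dt}2|\mathbf C_1\bpsi^n|^2$ that, under~\eqref{cfl} and~\eqref{alpha}, compensates the indefinite defect that the half-step evaluation of the velocity coupling introduces into the staggered-grid balance.

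With the identity in hand, I would insert the weight $\rho^{2j}$ with $\rho=e^{-\dt/t}+O(\dt^2)$ and apply Lemma~\ref{lemma:cq_Herglotz}, which transports the frequency-domain positivity of Lemma~\ref{lemma:calderon_pos} to the convolution-quadrature setting, bounding the coupling sum below by $\beta c_t\,\dt\sum_j\rho^{2j}(|(\partial_t^\dt)^{-1}\bvarphi|^2+|(\partial_t^\dt)^{-1}\bpsi|^2)$; this is the fully discrete counterpart of Lemma~\ref{lemma:4.2}. I would then split by linearity. For the interior perturbations (with $\brho=\bsigma=0$), a discrete Cauchy--Schwarz estimate together with a discrete Gronwall inequality yields the $E^0+\tfrac t2\,\dt\sum(|\mathbf f|^2+|\bg|^2)$ contribution. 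For the boundary perturbations (with $E^0=0$, $\mathbf f=\bg=0$), I would pass to the $z$-transform and use the frequency-domain positivity $\Re\langle\cdot,\mathbf B(s)\cdot\rangle\ge\beta\min(1,|s|^2)\tfrac{\Re s}{|s|^2}|\cdot|^2$ with $s=\delta(\zeta)/\dt$ for $|\zeta|\le\rho$, combined with the discrete Parseval identity of Lemma~\ref{lemma:disc_Herglotz}, to obtain the fully discrete analogue of~\eqref{eq:E_L2_phi}, namely a bound of $\dt\sum(|\bvarphi|^2+|\bpsi|^2)$ by $\max(t^2,t^6)\,\dt\sum(|(\partial_t^\dt)^2\brho|^2+|(\partial_t^\dt)^2\bsigma|^2)$.

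Finally, I would feed this back into the energy identity by Cauchy--Schwarz on the $\langle\bvarphi,\brho\rangle_\Gamma$, $\langle\bar\bpsi,\bsigma\rangle_\Gamma$ terms, using a discrete summation-by-parts bound $\dt\sum_j|\brho^{j+1/2}|^2\le C\,t^4\,\dt\sum_j|(\partial_t^\dt)^2\brho^{j+1/2}|^2$ — the discrete counterpart of the estimate~\eqref{eq:sec6a} — and combine the two cases by linearity with $|\bu|^2\le2(|\bu_\Omega|^2+|\bu_\Gamma|^2)$. All the difficulties beyond the semidiscrete setting are concentrated in the first step: verifying that the staggered-grid balance closes with exactly the stated $E^n$, that~\eqref{cfl} furnishes its coercivity and the telescoping of the $\mathbf D$-coupling, that~\eqref{alpha} lets the stabilization dominate the half-step defect, and that the image region $\{\delta(\zeta)/\dt:|\zeta|\le\rho\}$ lies in the half-plane $\Re s\ge\sigma$ where Lemma~\ref{lemma:calderon_pos} applies.
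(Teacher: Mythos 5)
Your outline follows the paper's proof route step for step: a discrete energy identity obtained by testing with averaged quantities, positivity of $\mathbf{B}(\partial_t^\dt)$ via Lemma~\ref{lemma:cq_Herglotz}, a splitting by linearity with discrete Gronwall for the interior data, a generating-function (frequency-domain) energy argument using Lemma~\ref{lemma:calderon_pos} and Parseval for the boundary data, and feedback through Cauchy--Schwarz together with a discrete analogue of \eqref{eq:sec6a}. However, the identity you ``expect to obtain'' is false as stated, and this is exactly where the real work lies. The telescoping does \emph{not} close with $E^n$ itself; it closes with the modified energy
\begin{equation*}
\widetilde E^n \;=\; E^n \;-\; \tfrac14\,\dt^2\,(\dot\bv^n,\,\mathbf{D}\bu^n-\mathbf{C}_1\bpsi^n)\;+\;\tfrac{\alpha}{2}\,\dt^2\,|\mathbf{C}_1\bpsi^n|^2,
\qquad \dot\bv^n=(\bv^{n+1/2}-\bv^{n-1/2})/\dt,
\end{equation*}
in which the indefinite bilinear cross term is the ``half-step defect'' you allude to: the $\mathbf{D}$-coupling terms do not cancel, they telescope \emph{into} $\widetilde E^n$ and survive there. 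Conditions \eqref{cfl} and \eqref{alpha} are then used to prove the coercivity bound \eqref{Emod-E}, i.e.\ $\widetilde E^n\ge \tfrac12 E^n + \half(\alpha-1)\dt^2|\mathbf{C}_1\bpsi^n|^2 - \tfrac18\dt^2|\bg^n|^2$ --- not to make $E^n$ coercive, which is vacuous since $E^n$ is a sum of squares. Your coercivity discussion (controlling $\tfrac14|\bv^{j+1/2}-\bv^{j-1/2}|^2$ by the CFL bound) conflates these points; the whole remaining argument must be run through the inequality between $E^n$ and $\widetilde E^n$ rather than through an exact identity.

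Second, your closing claim that ``all the difficulties beyond the semidiscrete setting are concentrated in the first step'' is not accurate: the same indefinite term reappears in the generating-function argument for the boundary-perturbation case. In the semidiscrete part (c) of Lemma~\ref{lemma:semiE} the interior terms $\Re s\,(|\widehat\bu|^2+|\widehat\bv|^2)\ge0$ simply drop out, but the transformed leapfrog system produces the cross term $\half\Re\bigl((\zeta^{-1}-\overline\zeta)\,\widehat\bv^*(\mathbf{D}\widehat\bu-\mathbf{C}_1\widehat\bpsi)\bigr)$, which the paper eliminates by substituting the $\widehat\bv$-equation, rewriting it as $\frac{|\zeta^{-1}|^2-1}{2\dt}\,\Re\frac{\overline\zeta}{1-\overline\zeta}\,\dt^2\,|\mathbf{D}\widehat\bu-\mathbf{C}_1\widehat\bpsi|^2$, and absorbing it via $\Re\frac{\overline\zeta}{1-\overline\zeta}\ge-\half$ for $|\zeta|<1$ together with \eqref{cfl} and \eqref{alpha}. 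So the CFL condition and the stabilization parameter are needed a \emph{second} time, in the frequency domain, before Lemma~\ref{lemma:calderon_pos} can be invoked to reach the discrete analogue \eqref{eq:E_L2_phi-disc} of \eqref{eq:E_L2_phi}. With these two repairs --- working with $\widetilde E^n$ instead of a clean identity for $E^n$, and redoing the absorption of the leapfrog cross term in the $\zeta$-transformed system --- your plan coincides with the paper's proof.
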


Since $\bv^n=\tfrac12(\bv^{n+1/2}+\bv^{n-1/2})$, this result also yields a bound of $|\bv^n|^2$ of the same type.

\begin{proof} (a) The recursion for $\bv$ is conveniently expressed in the midpoint values $\bv^{n+1/2}$ only:
$$
\bv^{n+1/2} = \bv^{n-1/2} + \dt \,\mathbf{D}\bu^n -  \dt \,\mathbf{C}_1\bpsi^n +  \dt \, \bg^n.
$$
We take the inner product with 
%half times 
$\half \bar  \bv^n = \tfrac14 (\bv^{n+1/2} + \bv^{n-1/2})=\half \bv^n$ in this equation, with $\bar \bu^{n+1/2}$ in the equation for $\bu^{n+1}$, 
with half times $\bar \bv^{n+1}$ in the equation for $\bv^{n+1}$, and with
$(\bvarphi^{n+1/2},\bar\bpsi^{n+1/2})$ in the boundary equation. We sum up the resulting four equations to obtain
\begin{eqnarray*}
&&\tfrac14 |\bv^{n+3/2}|^2 -\tfrac14 |\bv^{n-1/2}|^2 +\half |\bu^{n+1}|^2 - \half |\bu^{n}|^2 
\\
&&-\ \half \dt (\bar \bv^n, \mathbf{D}\bu^n-\mathbf{C}_1\bpsi^n) - \half \dt (\bar \bv^{n+1}, \mathbf{D}\bu^{n+1}-\mathbf{C}_1\bpsi^{n+1})
\\
&& 
\quad +\: \dt (\bv^{n+1/2}, \mathbf{D} \bar \bu^{n+1/2}-\mathbf{C}_1\bar\bpsi^{n+1/2}) 
+\alpha\dt^3(\mathbf{C}_1\dot\bpsi^{n+1/2},\mathbf{C}_1\bar\bpsi^{n+1/2})
\\
 && +\  \dt
 \left\langle 
  \begin{pmatrix}
   \bvarphi^{n+1/2}\\ \bar\bpsi^{n+1/2}
  \end{pmatrix},
 \biggl[  \mathbf{B}(\partial_t^\dt)  
 \begin{pmatrix}
   \bvarphi\\ \bar\bpsi
  \end{pmatrix}
  \biggr]^{n+1/2}
\right\rangle
 \\
 && = \half\dt (\bar \bv^n, \bg^n) +  \half\dt (\bar \bv^{n+1}, \bg^{n+1}) + \dt (\bar \bu^{n+1/2},\mathbf{f}^{n+1/2} ) 
 \\
 &&+\ \dt \langle \bvarphi^{n+1/2},\brho^{n+1/2}\rangle + 
  \Delta t   \langle \bsigma^{n+1/2}, \bar\bpsi^{n+1/2} \rangle.
\end{eqnarray*}
Here we note that on setting 
$
\dot \bv^n = (\bv^{n+1/2}-\bv^{n-1/2})/{\dt}
$
we have
\begin{eqnarray*}
&&\half \dt (\bar \bv^n, \mathbf{D}\bu^n-\mathbf{C}_1\bpsi^n) + \half \dt (\bar \bv^{n+1}, \mathbf{D}\bu^{n+1}-\mathbf{C}_1\bpsi^{n+1})
\\
&& 
\qquad -\: \dt (\bv^{n+1/2}, \mathbf{D} \bar \bu^{n+1/2}-\mathbf{C}_1\bar\bpsi^{n+1/2}) 
\\
&& =
\tfrac14 \dt^2 (\dot \bv^{n+1},\mathbf{D}\bu^{n+1}-\mathbf{C}_1\bpsi^{n+1}) - \tfrac14 \dt^2 (\dot \bv^{n},\mathbf{D}\bu^{n}-\mathbf{C}_1\bpsi^{n}) 
\end{eqnarray*}
and
$$
\dt (\mathbf{C}_1\dot\bpsi^{n+1/2},\mathbf{C}_1\bar\bpsi^{n+1/2})= \half|\mathbf{C}_1\bpsi^{n+1}|^2 - \half |\mathbf{C}_1\bpsi^n|^2.
$$
Hence the first three lines in the above equation can be written as $\widetilde E^{n+1}-\widetilde E^n$
with the modified discrete field energy
\begin{align*}
\widetilde E^n =& 
\half |\bu^n|^2 + \tfrac14 \bigl( |\bv^{n+1/2}|^2 + |\bv^{n-1/2}|^2 \bigr) 
\\
& - 
\tfrac14 \dt^2 (\dot \bv^{n},\mathbf{D}\bu^{n}-\mathbf{C}_1\bpsi^{n})+\alpha \dt^2 \half |\mathbf{C}_1\bpsi^n|^2.
\end{align*}
Under the CFL condition (\ref{cfl}) we obtain by estimating
\begin{eqnarray*}
&& (\dot \bv^n, \mathbf{D}\bu^n-\mathbf{C}_1\bpsi^n) =|\mathbf{D}\bu^n-\mathbf{C}_1\bpsi^n|^2 +
 ( \bg^n, \mathbf{D}\bu^n-\mathbf{C}_1\bpsi^n) 
 \\
 && \quad\le 2 |\mathbf{D}\bu^n|^2 + 2|\mathbf{C}_1\bpsi^n|^2 + \tfrac1{2}|\bg^n|^2
\end{eqnarray*} 
that the modified discrete energy is bounded from below by
\begin{equation}\label{Emod-E}
\widetilde E^n \ge\tfrac14 |\bu^n|^2 + \tfrac14 \bigl( |\bv^{n+1/2}|^2 + |\bv^{n-1/2}|^2 \bigr) +\half (\alpha-1)\dt^2 |\mathbf{C}_1\bpsi^n|^2-
\tfrac18 {\dt^2}|\bg^n|^2.
\end{equation}
Note that the term with $\bpsi^n$ is non-negative for 
$\alpha\ge 1$.

We sum from $n=0$ to $m$ and note that by the positivity property of 
$\mathbf{B}(s)$ from Lemma~\ref{lemma:calderon_pos} and by 
Lemma~\ref{lemma:cq_Herglotz}, for $m\dt\le T$,
\begin{eqnarray*}
\sum_{n=0}^m
&&\left\langle 
 \begin{pmatrix}
   \bvarphi^{n+1/2}\\ \bar\bpsi^{n+1/2}
  \end{pmatrix},
 \biggl[  \mathbf{B}(\partial_t^\dt)  
 \begin{pmatrix}
   \bvarphi\\ \bar\bpsi
  \end{pmatrix}
  \biggr]^{n+1/2}
\right\rangle
\\
&&\ge \frac \beta{2eT} 
\dt \sum_{n=0}^{m-1} \Bigl( |(\partial_t^\dt)^{-1}\bvarphi^{n+1/2}|^2 + |(\partial_t^\dt)^{-1}\bar\bpsi^{n+1/2}|^2 \Bigr) .
\end{eqnarray*}
We then have
\begin{eqnarray}
\nonumber
\widetilde E^m- \widetilde E_0 &+&\frac \beta{2eT} 
\dt \sum_{n=0}^{m-1} \Bigl( |(\partial_t^\dt)^{-1}\bvarphi^{n+1/2}|^2 + |(\partial_t^\dt)^{-1}\bar\bpsi^{n+1/2}|^2 \Bigr)
\\
\label{E-mod-est}
&\le& \dt \sum_{n=0}^m \! '' \, |\bar \bv^{n}|\cdot |\bg^n| +\dt  \sum_{n=0}^{m-1} |\bar \bu^{n+1/2}|\cdot |\mathbf{f}^{n+1/2}| 
\\   \nonumber
&&+\  
\dt \sum_{n=0}^{m-1} \bigl(| \bvarphi^{n+1/2}|\cdot |\brho^{n+1/2}| +
 |\bar \bpsi^{n+1/2}|\cdot |\bsigma^{n+1/2}|\bigr),
\end{eqnarray}
where the double prime on the first sum indicates that the first and last term are taken with the factor $\half$. 
%The right-hand side is further estimated using the Cauchy-Schwarz inequality.
%We note that
%$$ (\dot \bv^n, D\bu^n-\mathbf{C}_1\bpsi^n) = |D\bu^n-\mathbf{C}_1\bpsi^n|^2 +
% ( g^n, D\bu^n-\mathbf{C}_1\bpsi^n).
%$$

(b) If we first assume that all $\brho^n$ and $\bsigma^n$ are zero, then using the Cauchy-Schwarz inequality and Young's inequality,
and finally the discrete Gronwall inequality, we obtain the estimate at $t=n\dt$,
\[
\widetilde E^n \leq e \left(\widetilde E^0+ \frac t2 \, \dt\sum_{j=0}^n \left(|\mathbf{f}^{j+1/2}|^2  + |\bg^{j}|^2 \right)\right).
\]

(c) By linearity it remains to study the case $E^0 = 0$ and all $\mathbf{f}^{n+1/2} = 0$ and $\mathbf{g}^n = 0$.  We consider the  equations for the generating power series
$$
\widehat \bu(\zeta) = \sum_{n=0}^\infty \bu^n \zeta^n, \quad
\widehat \bv(\zeta) = \sum_{n=0}^\infty \bv^{n+1/2} \zeta^n,
$$
where $n$ is an exponent only on $\zeta$ and a time superscript else. We have,
  omitting the argument $\zeta$ in $\widehat \bu$, $\widehat \bv$, etc., and letting
  $s=\delta(\zeta)/\dt$ for brevity,
\begin{eqnarray*}
\frac{\zeta^{-1}-1}\dt \, \widehat \bu &=& -\mathbf{D}^T \widehat \bv - \mathbf{C}_0 \widehat \bvarphi
\\
\frac{1-\zeta}\dt\, \widehat \bv &=& \mathbf{D}\widehat \bu - \mathbf{C}_1 \widehat\bpsi
\\
\mathbf{B}(s) 
\begin{pmatrix}
\widehat\bvarphi \\ \widehat {\bar\bpsi}
\end{pmatrix}
&=& 
\begin{pmatrix}
 \mathbf{C}_0 ^T \widehat{\bar \bu} \\
 \mathbf{C}_1 ^T \widehat \bv - \alpha \dt^2 \mathbf{C}_1 ^T\mathbf{C}_1 \frac{\zeta^{-1}-1}\dt\,\widehat\bpsi
\end{pmatrix}
+ 
\begin{pmatrix}
\widehat \brho \\ \widehat \bsigma
\end{pmatrix}.
\end{eqnarray*}
where
$$
\widehat{\bar \bu} =\half(\zeta^{-1}+1)\widehat{\bu} , \qquad
\widehat{\bar \bpsi} =\half(\zeta^{-1}+1)\widehat{\bpsi} .
$$
We now use the energy method on the system for the generating power series. We take the inner product with $\widehat{\bar \bu}$ in the first equation, with 
$\half(1+\zeta)\widehat \bv$ in the second equation, and with $\begin{pmatrix}
\widehat\bvarphi \\ \widehat{\bar \bpsi}
\end{pmatrix}$ in the third equation. We sum up and take the real part to obtain
\begin{eqnarray*}
&&
\frac{|\zeta^{-1}|^2-1}{2\dt}\, |\widehat \bu|^2 + \frac{1-|\zeta|^2}{2\dt} |\widehat \bv|^2 + \Re \left\langle 
\begin{pmatrix}
\widehat\bvarphi \\   \widehat{\bar \bpsi}
\end{pmatrix},\mathbf{B}(s) 
\begin{pmatrix}
\widehat\bvarphi \\ \widehat{\bar \bpsi}
\end{pmatrix}
\right\rangle_\Gamma
\\
&& 
+\half \Re \!\Bigl( (  \zeta^{-1} - \overline \zeta) \, \widehat\bv^*(\mathbf{D} \widehat\bu-\mathbf{C}_1 \widehat\bpsi)\Bigr) +
\half \alpha \dt (|\zeta^{-1}|^2-1) |\mathbf{C}_1 \widehat\bpsi|^2
\\
&& 
=\  \Re \langle \widehat\bvarphi, \widehat\brho \rangle_\Gamma +
\Re \langle\widehat{\bar \bpsi}, \widehat\bsigma \rangle_\Gamma.
\end{eqnarray*}
Using the equation for $\widehat \bv$, the first term in the second line can be rewritten as
$$
\frac{|\zeta^{-1}|^2-1}{2\dt}\,\Re \frac { \overline \zeta}{1-\overline\zeta}\, 
\dt^2\, |\mathbf{D} \widehat\bu-\mathbf{C}_1 \widehat\bpsi|^2 .
%\ge - 
%\frac{|\zeta^{-1}|^2-1}{2\dt}\ \half\,
%\dt^2\, |\mathbf{D} \widehat\bu-\mathbf{C}_1 \widehat\bpsi|^2
$$
Here we note that 
$$
\Re \frac { \overline \zeta}{1-\overline\zeta} \ge -\frac12, \qquad |\zeta|<1,
$$
and under condition (\ref{cfl}),
$$
\half \dt^2\, |\mathbf{D} \widehat\bu-\mathbf{C}_1 \widehat\bpsi|^2 \le
|\widehat\bu|^2 + \dt^2\, |\mathbf{C}_1 \widehat\bpsi|^2.
$$
With condition (\ref{alpha}) we thus obtain, for $|\zeta|<1$,
$$%\begin{eqnarray*}
%&&
\Re \left\langle 
\begin{pmatrix}
\widehat\bvarphi \\ \widehat{\bar \bpsi}
\end{pmatrix},\mathbf{B}(s) 
\begin{pmatrix}
\widehat\bvarphi \\ \widehat{\bar \bpsi}
\end{pmatrix}
\right\rangle_\Gamma
%\\[2mm]
%&& \qquad \qquad\qquad \qquad
\le  \Re \langle \widehat\bvarphi, \widehat\brho \rangle_\Gamma +
\Re \langle \widehat{\bar \bpsi}, \widehat\bsigma \rangle_\Gamma,
$$%\end{eqnarray*}
and Lemma~\ref{lemma:calderon_pos} gives us
$$
\beta\, \min(1,|s|^2) \frac{\Re s}{|s|^2} \left(|\widehat\bvarphi|^2+ | \widehat{\bar \bpsi}|^2\right) \le 
 | \langle \widehat\bvarphi, \widehat\brho \rangle_\Gamma |+
| \langle\widehat{\bar \bpsi}, \widehat\bsigma \rangle_\Gamma|.
$$
Using the Cauchy-Schwarz inequality 
\[
|\langle\widehat \bvarphi, \widehat \brho \rangle_\Gamma| 
\leq \tfrac\beta{2} \min(1,|s|^2)\Re s|s^{-1}\widehat \bvarphi|^2+ 
\tfrac1{2\beta} \bigl(\min(1,|s|^2)\Re s\bigr)^{-1}|s\widehat \brho|^2
\]
 we obtain for $\Re s \ge 1/t$
\[
|\widehat \bvarphi|^2+ | \widehat{\bar \bpsi}|^2 \leq \beta^{-2}\max(t^2,t^6)
\left(  |s^2 \widehat \brho|^2+ |s^2 \widehat \bsigma|^2\right).
\]
For $s=\delta(\zeta)/\dt$ we have $\Re s \ge 1/t$ if $|\zeta|=\rho$ with $\rho=e^{-\mu\dt}$ for a $\mu=1/t + O(\dt)$.
With the Parseval formula on the circle $|\zeta|=\rho$ and causality we conclude, at $t=n\dt$,
\begin{eqnarray}\label{eq:E_L2_phi-disc}
&& \sum_{j=0}^n \left( |\bvarphi^{j+1/2}|^2 +  |\bar\bpsi^{j+1/2}|^2 \right)
\\ \nonumber
&& \leq 2(e/\beta)^2 \max(t^2,t^6) \sum_{j=0}^n \left(|(\partial_t^\dt)^2\brho^{j+1/2}|^2 + |(\partial_t^\dt)^2\bsigma^{j+1/2}|^2 \right) .
\end{eqnarray}
We now return to the bound (\ref{E-mod-est}), where we use a Cauchy-Schwarz inequality on the right-hand side and insert the above bound to obtain
$$
\widetilde E^n \leq  C \max(t^2,t^6) \dt \sum_{j=0}^n \left( |(\partial_t^\dt)^2 \brho^{j+1/2}|^2 + |(\partial_t^\dt)^2 \bsigma^{j+1/2}|^2 \right)  
$$
(d) By linearity, combining the estimates of (b) and (c) and recalling (\ref{Emod-E}) gives the stated result.
\qed\end{proof}

\subsection{Discrete mechanical energy}
In the following we denote $\dot \bu^{n+1/2}=(\bu^{n+1}-\bu^n)/\dt$, $\dot{\mathbf{f}}^n=
(\mathbf{f}^{n+1/2}-\mathbf{f}^{n-1/2})/\dt$, etc., and as previously,
$\bar \bu^{n+1/2}=\tfrac12 (\bu^{n+1}+\bu^n)$, $\bar \bpsi^{n+1/2}=\tfrac12 (\bpsi^{n+1}+\bpsi^n)$.

\begin{lemma}
The discrete mechanical energy
\[
H^{n+1/2}= \tfrac12 |\dot \bu^{n+1/2}|^2  +
\half |\mathbf{D}\bar\bu^{n+1/2}-\mathbf{C}_1\bar\bpsi^{n+1/2}|^2 
\]
is bounded at $t=(n+1/2)\dt$ by
\[
\begin{split}  
&H^{n+1/2}\leq  C \biggl(H^{1/2}\Bigr. + \frac t2 \sum_{j=0}^n |\dot{\mathbf{f}}^j-\mathbf{D}^T \bg^j|^2 
\\ 
&\quad + \Bigl.\max(t^2,t^6)\sum_{j=0}^n \left( |(\partial_t^\dt)^2 \dot\brho^{j}|^2 + |(\partial_t^\dt)^2 (\dot\bsigma+\mathbf{C}^T_1 \bg)^{j}|^2 \right) \biggr),
\end{split}
\]
where $C$ is independent of $h$, $\dt$, and $n$.
\end{lemma}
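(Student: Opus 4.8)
The plan is to recognize $H^{n+1/2}$ as the \emph{discrete field energy of the time-differenced scheme} and thereby reduce the statement to (the proof of) Lemma~\ref{lemma:discrete-E}. Concretely, I would first pass to a discrete second-order formulation in exact analogy with the semidiscrete equation \eqref{eq:ddot-u}. Writing the leapfrog recursion for $\bv$ in midpoint form $\bv^{n+1/2}=\bv^{n-1/2}+\dt\,\mathbf D\bu^n-\dt\,\mathbf C_1\bpsi^n+\dt\,\bg^n$ and inserting it into the $\bu$-update gives $\dot\bu^{n+1/2}=-\mathbf D^T\bv^{n+1/2}-\mathbf C_0\bvarphi^{n+1/2}+\mathbf f^{n+1/2}$, and a further difference yields $\ddot\bu^{n}=-\mathbf D^T\mathbf D\bu^n+\mathbf D^T\mathbf C_1\bpsi^n-\mathbf C_0\dot\bvarphi^n+\dot{\mathbf f}^n-\mathbf D^T\bg^n$. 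Differencing the boundary equation commutes with the convolution quadrature operator $\mathbf B(\partial_t^\dt)$, since the latter acts through a fixed weight sequence (equivalently, in the generating-function picture, multiplication by $(1-\zeta)/\dt$ commutes with multiplication by the symbol). This reproduces the same boundary relation with $(\bvarphi,\bar\bpsi)$ replaced by $(\dot\bvarphi,\dot{\bar\bpsi})$ and the stabilization term $\alpha\dt^2\mathbf C_1\dot\bpsi^{n+1/2}$ becoming $\alpha\dt^2\mathbf C_1\ddot\bpsi^{n+1/2}$, i.e.\ again a first difference of the new velocity $\dot\bpsi$. The upshot is a system of \emph{exactly the same structural form} as the perturbed scheme of Section~\ref{subsec:stab-discrete-setting}, under the substitutions $\bu\mapsto\dot\bu$, the velocity variable $\bv\mapsto\mathbf D\bu-\mathbf C_1\bpsi$ (consistent with $\dot\bv=\mathbf D\bu-\mathbf C_1\bpsi+\bg$), $\mathbf f\mapsto\dot{\mathbf f}-\mathbf D^T\bg$, $\bg\mapsto\dot\bg$, $\brho\mapsto\dot\brho$, and $\bsigma\mapsto\dot\bsigma+\mathbf C_1^T\bg$, now living on the half-integer grid.

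With this identification I would replay the three-part argument of Lemma~\ref{lemma:discrete-E} verbatim. In part~(a) one takes the appropriate inner products against the midpoint and averaged differenced variables, sums the four equations, and uses the same telescoping identities to rewrite the conservative part as $\widetilde H^{n+1/2}-\widetilde H^{n-1/2}$ for a modified discrete mechanical energy carrying the stabilizing term $\half\alpha\dt^2|\mathbf C_1\dot\bpsi|^2$; the CFL condition \eqref{cfl} together with $\alpha\ge1$ from \eqref{alpha} then gives a lower bound $\widetilde H^{n+1/2}\ge c\,H^{n+1/2}-c'\dt^2|\dot\bg|^2$ exactly as in \eqref{Emod-E}, while Lemma~\ref{lemma:cq_Herglotz} (positivity of $\mathbf B(\partial_t^\dt)$ inherited from Lemma~\ref{lemma:calderon_pos}) discards the boundary bilinear form with the correct sign, yielding the discrete analogue of \eqref{E-mod-est}. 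Part~(b), assuming $\dot\brho=0$ and $\dot\bsigma+\mathbf C_1^T\bg=0$, is closed by Cauchy--Schwarz, Young's inequality, and the discrete Gronwall inequality, producing the $|\dot{\mathbf f}-\mathbf D^T\bg|^2$ contribution. Part~(c), with vanishing interior data, passes to the generating power series in $\zeta$, applies the frequency-domain energy method with $s=\delta(\zeta)/\dt$, invokes Lemma~\ref{lemma:calderon_pos} and Cauchy--Schwarz, and concludes via the Parseval formula on $|\zeta|=\rho$ with $\rho=e^{-\mu\dt}$, $\mu=1/t+O(\dt)$, giving the discrete $L^2$-in-time bound on $(\dot\bvarphi,\dot{\bar\bpsi})$ in terms of $(\partial_t^\dt)^2\dot\brho$ and $(\partial_t^\dt)^2(\dot\bsigma+\mathbf C_1^T\bg)$, cf.\ \eqref{eq:E_L2_phi-disc}. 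Superposing the two cases by linearity and recalling the lower bound on $\widetilde H^{n+1/2}$ then gives the claim.

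The main obstacle I anticipate is bookkeeping rather than conceptual: one must verify that differencing really reproduces the stabilized boundary coupling in identical form, so that both the CFL/$\alpha\ge1$ lower bound on the modified energy and the convolution-quadrature positivity apply to the \emph{differenced} densities. The delicate point is that $\bg$ enters in two places with opposite roles — as $-\mathbf D^T\bg$ in the momentum equation and as $+\mathbf C_1^T\bg$ inside the boundary data — so these two occurrences must be carried consistently through part~(c); this is precisely the extra term that already appeared in the semidiscrete Lemma~\ref{lemma:semiH}. A secondary technical requirement is to ensure that the differenced quantities $\dot\brho$, $\dot\bsigma+\mathbf C_1^T\bg$ and the differenced densities satisfy the vanishing-at-$0$ conditions needed for the generating-function and causality arguments, which is guaranteed by the discrete counterparts of the hypotheses $\brho(0)=\dot\brho(0)=0$, $\bsigma(0)=\dot\bsigma(0)=0$, $\bg(0)=0$ — exactly why these are imposed.
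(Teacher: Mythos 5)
Your proposal is correct and follows essentially the same route as the paper: the paper likewise eliminates $\bv$ to get the second-order recursion, differences the boundary equation (producing the perturbations $\dot\brho^n$ and $\dot\bsigma^n+\mathbf{C}_1^T\bg^n$ and the stabilization term $\alpha\dt^2\mathbf{C}_1^T\mathbf{C}_1\ddot\bpsi^n$), and then replays the three-part field-energy argument with a modified energy $\widetilde H^{n+1/2}$, the CFL/$\alpha\ge1$ lower bound, discrete Gronwall for the interior forcing $\dot{\mathbf f}-\mathbf{D}^T\bg$, and the generating-function energy method with Parseval on $|\zeta|=\rho$ for the boundary perturbations. The one caveat, which you yourself correctly flag as bookkeeping, is that the differenced system is not literally an instance of the first-order perturbed scheme (integer versus half-integer grids, and boundary data $\mathbf{C}_1^T(\mathbf{D}\bu^n-\mathbf{C}_1\bpsi^n)$ in place of $\mathbf{C}_1^T\bv^{n+1/2}$), so the paper redoes the telescoping identities directly rather than substituting, noting along the way the BDF2 identity $(\partial_t^\dt)^{-1}\dot\bvarphi^{n} = \tfrac23 \sum_{j=0}^n 3^{-(n-j)} \bvarphi^{j+1/2}$ so that the positivity term controls the undifferenced densities.
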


\begin{proof} (a)
We use a reformulation of the method. Like in the passage from the first-order formulation to the second-order formulation in the temporally continuous case,
we eliminate the variables $\bv$ in the equation. This gives us
$$
\bu^{n+1} - 2\bu^n + \bu^{n-1} = -\dt^2 \mathbf{D}^T(\mathbf{D}\bu^n-  \mathbf{C}_1\bpsi^n) - \dt^2 \mathbf{C}_0 \dot\bvarphi^n -
\dt^2 \mathbf{D}^T\bg^n +\dt^2  \mathbf{\dot f}^n.
$$
%with $\dot\bvarphi^n=(\bvarphi^{n+1/2}-\bvarphi^{n-1/2})/\dt$ and
%$\dot{\mathbf{f}}^n=(\mathbf{f}^{n+1/2} - \mathbf{f}^{n-1/2})/\dt$.
 Differencing the boundary equation yields, with $\dot{\bar\bpsi}^{n}=(\bar\bpsi^{n+1/2}-\bar\bpsi^{n-1/2})/\dt=(\bpsi^{n+1}-\bpsi^{n-1})/(2\dt)$ and
$\dot{\bar \bu}^{n}=(\bu^{n+1}-\bu^{n-1})/(2\dt)$, 
\begin{eqnarray*}
\biggl[  \mathbf{B}(\partial_t^\dt)  \begin{pmatrix}
   \dot\bvarphi\\ \dot{\bar\bpsi}
  \end{pmatrix}\biggr]^{n}
&=& 
\begin{pmatrix}
  \mathbf{C}_0^T \dot{\bar \bu}^{n} \\ \mathbf{C}_1^T (\mathbf{D}\bu^n-\mathbf{C}_1\bpsi^n)-\alpha\dt^2\mathbf{C}_1^T\mathbf{C}_1\ddot{\bpsi}^n
\end{pmatrix} 
+ 
\begin{pmatrix}
  \dot\brho^{n} \\ \mathbf{C}_1^T\bg^n+\dot\bsigma^{n} 
\end{pmatrix} 
\end{eqnarray*}
with $\ddot{\bpsi}^n=(\bpsi^{n+1}-2\bpsi^n+\bpsi^{n-1})/\dt^2$, and $\dot\brho^n=(\brho^{n+1/2}-\brho^{n-1/2})/\dt$ and
$\dot\bsigma^n=(\bsigma^{n+1/2}-\bsigma^{n-1/2})/\dt$.
% 
% We can rewrite the discrete second-order equation as a one-step method by introducing the velocities in standard leapfrog manner as
% \begin{eqnarray*}
% \dot \bu^{n+1/2} &=& \dot \bu^n -\half\dt \mathbf{D}^T(\mathbf{D}\bu^n-\mathbf{C}_1\bpsi^n) - \half\dt \mathbf{C}_0\dot\bvarphi^n 
% +\half\dt \dot{\mathbf{f}}^n - \half\dt \mathbf{D}^T\bg^n
% \\
% \bu^{n+1} &=& \bu^n +\dt \,\dot \bu^{n+1/2} 
% \\
% \dot \bu^{n+1} &=& \dot \bu^{n+1/2} -\half\dt \mathbf{D}^T(\mathbf{D}\bu^{n+1}-\mathbf{C}_1\bpsi^{n+1}) - \half\dt \mathbf{C}_0\dot\bvarphi^{n+1} 
% +\half\dt \dot{\mathbf{f}}^{n+1} 
% \\
% &&%\hspace{8cm}
% - \half\dt \mathbf{D}^T\bg^{n+1}.
% \end{eqnarray*}
We note that 
$$
\half(\dot \bu^{n+1/2} + \dot \bu^{n-1/2})=(\bar \bu^{n+1/2}-\bar \bu^{n-1/2})/\dt=(\bu^{n+1}-\bu^{n-1})/(2\dt)
$$
and hence
$
\bar {\dot \bu}^n = \dot{\bar \bu}^n .
$
We take the inner product with $\bar {\dot \bu}^n$ in the interior equation,  and with
$\half\dt(\dot\bvarphi^{n},\dot{\bar\bpsi}^{n})$ in the boundary equation. We note
\begin{eqnarray*}
(\dot{\bar\bu}^n, \ddot\bu^n) &=&
\frac1{2\dt}(\dot\bu^{n+1/2} + \dot\bu^{n-1/2},\dot\bu^{n+1/2} - \dot\bu^{n-1/2})
\\
&=&
\frac1{2\dt} \bigl( |\dot\bu^{n+1/2}|^2 - |\dot\bu^{n-1/2}|^2 \bigr),
\end{eqnarray*}
\begin{eqnarray*}
&&(\mathbf{D}\dot{\bar\bu}^n - \mathbf{C}_1\dot{\bar\bpsi}^n, \mathbf{D}{\bu}^n - \mathbf{C}_1{\bpsi}^n) 
\\
&&= \frac1\dt (\mathbf{D}{\bu}^{n+1} - \mathbf{C}_1{\bpsi}^{n+1},\mathbf{D}{\bu}^n 
- \mathbf{C}_1{\bpsi}^n)
\\
&&\qquad  - \frac1\dt (\mathbf{D}{\bu}^{n} - \mathbf{C}_1{\bpsi}^{n},\mathbf{D}{\bu}^{n-1}- \mathbf{C}_1{\bpsi}^{n-1}),
\end{eqnarray*}
and
$$
\langle \dot{\bar\bpsi}^n, \mathbf{C}_1^T\mathbf{C}_1 \ddot\bpsi^n\rangle =
\frac1\dt \bigl( |\mathbf{C}_1\dot\bpsi^{n+1/2}|^2 - |\mathbf{C}_1\dot\bpsi^{n-1/2}|^2).
$$
Summing all up and setting
$$
\widetilde H^{n+1/2} = \tfrac12 |\dot\bu^{n+1/2}|^2 + 
(\mathbf{D}{\bu}^{n+1} - \mathbf{C}_1{\bpsi}^{n+1},
\mathbf{D}{\bu}^n - \mathbf{C}_1{\bpsi}^n)  +
\alpha \dt^2  |\mathbf{C}_1\dot\bpsi^{n+1/2}|^2
$$
we obtain
\begin{eqnarray*}
&&\widetilde H^{n+1/2} - \widetilde H^{n-1/2} 
+\dt
 \left\langle 
  \begin{pmatrix}
   \dot\bvarphi^{n}\\ \dot{\bar\bpsi}^{n}
  \end{pmatrix},
 \biggl[  \mathbf{B}(\partial_t^\dt)  
 \begin{pmatrix}
   \dot\bvarphi\\ \dot{\bar\bpsi}
     \end{pmatrix}
  \biggr]^{n}
\right\rangle
\\
&&= \dt (\bar {\dot \bu}^n,\dot{\mathbf{f}}^n- \mathbf{D}^T\bg^n)  +
\dt \langle \dot\bvarphi^{n},\dot\brho^{n}\rangle + 
\dt \langle \dot{\bar\bpsi}^{n} ,  \mathbf{C}_1^T\bg^{n+1} +\dot\bsigma^{n}\rangle .
\end{eqnarray*}
Under the CFL condition (\ref{cfl}) we estimate, according to the formula $ab=\tfrac14(a+b)^2-\tfrac14(a-b)^2$, 
\begin{eqnarray*}
&&
(\mathbf{D}{\bu}^{n+1} - \mathbf{C}_1{\bpsi}^{n+1},
\mathbf{D}{\bu}^n - \mathbf{C}_1{\bpsi}^n) 
\\
&& = |\mathbf{D}{\bar\bu}^{n+1/2} - \mathbf{C}_1{\bar\bpsi}^{n+1/2}|^2 -
\frac{\dt^2}4 |\mathbf{D}{\dot\bu}^{n+1/2} - \mathbf{C}_1{\dot\bpsi}^{n+1/2}|^2
\\
&& \ge |\mathbf{D}{\bar\bu}^{n+1/2} - \mathbf{C}_1{\bar\bpsi}^{n+1/2}|^2 -
\tfrac13 |{\dot\bu}^{n+1/2}|^2 - \dt^2 |\mathbf{C}_1{\dot\bpsi}^{n+1/2}|^2,
\end{eqnarray*}
so that
\begin{equation}\label{H-mod}
\widetilde H^{n+1/2}  \ge \tfrac1{6} |{\dot\bu}^{n+1/2}|^2 + |\mathbf{D}{\bar\bu}^{n+1/2} - \mathbf{C}_1{\bar\bpsi}^{n+1/2}|^2 +(\alpha-1) \dt^2 |\mathbf{C}_1{\dot\bpsi}^{n+1/2}|^2.
\end{equation}
Note that the term with $\dot\bpsi$ is non-negative for $\alpha\ge1$.

We sum from $n = 0$ to $m$ and note that by the positivity property of $\mathbf{B}(s)$ from Lemma 3.1 and by Lemma 2.3, for $m\dt\le T$,
\begin{eqnarray*}
&&\dt\sum_{n=0}^m
\left\langle 
  \begin{pmatrix}
  \dot \bvarphi^{n}\\ \dot{\bar\bpsi}^{n}
  \end{pmatrix},
 \biggl[  \mathbf{B}(\partial_t^\dt)  
 \begin{pmatrix}
  \dot \bvarphi\\ \dot{\bar\bpsi}
  \end{pmatrix}
  \biggr]^{n}
\right\rangle
\\
&&\qquad
\ge \frac \beta{2eT} 
\dt \sum_{n=0}^{m} \Bigl( |(\partial_t^\dt)^{-1}\dot\bvarphi^{n}|^2 + |(\partial_t^\dt)^{-1}\dot{\bar\bpsi}^{n}|^2 \Bigr) .
\end{eqnarray*}
Here we note that with the BDF2 method, for which $\delta(\zeta)=\tfrac32(1-\zeta)(1-\zeta/3)$,
$$
(\partial_t^\dt)^{-1}\dot\bvarphi^{n} = \tfrac23 \sum_{j=0}^n 3^{-(n-j)} \sum_{k=0}^j \dot \bvarphi^{k} = \tfrac23 \sum_{j=0}^n 3^{-(n-j)} \bvarphi^{j+1/2}.
$$
Hence,
\begin{eqnarray} \label{H-mod-est}
&&\widetilde H^{m+1/2} - \widetilde H^{1/2} +
 \frac \beta{2eT} 
\dt \sum_{n=0}^{m} \Bigl( |\bvarphi^{n+1/2}|^2 + |{\bar\bpsi}^{n+1/2}|^2 \Bigr) 
\\ \nonumber
&&\quad
\le \dt\sum_{n=0}^{m} \Big( (\bar {\dot \bu}^n,\dot{\mathbf{f}}^n- \mathbf{D}^T\bg^n) 
+\langle \dot\bvarphi^{n},\dot\brho^{n}\rangle + 
\langle \dot{\bar\bpsi}^{n}, \mathbf{C}_1^T\bg^{n} +\dot\bsigma^{n} \rangle 
\Big).
\end{eqnarray}

(b) If we first assume that all terms $\dot\brho^n$ and $\mathbf{C}_1^T\bg^{n} +\dot\bsigma^{n}$ are zero, then using the Cauchy-Schwarz inequality and Young's inequality,
and finally the discrete Gronwall inequality, we obtain the estimate at $t=n\dt$,
\[
\widetilde H^{n+1/2} \leq e \left(\widetilde H^{1/2}+ \frac t2 \, \dt\sum_{j=0}^n |{\dot{\mathbf{f}}}^{j}-\mathbf{D}^T\bg^j|^2  \right).
\]

(c) By linearity it remains to study the case where $\bu^0=0$, $\bu^{1}=0$, and all ${\dot{\mathbf{f}}}^{n}-\mathbf{D}^T\bg^n=0$. As in part (c) of the previous proof, we use the energy technique on the transformed equation. The generating power series satisfy the equations
\begin{eqnarray*}
&&\frac{\zeta^{-1}-2+\zeta}{\dt^2} \,\widehat\bu = - \mathbf{D}^T(\mathbf{D}\widehat \bu -
\mathbf{C}_1\widehat\bpsi) - \mathbf{C}_0 \widehat{\dot\bvarphi} 
\\[2mm]
&&\mathbf{B}(s) 
\begin{pmatrix}
\widehat{\dot\bvarphi} \\ \widehat {\dot{\bar\bpsi}}
\end{pmatrix}
= 
\begin{pmatrix}
 \mathbf{C}_0 ^T \widehat {\dot{\bar\bu}} \\
 \mathbf{C}_1 ^T (\mathbf{D}\widehat \bu -
\mathbf{C}_1\widehat\bpsi) - \alpha \dt^2  \mathbf{C}_1 ^T \mathbf{C}_1 \frac{\zeta^{-1}-2+\zeta}{\dt^2}\,\widehat\bpsi)
\end{pmatrix}
+ 
\begin{pmatrix}
\widehat {\dot\brho} \\ \widehat {\dot\bsigma} +  \mathbf{C}_1^T\widehat{\dot\bg}
\end{pmatrix},
\end{eqnarray*}
where $s=\delta(\zeta)/h$ and
$$
\widehat {\dot{\bar\bu}}  = \frac{\zeta^{-1}-\zeta}{2\dt}\, \widehat\bu, \qquad
\widehat {\dot{\bar\bpsi}}  = \frac{\zeta^{-1}-\zeta}{2\dt}\, \widehat\bpsi.
$$
We take the inner product with $\widehat {\dot{\bar\bu}} $ in the interior equation and with $\begin{pmatrix}
\widehat{\dot\bvarphi} \\ \widehat {\dot{\bar\bpsi}}
\end{pmatrix}$
in the boundary equation, sum up and take the real part. This gives
\begin{eqnarray*}
&&\frac2\dt \, \Re\frac{\zeta^{-1}-2+\zeta}{\zeta^{-1}-\zeta} \bigl|\widehat {\dot{\bar\bu}}\bigr|^2  
+ \Re \left\langle 
\begin{pmatrix}
\widehat{\dot\bvarphi} \\ \widehat {\dot{\bar\bpsi}}
\end{pmatrix},\mathbf{B}(s) 
\begin{pmatrix}
\widehat{\dot\bvarphi} \\ \widehat {\dot{\bar\bpsi}}
\end{pmatrix}
\right\rangle
\\
&&+ \ \frac{1}{2\dt} \,\Re (\zeta^{-1}-\zeta) \, |\mathbf{D}\widehat \bu -
\mathbf{C}_1\widehat\bpsi|^2 
+\alpha\dt^2\, \frac2\dt \, \Re\frac{\zeta^{-1}-2+\zeta}{\zeta^{-1}-\zeta} |\mathbf{C}_1\widehat\bpsi|^2 
\\
&&= \langle \widehat{\dot\bvarphi} , \widehat{\dot\brho} \rangle +
 \langle\widehat {\dot{\bar\bpsi}}, \widehat{\dot\bsigma}+  \mathbf{C}_1^T\widehat{\dot\bg} \rangle.
\end{eqnarray*}
For $|\zeta|<1$ we have 
$$
 \Re\frac{\zeta^{-1}-2+\zeta}{\zeta^{-1}-\zeta} >0, \qquad
 \Re (\zeta^{-1}-\zeta) >0,
$$
%and for $|\zeta|\le e^{-\mu\dt}$,
%$$
%\frac2\dt \, \Re\frac{\zeta^{-1}-2+\zeta}{\zeta^{-1}-\zeta} \ge \mu + O(\dt), \qquad
%\frac{1}{2\dt} \,\Re (\zeta^{-1}-\zeta) \ge \mu + O(\dt).
%$$
and hence we conclude, by the same arguments as at the end of part (c) of the proof of Lemma~\ref{lemma:discrete-E}, that
\begin{eqnarray*}
&& \dt\sum_{j=0}^n \left( |\dot\bvarphi^{j}|^2 +  |\dot{\bar\bpsi}^{j}|^2 \right)
\\
&& \leq 2(e/\beta)^2 \max(t^2,t^6) \dt \sum_{j=0}^n \left(|(\partial_t^\dt)^2\dot\brho^{j}|^2 + |(\partial_t^\dt)^2(\dot\bsigma+ \mathbf{C}_1^T{\dot\bg})^j|^2 \right) .
\end{eqnarray*}
We  return to the bound (\ref{H-mod-est}), where we use a Cauchy-Schwarz inequality on the right-hand side and insert the above bound to obtain
$$
\widetilde H^{n+1/2} \leq  C \max(t^2,t^6) \dt\sum_{j=0}^n \left(|(\partial_t^\dt)^2\dot\brho^{j}|^2 + |(\partial_t^\dt)^2(\dot\bsigma+ \mathbf{C}_1^T{\dot\bg})^j|^2 \right) .
$$
(d) By linearity, combining the estimates of (b) and (c) and recalling (\ref{H-mod}) gives the stated result.
\qed
\end{proof}

\subsection{Boundary functions}

\begin{lemma}\label{lemma:discrete-Phi}
The boundary functions are bounded at $t=n\dt$ by
\[
\begin{split}  
&\sum_{j=0}^n (|\bvarphi^{j+1/2}|^2 + |\bar\bpsi^{j+1/2}|^2)  
\\ &\leq C\max(t^2,t^6)  \sum_{j=0}^{n-1} \left(|\dot{\mathbf{f}}^j|^2+|\dot \bg^{j+1/2}|^2+ |\ddot\brho^{j+1/2}|^2 + |\ddot\bsigma^{j+1/2}|^2 \right) ,
\end{split}
\]  
where $C$ is independent of $h$, $\dt$, and $n$.
\end{lemma}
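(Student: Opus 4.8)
The plan is to mirror the three-case decomposition of the semidiscrete Lemma~\ref{lemma:semiPhi}, using linearity of the perturbed scheme. Writing the solution as the sum of the responses to the three source groups $(\brho,\bsigma)$, $\mathbf f$, and $\bg$ taken separately (with all other data and all initial values set to zero), it suffices to bound each response, since the asserted inequality is quadratic and splitting costs only a fixed constant. Two of the three responses can essentially be read off the energy estimates already proved, while the $\bg$-response requires a fresh argument and is the main obstacle.

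For the $(\brho,\bsigma)$-response (with $\mathbf f=\bg=0$ and vanishing initial values, so $\widetilde E^0=0$) the required bound is already contained in \eqref{eq:E_L2_phi-disc}, the only discrepancy being that it is phrased with the convolution-quadrature second difference $(\partial_t^\dt)^2$ rather than the ordinary difference quotient. These two operators are comparable in the relevant weighted norm: on the circle $|\zeta|=\rho=e^{-\mu\dt}$ the BDF2 symbol satisfies $|\delta(\zeta)|^2\le C|1-\zeta|^2$, whereas the ordinary second difference has symbol $\zeta^{-1}(1-\zeta)^2$, so by Parseval $\sum\rho^{2n}|(\partial_t^\dt)^2\brho^{n}|^2\le C\sum\rho^{2n}|\ddot\brho^{n}|^2$; since $\rho^{2n}$ stays between $e^{-2}$ and $1$ for $n\dt\le t$, the weighted and unweighted sums agree up to a constant, and likewise for $\bsigma$, converting \eqref{eq:E_L2_phi-disc} into the claimed form. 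For the $\mathbf f$-response (with $\brho=\bsigma=0$, $\bg=0$, zero initial values) I would use the discrete mechanical energy inequality \eqref{H-mod-est}, whose left-hand side already carries $\tfrac{\beta}{2eT}\dt\sum_n(|\bvarphi^{n+1/2}|^2+|\bar\bpsi^{n+1/2}|^2)$. With these sources the right-hand side reduces to $\dt\sum_n(\bar{\dot\bu}^n,\dot{\mathbf f}^n)$; estimating it by Cauchy--Schwarz against the lower bound \eqref{H-mod} for $\widetilde H$, controlling $\widetilde H$ by $\dt\sum|\dot{\mathbf f}|^2$ via the discrete Gronwall inequality, and absorbing the factor $T\simeq t$ yields this contribution with a $t^2$, hence $\le\max(t^2,t^6)$, prefactor.

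The genuinely new ingredient is the $\bg$-response, which I would treat by the generating-power-series energy method of part~(c) of Lemma~\ref{lemma:discrete-E}, but now with $\widehat\bg$ appearing as a source in the discrete momentum equation $\tfrac{1-\zeta}{\dt}\widehat\bv=\mathbf D\widehat\bu-\mathbf C_1\widehat\bpsi+\widehat\bg$. Taking the same test functions, summing, and passing to real parts, the CFL condition \eqref{cfl} and $\alpha\ge1$ from \eqref{alpha} again dispose of the leapfrog cross-term and the stabilization term exactly as in that proof (so the positivity of $\mathbf B(\partial_t^\dt)$ via Lemma~\ref{lemma:cq_Herglotz} and Lemma~\ref{lemma:calderon_pos} is available). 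The decisive point is that the velocity-energy term $\tfrac{1-|\zeta|^2}{2\dt}|\widehat\bv|^2$, nonnegative for $|\zeta|<1$, can be used to absorb the $\bg$ cross-term, in close analogy with the cancellation of $\Re s\,|\widehat\bv|^2$ in case~(iii) of the semidiscrete proof of Lemma~\ref{lemma:semiPhi}. On $|\zeta|=\rho$ one has $1-\rho^2\simeq 2\dt/t$, so this absorption costs only a factor $\simeq t$, and what survives on the left is the Calderon form. Applying Lemma~\ref{lemma:calderon_pos} and inverting its lower bound $\beta\min(1,|s|^2)\Re s/|s|^2$ for $\Re s\ge1/t$ produces the factor $\max(t^2,t^6)$ and bounds $|\widehat\bvarphi|^2+|\widehat{\bar\bpsi}|^2$ by $\max(1,|s|^2)|\widehat\bg|^2$; writing this as $|\widehat\bg|^2+|\widehat{\dot\bg}|^2$ and using $\bg^0=0$ (a discrete Poincar\'e inequality, $\sum|\bg^j|^2\le Ct^2\sum|\dot\bg^{j+1/2}|^2$) to absorb the lower-order term, Parseval on $|\zeta|=\rho$ and causality give the $\bg$-contribution in terms of $\sum|\dot\bg^{j+1/2}|^2$. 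Summing the three responses then completes the proof.

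I expect the $\bg$-case to be the hard part: the challenge is to carry $\widehat\bg$ through the generating-function identity without destroying the sign structure supplied by \eqref{cfl} and \eqref{alpha}, and to match the near-boundary behavior $1-\rho^2\simeq 2\dt/t$ to the Calderon lower bound so that exactly the advertised polynomial powers of $t$ appear. By contrast, the $(\brho,\bsigma)$- and $\mathbf f$-responses are bookkeeping built on estimates already established, the only subtlety there being the equivalence of the convolution-quadrature and ordinary difference quotients in the weighted sums.
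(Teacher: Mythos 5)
Your proposal is correct and follows essentially the same route as the paper, whose proof is exactly this three-case decomposition: the $(\brho,\bsigma)$-response via \eqref{eq:E_L2_phi-disc}, the $\mathbf{f}$-response via the discrete mechanical energy inequality \eqref{H-mod-est}, and the $\bg$-response by extending the generating-power-series energy argument of part (c) of Lemma~\ref{lemma:discrete-E} in analogy with case (iii) of Lemma~\ref{lemma:semiPhi}. The details you add beyond the paper's sketch --- the symbol comparison between $(\partial_t^\dt)^2$ and the ordinary second difference quotient on $|\zeta|=\rho$, and the absorption of the $\widehat\bg$ cross-terms by the nonnegative term $\tfrac{1-|\zeta|^2}{2\dt}|\widehat\bv|^2$ with $1-\rho^2\simeq 2\dt/t$ playing the role of $\Re s$ --- are sound and correspond to what the paper leaves implicit.
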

\begin{proof}
  We separate the three cases (i) $\bu^0 = 0$, $\bv^0 = 0$, $\mathbf{f}^{j+1/2} = 0$ and $\bg^{j} = 0$, (ii) $\brho^{j+1/2} = 0$, $\bsigma^{j+1/2} = 0$ and $\bg^j = 0$, and (iii) all inhomogeneities and initial values vanish except for  arbitrary $\bg^j$.
In the case (i) an estimate of the temporal $\ell_2$ norms of $\bvarphi^{j+1/2}$ and $\bar\bpsi^{j+1/2}$ is given in \eqref{eq:E_L2_phi-disc}. In the case (ii) such an estimate follows from \eqref{H-mod-est}. The case (iii) is proved by an extension of part (c) of the proof of Lemma~\ref{lemma:discrete-E}, similar to the proof of 
Lemma~\ref{lemma:semiPhi}.
\qed\end{proof}

\section{Error bound for the full discretization}
We proceed in the same way as for the semidiscretization in 
Section~\ref{sec:err-semi}.
We first rewrite the fully discrete equations in their variational formulation:
find $u_h^n \in U_h$, $v_h^n, v_h^{n+1/2} \in V_h$, $\varphi_h^{n+1/2} \in \Phi_h$, $\psi_h^n \in \Psi_h$ (and $\bar\psi_h^{n+1/2}=\half(\psi_h^{n+1}+\psi_h^n)$ and
$\dot\psi_h^{n+1/2}=\frac1{\dt}(\psi_h^{n+1}-\psi_h^n)$)
such that
\begin{equation}\label{eq:disc-var}
\begin{aligned} 
&  \tfrac2\dt( v_h^{n+1/2}- v_h^n, z_h) = -\half(u_h^n,\nabla\cdot z_h) + \half (\nabla u_h^n,z_h) +\half\langle\psi_h^n,\gamma z_h\cdot n\rangle_\Gamma
\\[1mm]
&  \tfrac1\dt( u_h^{n+1}-u_h^n, w_h) = -\half(v_h^{n+1/2},\nabla w_h) + \half (\nabla\cdot v_h^{n+1/2},w_h) -\half\langle\varphi_h^{n+1/2},\gamma w_h\rangle_\Gamma
\\
&\qquad\qquad\qquad\qquad \qquad\qquad\qquad \qquad +(f(t_{n+1/2}),w_h)
\\[1mm]
&  \tfrac2\dt(v_h^{n+1}-v_h^{n+1/2}, z_h) = -\half(u_h^{n+1},\nabla\cdot z_h) + \half (\nabla u_h^{n+1},z_h) +\half\langle\psi_h^{n+1},\gamma z_h\cdot n\rangle_\Gamma
\\[2mm]
& \left\langle
  \begin{pmatrix}
    \xi_h\\\eta_h
  \end{pmatrix}, \Bigl[ {B} (\partial_t^\dt) 
  \begin{pmatrix}
    \varphi_h \\ \bar\psi_h
  \end{pmatrix}\Bigr]^{n+1/2}
\right\rangle_\Gamma = \half\langle\xi_h,\gamma u_h^{n+1/2}\rangle_\Gamma - \half \langle \gamma v_h^{n+1/2}\cdot n, \eta_h\rangle_\Gamma 
\\
&\qquad\qquad\qquad\qquad \qquad\qquad\qquad \qquad
-\alpha\dt^2 \langle \dot\psi_h^{n+1/2}, \eta_h\rangle_\Gamma 
\end{aligned}
\end{equation}
for all $w_h \in U_h$, $z_h \in V_h$, $\xi_h \in \Phi_h$, and  $\eta_h\in \Psi_h$.

We consider the defects obtained when we insert the projected exact solution $(P_hu,P_hv,P_h\varphi,P_h\psi)$ into the variational formulation of the fully discrete scheme. Instead of $P_hv(t_{n+1/2})$ we insert  $P_h \widetilde v^{n+1/2}$ with $\widetilde v^{n+1/2} = v(t_{n+1/2})- \frac18 \dt^2 \ddot v(t_{n+1/2})$, chosen such that $\widetilde v^{n+1/2} = v(t_n)+\half\dt \,\dot v(t_n)+ O(\dt^3)$ and 
$v(t_{n+1})=  \widetilde v^{n+1/2} + \half\dt \,\dot v(t_{n+1})+O(\dt^3)$. The arising defects in (\ref{eq:disc-var}) then consist of terms that are already present in the defects of the semidiscretization and additional terms that are $O(\dt^2)$ in the case of a temporally smooth solution. For the interior equations this is obtained from a simple Taylor expansion, for the boundary equations it follows from the known error bound (\ref{cq-conv}) of convolution quadrature \cite{Lub94}. We thus have $O(h+\dt^2)$ consistency errors in the appropriate norms. With the discrete stability lemmas from Section~\ref{sec:stab-disc} we then obtain, by the same arguments that we used for the semidiscrete case, the following error bound for the full discretization.

\begin{theorem}
Assume that the initial values and the inhomogeneity of the wave equation  \eqref{eq:wave} have their support in $\Omega$. Let the initial values for the semi-discretization be chosen as $u_h(0) = P_h u(0)$ and $v_h(0) = P_h v (0)$, where $P_h$ denotes the $L_2(\Omega)$-orthogonal projection onto the finite element spaces. If the solution of the wave equation is sufficiently smooth, then the error of the FEM \& BEM  \& leapfrog \& convolution quadrature full discretization \eqref{eq:disc-var}, under the CFL condition $(\ref{cfl})$ and with the stability parameter satisfying $(\ref{alpha})$, is bounded at $t=n\dt$ by
\[
\begin{split}  
&\|u_h^n-u(t)\|_{L_2(\Omega)} + \|v_h^n-v(t)\|_{L_2(\Omega)^3}\\ &+
\left(\dt\sum_{j=0}^{n-1} \|\varphi_h^{j+1/2}-\varphi(t_{j+1/2})\|^2_{H^{-1/2}(\Gamma)}+ \|\bar\psi_h^{j+1/2}-\psi(t_{j+1/2})\|^2_{H^{1/2}(\Gamma)}\right)^{1/2}
\\
&
\leq C(t) (h+\dt^2),
\end{split}
\]
where the constant $C(t)$ grows at most polynomially with $t$.
\qed
\end{theorem}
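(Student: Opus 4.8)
The plan is to follow the proof of the semidiscrete error bound in Section~\ref{sec:err-semi} in its logical structure, substituting the discrete stability lemmas of Section~\ref{sec:stab-disc} for their semidiscrete counterparts and enlarging the consistency errors to include the temporal discretization. I would insert the projected exact solution into the fully discrete variational formulation \eqref{eq:disc-var} and collect the resulting defects, which then serve as the data $\mathbf f^{n+1/2}$, $\bg^n$, $\brho^{n+1/2}$, $\bsigma^{n+1/2}$ in the perturbed scheme of Section~\ref{subsec:stab-discrete-setting}. The errors $u_h^n-P_hu(t_n)$, $v_h^{n+1/2}-P_h v(t_{n+1/2})$, $\varphi_h^{n+1/2}-P_h\varphi(t_{n+1/2})$ and $\bar\psi_h^{n+1/2}-P_h\psi(t_{n+1/2})$ are then bounded in the natural norms by Lemma~\ref{lemma:discrete-E} and Lemma~\ref{lemma:discrete-Phi}, exactly as Lemma~\ref{lemma:semiE} and Lemma~\ref{lemma:semiPhi} were used before, and the triangle inequality against the projection error finishes the argument.

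The key new ingredient is the staggered time grid, on which $v$ lives at half-integer nodes. Following the prescription in the paragraph preceding the theorem, I would insert $P_h\widetilde v^{n+1/2}$ with $\widetilde v^{n+1/2}=v(t_{n+1/2})-\tfrac18\dt^2\ddot v(t_{n+1/2})$ rather than $P_hv(t_{n+1/2})$. Taylor expansion gives $\widetilde v^{n+1/2}=v(t_n)+\half\dt\,\dot v(t_n)+O(\dt^3)$ and $v(t_{n+1})=\widetilde v^{n+1/2}+\half\dt\,\dot v(t_{n+1})+O(\dt^3)$, so each of the two half-step $\bv$-updates in \eqref{eq:disc-var} is consistent to $O(\dt^3)$ per step, and the midpoint $\bu$-update likewise has an $O(\dt^3)$ local defect by symmetry of the leapfrog stencil. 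These additional interior temporal defects contribute $O(\dt^2)$ to $\mathbf f$ and $\bg$ after the standard division by $\dt$.

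For the boundary equation the temporal error is the convolution quadrature error rather than a Taylor remainder. Here I would appeal to the bound \eqref{cq-conv}: the densities $\varphi,\psi$ inherit the smoothness of the solution, and because the initial data and inhomogeneity are supported in $\Omega$ the densities and several of their time derivatives vanish at $t=0$, so replacing $B(\partial_t)$ by $B(\partial_t^\dt)$ produces an $O(\dt^2)$ defect uniformly for $t=n\dt\le T$. Adding the purely spatial defects, bounded by $O(h)$ through Lemma~\ref{lemma:PhH1} and Lemma~\ref{lemma:Bdefect} precisely as in the semidiscrete proof, yields $O(h+\dt^2)$ for $\brho^{n+1/2}$ and $\bsigma^{n+1/2}$.

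The main obstacle I expect is not the order of consistency itself but matching the defects to the hypotheses of the discrete stability lemmas. Those lemmas control the energy not through the defects directly but through their second discrete time-differences $(\partial_t^\dt)^2\brho^{n+1/2}$ and $(\partial_t^\dt)^2\bsigma^{n+1/2}$, and they require vanishing discrete initial values. I would therefore verify that the projections and the $\widetilde v$-shift produce defects with the correct initial behaviour, and, more delicately, that \eqref{cq-conv} survives two discrete time-differences: since the discrete convolution commutes with $(\partial_t^\dt)^2$, the doubly-differenced boundary defect is the convolution quadrature error applied to $\partial_t^2\varphi$ and $\partial_t^2\psi$, which are still smooth and vanish sufficiently at $t=0$, so \eqref{cq-conv} again gives $O(\dt^2)$. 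Once this bookkeeping is complete, linearity and the superposition used in part~(d) of the stability proofs deliver the stated $O(h+\dt^2)$ bound.
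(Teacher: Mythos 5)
Your proposal is correct and follows essentially the same route as the paper: inserting the projected exact solution (with the shifted $P_h\widetilde v^{n+1/2}$, $\widetilde v^{n+1/2}=v(t_{n+1/2})-\tfrac18\dt^2\ddot v(t_{n+1/2})$) into \eqref{eq:disc-var}, bounding the interior temporal defects by Taylor expansion and the boundary defect via \eqref{cq-conv}, combining these with the $O(h)$ spatial defects from Lemma~\ref{lemma:PhH1} and Lemma~\ref{lemma:Bdefect}, and then invoking the discrete stability lemmas of Section~\ref{sec:stab-disc} exactly as the semidiscrete lemmas were used in Section~\ref{sec:err-semi}. In fact your observation that $(\partial_t^\dt)^2$ commutes with the discrete convolution, so that \eqref{cq-conv} applies to $\partial_t^2\varphi$, $\partial_t^2\psi$ when matching the stability lemmas' hypotheses, makes explicit a bookkeeping step the paper leaves implicit.
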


\def\cprime{$'$}

\end{document}